\documentclass[10pt,reqno,a4paper]{amsart}

\usepackage{amsmath,amsrefs,amssymb,tensor, eufrak, dsfont}
\usepackage{graphicx}
\usepackage{xcolor}
\usepackage{tikz-cd}
\usepackage{bigints}

\usepackage[colorlinks=true,linkcolor=red,citecolor=blue,urlcolor=teal,bookmarks=false,hypertexnames=true,pdfborder={0 0 0}]{hyperref}
\usepackage{cleveref}
\usepackage{mathrsfs}

\numberwithin{equation}{section}

\DeclareMathOperator{\Scal}{S}
\DeclareMathOperator{\Ricci}{Ric}

\DeclareMathOperator{\Weyl}{W}

\DeclareMathOperator{\Sym}{Sym}
\DeclareMathOperator{\Vol}{Vol}

\DeclareMathOperator{\bigO}{O}
\DeclareMathOperator{\smallo}{o}

\newcommand{\R}{\mathbb{R}}
\renewcommand{\S}{\mathbb{S}}
\newcommand{\N}{\mathbb{N}}
\newcommand{\Z}{\mathbb{Z}}

\renewcommand{\[}{\left[}
\renewcommand{\]}{\right]}
\renewcommand{\(}{\left(}
\renewcommand{\)}{\right)}

\newtheorem{theorem}{Theorem}[section]
\newtheorem{proposition}{Proposition}[section]
\newtheorem{lemma}{Lemma}[section]

\newtheorem{remark}{Remark}[section]
\newtheorem{corollary}{Corollary}[section]

\begin{document}

\title[Barycenter technique for GJMS]{Barycenter technique for the higher order $Q$-curvature equation}

\author{Saikat Mazumdar}
\address{Saikat Mazumdar, Department of Mathematics, Indian Institute of Technology, Bombay, Powai, Mumbai, Maharashtra 400076, India}
\email{saikat.mazumdar@iitb.ac.in, saikat@math.iitb.ac.in}

\author{Cheikh Birahim Ndiaye}
\address{Cheikh Birahim Ndiaye, Department of Mathematics, Howard University, Annex 3, Graduate School of Arts and Sciences DC 20059 Washington, USA}
\email{cheikh.ndiaye@howard.edu}

\date{March 4, 2026}

\begin{abstract}
Let $k\ge1$ be an integer, and $\(M,g\)$ be a smooth, closed Riemannian manifold of dimension $2k+1\le n\le 2k+3$, or $(M,g) $ be locally conformally flat of dimension $n\ge 2k+1$.  Applying the Bahri–Coron barycenter method, we show the existence of a conformal metric with constant  $Q$-curvature of order $2k$, or equivalently, the existence of a positive solution for the $2k$-th order $Q$-curvature equation involving the GJMS operator $P_{g}$. We only assume a natural positivity preserving condition on $P_{g}$ and do not suppose any condition on the sign of the {\emph{mass}} of $P_{g}$. In particular, we obtain existence without using a positive mass theorem. 
\end{abstract}

\maketitle

\section{Introduction}
Let $k\geq 1$ be a positive integer and $(M,g)$ be a closed smooth Riemannian manifold of dimension $n \geq 2k+1$. We consider the conformally covariant GJMS operator, denoted by $P_{g}$, introduced by Graham, Jenne, Mason and Sparling in the seminal work \cite{GJMS}. $P_{g}$ is an elliptic self-adjoint differential operator with the leading term $\Delta^k_{g}$, where $\Delta_{g}= -\text{div}_g(\nabla \cdot)$ is the Laplace--Beltrami operator. The operator $P_{g}$ is conformally covariant, that is,  defining $\widehat{g}:= u^{\frac{4}{n-2k}}g$ for $u \in C^\infty(M),\,u>0$, we have the relation
\begin{equation} \label{conf.inv.Pg} 
P_{\widehat g} (f)= u^{- \frac{n+2k}{n-2k}} P_g(u f)~\hbox{ for all } f \in C^\infty(M).
\end{equation}
The construction of $P_{g}$ is based on the Fefferman and Graham ambient metric \cite{FefGra1, FefGra2}. When $k=1$, $P_g$ is the  conformal laplacian $\Delta_g + \frac{n-2}{4(n-1)} \Scal_g,$ with $\Scal_g$ denoting the scalar curvature of $(M,g)$. For $k=2$, $P_g$ is the Paneitz-Branson operator \cite{Branson1, Paneitz}.  Also see Graham and Zworski \cite{GZ} for a related construction via scattering theory. Explicit formulas for $P_{g}$ are known only in some low-order cases, see Branson \cite{Branson1,Branson2,Branson3}, Paneitz \cite{Paneitz}, Gover and Peterson \cite{GovPete}, and Juhl \cite{JuhlGJMS}. In general, universal algebraic formulas expressing $P_{g}$ in terms of second-order building blocks were obtained by Juhl \cite{JuhlGJMS} (also see Fefferman and Graham \cite{FefGra3}). However, in special cases such as Einstein manifolds, $P_{g}$ factorizes as a product of second-order operators of the form $\Delta_{g}+\gamma_{i,n}\Scal_{g}$, with $\gamma_{i,n}=\frac{(n-2i)(n+2i-2)}{4n(n-1)}$, $1\le i\le k$ (see Gover \cite{Gov} and Fefferman-Graham \cite{FefGra2}). Recently, Case and Malchiodi \cite{CaseMal} obtained factorizations for special Einstein product manifolds.
\medskip

A scalar valued curvature quantity called the $Q$-curvature of order $2k$, denoted by $Q_{g}$, is associated with the GJMS operator $P_{g}$ and is defined via the zeroth-order terms of $P_{g}$ as $Q_{g}:=\frac{2}{n-2k}P_{g}(1)$.  When $k=1$, $Q_{g}=\Scal_{g}$ (up to a positive constant), and for $k=2$, $Q_{g}$ is the Branson $Q$-curvature (introduced in Branson and \O rsted \cite{BransonOrs} and generalized by Branson \cite{Branson3}). For more details regarding the $Q$-curvature and its significance, we refer to Juhl \cite{JuhlBook}, Case and Gover \cite{CaseGov} and the references therein.
\medskip

In this paper, we consider the question of the existence of a conformal metric with constant positive $Q_{g}$ curvature on $(M,g)$. By the conformal transformation rule \eqref{conf.inv.Pg}, this is equivalent to showing the existence of a smooth positive solution for the $2k$-th order $Q$-curvature equation:
\begin{align}\label{eq:one}
P_{g}u=\lambda\, u^{\frac{n+2k}{n-2k}}~\hbox{ in } M, \hbox{ for some constant }\lambda>0. 
\end{align}
Thus if $u\in C^{\infty}(M)$, $u>0$ satisfies eq. $\eqref{eq:one}$, then $Q_{u^{\frac{4}{n-2k}}g}\equiv\, \text{constant}>0$.  We define the corresponding Yamabe invariant as
\begin{align}\label{Yamabe invariant}
Y_{k}(M,g):=\inf_{\widehat{g}\in\[g\]}\frac{\displaystyle\int_{M}Q_{\widehat{g}}\,dv_{\widehat{g}}}{\Vol_{\,\widehat{g}}\(M\)^{\frac{n-2k}{n}}}=\inf_{\substack{u\in C^\infty\(M\),\\u>0\text{ in }M}}\frac{\displaystyle\int_{M}uP_{g}u\,dv_g}{\displaystyle\(\int_{M}u^{2^*_k}\,dv_g\)^{\frac{n-2k}{n}}}.
\end{align}
Here $\[g\]$ denotes the conformal class of $g$ and $2^{*}_{k}:=2n/\(n-2k\)$ is the critical Sobolev exponent. It follows that $Y_{k}(M,g)$ is conformally invariant. 
\medskip

To obtain positive solutions, we will assume throughout the following {\emph{positivity condition}}:
\begin{align}\label{positivity}
\text{$Y_{k}(M,g)>0$ and the Green's function $G_{g}$ of $P_{g}$ is positive in $M$.}
\end{align}
The Green's function of $P_g$ is then the unique function $G_g \in C^\infty(M \times M \backslash \{x=y\})$ satisfying $P_g G_g(x, \cdot) = \delta_x$ for all $x \in M$, and $G_g$ is positive if $G_g(x,y) >0$ for all $x \neq y$.  Assumption \eqref{positivity} implies that $P_g$ is coercive and satisfies the maximum principle, that is,  if $u\geq 0$ satisfies $P_{g}u\ge0$ in $M$ then either $u>0$ or $u\equiv 0$ in $M$. Condition \eqref{positivity} is satisfied for the standard round sphere $\S^{n}$, its quotients and closed Einstein manifolds with positive scalar curvature. Recently, Case and Malchiodi in \cite{CaseMal} have obtained examples of special Einstein product manifolds for which the  Green's function of $P_{g}$ is positive. For the case $k=2$, the assumptions $Y_{1}(M,g)>0$ and $Q_g \geq 0$ on $M$, $Q_g\not\equiv 0$, imply that $\text{Ker}(P_{g})=\{0\}$ and the Green's function $G_{g}$ is positive, as shown  in Hang and Yang \cite{HangYang1, HangYang2} and Gursky and Malchiodi \cite{GurMal}. Also see Gursky, Hang, and Lin \cite{GurHangLin}. For the general case of $k\ge1$, $Y_{k}(M,g)>0$ implies  $\text{Ker}(P_{g})=\{0\}$, but geometric assumptions on $(M,g)$ that would ensure condition \eqref{positivity} remain unknown, to the best of our knowledge. 
\medskip

For all $1\le k<n/2$, the existence of positive smooth solutions for eq. $\eqref{eq:one}$ was established in Mazumdar and V\'etois \cite{MazVetois} assuming the positivity condition \eqref{positivity}. In addition, a positive mass type result was assumed in \cite{MazVetois} to obtain existence when  $2k+1\le n\le 2k+3$ or $(M,g) $ is locally conformally flat, similar to $k=1,2$.  One may think of the mass at $\xi\in M$ as the constant term in the expansion of  $G_{g}(\xi, \cdot)$ in conformal normal coordinates around $\xi$ (see \eqref{expan.green2}). Proving a positive mass type theorem for general $k\geq 1$, in the spirit of the seminal work of Schoen and Yau \cite{SchoenYau}, remains elusive, and this was recently established for the case $k=2$ in  Avalos, Laurain and Lira \cite{ALL}, and Gong, Kim and Wei \cite{GongKimWei}. Earlier results in this direction, for the case $k=2$, were obtained in Humbert and Raulot \cite{HumRau}, and \cite{GurMal,HangYang1}.  In this paper, we show that a positive mass type result is not necessary to obtain the existence of a solution to eq. \eqref{eq:one}. Our main result is the following theorem.
\begin{theorem}\label{thm:main}
Let $k\ge1$ be an integer, and $\(M,g\)$ be a smooth, closed Riemannian manifold of dimension $2k+1\le n\le 2k+3$ or $(M,g) $ be locally conformally flat. Assume that the $2k$th order GJMS operator $P_{g}$ satisfies the positivity condition \eqref{positivity}. Then  eq. \eqref{eq:one} has a smooth positive solution for some $\lambda>0$, or equivalently,  there exists a metric conformal to $g$ with constant positive $Q$-curvature of order $2k$.
\end{theorem}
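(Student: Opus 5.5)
We argue by contradiction, following the Bahri--Coron barycenter method. Suppose \eqref{eq:one} has no positive solution. Work with the functional
\[
J(u)=\frac{\int_M uP_gu\,dv_g}{\(\int_M |u|^{2^*_k}dv_g\)^{\frac{n-2k}{n}}}
\]
on the positive cone $\Sigma^+=\{u\in H^k(M):u\ge0,\ u\not\equiv0\}$. By \eqref{positivity}, $P_g$ is coercive and satisfies the maximum principle. Hence any critical point of $J$ in $\Sigma^+$ is, after scaling, a smooth positive solution. Under the contradiction hypothesis we use the standard profile decomposition (Struwe-type) for Palais--Smale sequences of $J$ in the positive cone. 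Since there is no solution, every PS sequence at level $c$ decomposes, up to $o(1)$ in $H^k$, into a sum $\sum_{i=1}^p\alpha_i\,\delta_{a_i,\lambda_i}$ of positive bubbles. The bubbles are $\delta_{a,\lambda}$, built from the Green's function $G_g(a,\cdot)$ glued to the standard extremal $c_{n,k}\(\lambda/(1+\lambda^2d(a,x)^2)\)^{(n-2k)/2}$ in conformal normal coordinates. Consequently the PS levels are exactly $p^{2k/n}Y_k(\S^n)$ with $p\ge1$. Away from these levels, a deformation lemma lets us retract sublevels. Near the levels, the neighbourhoods $V(p,\varepsilon)$ of sums of $p$ bubbles carry the usual parametrization by $(\alpha_i,a_i,\lambda_i)$, and we need an expansion of $J$ there.

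The key analytic step is a precise energy expansion, carried out in conformal normal coordinates using the Green's function expansion \eqref{expan.green2}. For a single bubble in the low-dimensional or locally conformally flat case, $J(\delta_{a,\lambda})=Y_k(\S^n)\(1-c\,m(a)\lambda^{2k-n}+o(\lambda^{2k-n})\)$, where $m(a)$ is the mass. For sums of well-separated bubbles with comparable $\lambda_i$, the interaction terms $\varepsilon_{ij}\sim G_g(a_i,a_j)(\lambda_i\lambda_j)^{-(n-2k)/2}$ enter with a favourable (negative) sign. The point of the barycenter method is that the \emph{sign} of $m$ is irrelevant. The construction only needs that interaction and mass terms are of lower order, so the deformed sublevels are controlled by the bubble energies. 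Concretely, one shows that the ``flow at infinity'' keeps $J<(p+1)^{2k/n}Y_k(\S^n)$ on the relevant sets, using only $|m|\le C$.

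The topological argument is the standard one. Let $B_p(M)$ denote the formal barycenters $\{\sum_{i\le p}\alpha_i\delta_{a_i}\}$. By induction on $p$ we construct maps
\[
f_p:B_p(M)\to \{J\le (p+1)^{2k/n}Y_k(\S^n)-\eta\},\qquad f_p\(\textstyle\sum\alpha_i\delta_{a_i}\)=\frac{\sum\alpha_i\delta_{a_i,\lambda}}{\|\cdot\|}
\]
with $\lambda$ large. Thanks to the expansion, $J$ stays below $(\sum\alpha_i^{2^*_k\ldots})$ levels: the degenerate strata $\alpha_i\to0$ are precisely where the map drops to the lower sublevel. The map $f_p$ restricted to $B_{p-1}(M)$ lands in the lower sublevel, which under the no-solution hypothesis retracts onto $V(p-1,\varepsilon)$ and hence projects back to $B_{p-1}(M)$. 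Using the nonzero $\Z_2$-homology class $\omega_p\in H_{p(n+1)-1}(B_p(M),B_{p-1}(M))$ (fundamental class of $M$ cap), one pushes the cycle by the deformation into a neighbourhood of $V(p,\varepsilon)$ or lower levels. This produces nontrivial classes for every $p$. On the other hand, the functional is bounded on the contractible set $f_p(B_p(M))$ coned off to a fixed function; for $p$ large the cone lies below $(p+1)^{2k/n}Y_k(\S^n)$, forcing the class to vanish, which is a contradiction.

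The main obstacle is the expansion step: controlling the interaction and self-interaction terms of $P_g$ on sums of bubbles for general $k$ without an explicit formula for $P_g$. We handle it by defining the bubbles through $G_g$, so that $P_g\delta_{a,\lambda}$ is explicit up to an error. That error is $O(\lambda^{2k-n})$ exactly when $n\le 2k+3$, or identically zero beyond the mass term in the locally conformally flat case. This is precisely where the dimension restriction enters, since higher-order Weyl terms would otherwise dominate the interaction terms.
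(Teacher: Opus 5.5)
Your architecture is the paper's: the contradiction hypothesis, Schoen--Brendle-type bubbles glued to $G_g$, the Struwe decomposition, the deformation lemma, and the barycenter classes $\Omega_d$ with the transfer relation giving $\mathscr{F}_d(\mu)_*(\Omega_d)\neq 0$ for every $d$. The gap is in the step that is supposed to produce the contradiction.

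You assert that for $p$ large the cone over $f_p(B_p(M))$ lies below $(p+1)^{2k/n}Y_k(\S^n)$. Nothing in the proposal explains why large $p$ helps. With a vertex $u_0$ independent of $\lambda$, the claim is in fact false:
\begin{itemize}
\item As $\lambda\to\infty$ the normalized bubbles converge weakly to $0$ and decouple from $u_0$: the cross terms are $O(\lambda^{(2k-n)/2})$, and Brezis--Lieb splits the $L^{2^*_k}$ norm.
\item Take $x=\sum_i p^{-1}\delta_{a_i}$ with distinct $a_i$. By H\"older, the maximum of $\mathcal{J}_{g,k}$ on the segment from $f_p(x)$ to $u_0$ tends to $\big(p\,Y_k(\S^n)^{n/2k}+\mathcal{J}_{g,k}(u_0)^{n/2k}\big)^{2k/n}$.
\item This exceeds $(p+1)^{2k/n}Y_k(\S^n)$, because under your hypothesis $\inf\mathcal{J}_{g,k}=Y_k(\S^n)$ is not attained, so $\mathcal{J}_{g,k}(u_0)>Y_k(\S^n)$.
\end{itemize}
If you instead cone to a bubble $V_{a_0,\mu}$ at the same scale, the cone lies in $f_{p+1}(B_{p+1}(M))$. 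You are then back to needing exactly the estimate you have not supplied.

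That missing estimate is the core of the paper (Proposition \ref{main:energy esti}, Corollary \ref{energy loss}). With all scales equal to $\mu$, the mass and the interactions are of the same order: $\varepsilon_{ij}=\mathfrak{c}_{n,k}^{\frac{n-2k}{2}}b_{n,k}^{-1}G_g(\xi_i,\xi_j)\mu^{n-2k}(1+o(1))\ge c\,\min_{x\neq y}G_g\,\mu^{n-2k}$. There are $d(d-1)$ such terms against $d$ mass terms. After normalization this gives \eqref{energy_sum3_main}: $\mathcal{J}_{g,k}(\sum a_iV_{\xi_i,\mu})\le d^{2k/n}Y_k(\S^n)\big[1-(\mathfrak{m}_g+(d-1)\mathfrak{b}_{n,k}\min G_g)\mu^{n-2k}\big]$ for balanced weights and small interactions. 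Large interactions or unbalanced weights push the energy strictly below $d^{2k/n}Y_k(\S^n)$ on their own.

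Since $\min_{x\ne y}G_g\ge C^{-1}\mathrm{diam}(M)^{2k-n}>0$, for $d\ge d_\star$ the bracket is $<1$ whatever the sign of $\mathfrak{m}_g$. This is where positivity of the Green's function is used, not just the maximum principle. Hence $\mathscr{F}_d(\mu)(\mathscr{B}_d(M))\subset\mathscr{W}^{d-1}$, and $\mathscr{F}_d(\mu)_*(\Omega_d)=0$ in $\mathscr{H}_*(\mathscr{W}^d,\mathscr{W}^{d-1})$ directly, with no cone needed.

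This quadratic-versus-linear count in $d$ is exactly why the sign of the mass is irrelevant. Your claim that the method ``only needs interaction and mass terms of lower order'' misses it: with merely lower-order corrections there would be no contradiction at all.
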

\begin{remark}
Note that we do not assume any condition on the sign of the {\emph{mass}} of $P_{g}$. The solutions or the metrics obtained by Theorem \ref{thm:main} are not necessarily minimizing. 
\end{remark}
\medskip


Approaching the problem in the spirit of the method introduced by Yamabe \cite{Yamabe}, Aubin \cite{Aubin} and Schoen \cite{Schoen} for the case of the Yamabe problem, one attempts to show the existence of minimisers of the corresponding energy functional. Now, a non-compact minimizing sequence develops a spherical bubble at a point on the manifold $M$ and thus a strict inequality $Y_{k}(M,g)<Y_{k}(\mathbb{S}^{n})$ will imply the existence of a solution of eq. \eqref{eq:one}. To show the strict inequality, one derives an energy expansion for a suitable family of test functions depending on a parameter. For $n\le2k+3$ or $(M,g)$ locally conformally flat,  the test functions are chosen to be the Sobolev extremals localized on the manifold glued to the Green's function with the parameter.  The positivity of the mass then ensures the strict inequality, thus proving existence. This program was carried out for general $k\ge1$ in \cite{MazVetois}, but the positivity of mass still remains out of reach, to the best of our knowledge. To show existence without a positive mass type result, we use the novel technique introduced and pioneered by Bahri \cite{Bahri}, Bahri and Coron \cite{BahriCoron}, and more recently, used by Mayer and Ndiaye \cite{MayNdi1} to resolve the remaining cases of the Cherrier-Escobar problem. Also see Bahri-Brezis \cite{BahriBrezis} for the case of Yamabe-type problems.  We remark that when $(M,g)$   is locally conformally flat, existence for eq. \eqref{eq:one} was first obtained  by Qing and Raske \cite{QingRas}, under the assumption that the Poincar\'e exponent of the holonomy representation of the fundamental group $\pi_{1}(M)$ is less than $\frac{n-2k}{2}$. Another existence result was given by Robert \cite{RobertGJMS} for the prescribed $Q$ curvature problem. 
\medskip

For the case $n=2k$,  which involves an exponential non-linearity, the existence of constant Q-curvature metrics in a given conformal class was shown using min-max techniques, Morse theoretic and algebraic topological arguments in Djadli and Malchiodi \cite{DjaMal} and Ndiaye \cite{Ndiaye1} for the case $k=2$, and Ndiaye \cite{Ndiaye2, Ndiaye3}  for all $n=2k$. Also see Brendle \cite{BreQ} (for a proof via the gradient flow), Baird, Fardoun
and Regbaoui \cite{BFR}, and Li, Li and Liu \cite{LLL}. A similar existence result for the fractional Yamabe problem on the conformal infinity of a Poincar\'e-Einstein manifold was obtained using the barycenter technique of Bahri-Coron in Ndiaye, Sire and Sun \cite{NSS}, and  Mayer and Ndiaye  \cite{MayNdi2}. Another problem where the method of Bahri-Coron was successfully applied is the global case of the CR Yamabe problem, as shown by Gamara \cite{Gamara} and Gamara and Yacoub \cite{GamYac}. 
\medskip

We now briefly discuss the steps of the proof of Theorem \ref{thm:main}.  We focus on the main ideas and will be flexible with our notations.  Proceeding by contradiction, we suppose that the energy functional 
$$\big\{ u\ge0 \text{ a.e.} : \|u\|_{L^{2^{*}_{k}}}=1\big\}\ni u\longmapsto\int_{M}u\,P_{g}u\,dv_{g}=:\mathcal{J}_{g,k}(u)$$ has no critical points (see Section \ref{sec1}). Let $\mathscr{W}_{c}:=\{u: \mathcal{J}_{g,k}(u)\leq c\}$ denote energy sublevels. Consider the barycenter $\mathscr{B}_{d}(M)$ of order $d\in\N$, which is the set of the convex combinations of Dirac deltas $\delta_{\xi_{i}}$ with $\xi_{i}\in M$ for  $1\le i\le d$. Via the global bubbles $V_{\xi,\mu}$,  $\xi\in M$, $\mu>0$ sufficiently small, defined in Section \ref{sec:Bubbles} we map the barycenter $\mathscr{B}_{d}(M)$ to the energy sublevel $\mathscr{W}_{c^{\star}_{d}}$, where $c^{\star}_{d}:=(d+1)^{2k/n}Y_{k}(\S^{n})$, for all $d\in\N$. By a Struwe-type decomposition, Palais-Smale sequences for $\mathcal{J}_{g,k}$ can be written as a sum of finitely many bubbles $\sum V_{\xi_{i},\mu_{i}}$ satisfying a certain configuration, plus a small error. The quantization of the energy levels implies that for every $d\ge 1$, the sublevel  $\mathscr{W}_{c^{\star}_{d}}$ deformation retracts to $\mathscr{W}_{c^{\star}_{d-1}+\varepsilon}$ for any $\varepsilon>0$ small. Moreover, we can project  $\mathscr{W}_{c^{\star}_{d-1}+\varepsilon}\setminus \mathscr{W}_{c^{\star}_{d-1}}$ back to $\mathscr{B}_{d}(M)$.

As $M$ is a closed manifold, it has a non-trivial topology ($\mathbb{Z}_{2}$ homology), and using algebraic topological arguments, we have the existence of a non-trivial homology of $\mathscr{B}_{d}(M)$. For $d=1$, this is the orientation class of $M$. Using the  cone structure, we thus obtain that the mapping between the pairs 
$$\mathscr{F}_{d}(\mu):\,\big(\mathscr{B}_{d}(M),\mathscr{B}_{d-1}(M)\big)\longrightarrow \big(\mathscr{W}_{c^{\star}_{d}}, \mathscr{W}_{c^{\star}_{d-1}}\big)$$ is always homologically nontrivial for all $d\in\N$. This procedure is described in Section \ref{sec:BahriCoron}. We remark that this part of the argument is generally applicable to a wide class of problems of a similar nature and does not use the exact equation, the dimension restriction, or even the geometry of the manifold. The only ingredients are: the deformation lemma, the energy quantization phenomenon, and the existence of a selection map, which in our setting is a consequence of the Struwe-type bubble tree decomposition. 

Coming back to the bubbles, in Section \ref{sec:Bubbles} we obtain a precise estimate on the error: $P_{g}V_{\xi,\mu}-V_{\xi,\mu}^{2^{*}_{k}}$ using the Juhl's expansion of the GJMS operator $P_{g}$. We then use this to obtain interaction estimates between bubbles in Section \ref{sec:Interaction}.  In Section \ref{sec:Energy}, using the interaction estimates we control the energy of a sum of bubbles and show that there exists $d_{\star}$ large such that 
$$\mathcal{J}_{g,k}\big(\sum_{i=1}^{d} V_{\xi_{i},\mu_{i}}\big)<d^{\,2k/n}\,Y_{k}(\S^{n})~\hbox{ for } d\ge d_{\star}.$$ 
This is the core of the paper, and these estimates are of independent interest. \emph{The interactions between the bubbles push the energy of the sum below the sum of the energies of the bubbles} and we need globally defined bubbles for the interaction phenomena to take effect.  But then this means that the map $\mathscr{F}_{d}(\mu)$ is homologically trivial for  $d\ge d_{\star}$, a contradiction.  The proof of Theorem \ref{thm:main} is completed in Section \ref{sec:BahriCoron}. 
The Green's function of $P_{g}$ plays a crucial role in the above analysis, and we exploit the nonlinearity to obtain the above strict inequality. In this method, we exploit the non-compactness to obtain a topological contradiction.  Note that gaining existence from the above inequality is reminiscent of the Aubin-type criterion for $d=1$ case. 
\medskip

The compactness problem for eq.\eqref{eq:one} for general $k\ge1$ was recently studied in Mazumdar and Premoselli \cite{MazPrem}. In general, one cannot eliminate non-compactness without additional assumptions, such as the positivity of mass.  We refer to Robert \cite{RobPoly1, RobPoly2} for a blow-up analysis for GJMS-type operators, and the forthcoming work of Carletti and Premoselli \cite{CarPrem} for compactness results for finite-energy solutions on open sets in $\R^{n}$. The problem of extremizing eigenvalues in a conformal class for GJMS operators was recently studied by Humbert, Petrides and Premoselli \cite{HumPetPre}. 
\medskip

The paper is organised as follows. In Section \ref{sec1}, we recall the variational set-up and some established results needed for our proof. In Section \ref{sec:Bubbles}, we define our bubbles and obtain the crucial error estimate. We calculate the interaction estimates in  Section \ref{sec:Interaction}, which is then used to estimate the energy of the sum of bubbles in Section \ref{sec:Energy}. In Section \ref{sec:BahriCoron}, we present the topological argument of Bahri and Coron and complete the proof of Theorem \ref{thm:main}. In Appendix \ref{A2} we give the expansion of the GJMS operator obtained in \cite{MazPrem}. 

\section{Notations and preliminaries}\label{sec1}

\subsection*{Notations:}
Throughout this paper, $C$ will denote a generic positive constant that depends on $n,k$ and possibly $(M,g)$.  $\text{A} \lesssim \text{B}$ will equivalently denote $\text{A}= \bigO(\text{B})$. The relation $\text{A}\approx \text{B}$ means that the quantities $\text{A}, \text{B}$ satisfies both $\text{A} \lesssim \text{B}$ and $\text{B}\lesssim \text{A}$. Additional notations will be introduced as and when we use them first. 
\medskip

In this section, we collect the analytical ingredients needed in the proof of our theorem. The results listed here are either well established now, or have already appeared elsewhere. 
\smallskip

\noindent
$H^{k}(M)$ will denote the $k$-th order Sobolev space with $L^{2}$ inner product, and can equivalently  be defined as the closure of $C^{\infty}(M)$ with respect to the norm:
\begin{align*}
\|u\|_{H^{k}(M)}^{2}:=\sum\limits_{\ell=0}^{k}\int_{M}|\Delta_{g}^{\ell/2}u|^{2}\, dv_{g}.
\end{align*}
Here $\Delta^{\ell/2}u:=\nabla\Delta^{(\ell-1)/2}u$ when $\ell$ is odd. By the Sobolev embedding theorem $H^{k}(M)\hookrightarrow L^{2^{*}_{k}}(M)$, where $2^{*}_{k}:=2n/\(n-2k\)$ is the critical Sobolev exponent, and the following energy functional is well defined.  
\begin{align}\label{energy}
\mathcal{J}_{g,k}(u):=\frac{\displaystyle{\int_{M}u\,P_{g}u\,dv_{g}}}{\(\displaystyle{\int_{M}|u|^{\,2^{*}_{k}}\,dv_{g}}\)^{2/2^{*}_{k}}}\quad \hbox{ for } u\in H^{k}(M)\setminus\{0\}.
\end{align}
It follows that eq. \eqref{eq:one} is variational and, up to multiplying by a constant,  non-trivial solutions in $H^{k}(M)$ correspond to critical points of $\mathcal{J}_{g}$. To find positive solutions, we will use the following formulation: Consider the Banach submanifold of $H^{k}(M)$
\begin{align}\label{constraint}
\mathcal{M}_{k}^{+}:=\Big\{u\in H^{k}(M): u\ge0 \text{ a.e.} \text{ and } \int_{M}u^{\,2^{*}_{k}}\,dv_{g}=1\Big\}.
\end{align}
The functional $ \mathcal{J}_{g,k}(u)$ is $C^{2}$ on $\mathcal{M}_{k}^{+}$ and our positivity condition \eqref{positivity} ensures that any critical point $u$ of $\mathcal{J}_{g,k}(u)$ on $\mathcal{M}_{k}^{+}$ is a smooth positive solution of eq. \eqref{eq:one} for some $\lambda>0$. 
\begin{proposition}\label{crit_pts}
Let $k\ge1$ be an integer, and $\(M,g\)$ be a smooth, closed Riemannian manifold of dimension $n\ge 2k+1$. Assume that the $2k$th order GJMS operator $P_{g}$ satisfies the positivity condition \eqref{positivity}. Then any critical point $u$ of $\mathcal{J}_{g,k}(u)$ on $\mathcal{M}_{k}^{+}$ is a smooth positive solution of eq. \eqref{eq:one}  for some $\lambda>0$. 
\end{proposition}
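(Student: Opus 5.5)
The proof has three steps: derive the Euler--Lagrange equation on the constrained set, get positivity from the Green's function, and then bootstrap regularity. The main difficulty is that $\mathcal{M}_{k}^{+}$ carries the sign constraint $u\ge0$. Being a critical point therefore only gives a variational inequality, unless we can show that the constraint is inactive.

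\textbf{Step 1: derive the equation.} A critical point $u$ of $\mathcal{J}_{g,k}$ on $\mathcal{M}_{k}^{+}$ means that the derivative of $\mathcal{J}_{g,k}$, restricted to the tangent space of the $L^{2^*_k}$-sphere, vanishes. I would take this to hold for all $\varphi\in H^{k}(M)$ with $\int_M u^{2^*_k-1}\varphi\,dv_g=0$. If the intended notion only allows admissible directions, I would first treat variations $u+t\varphi$ with $\varphi$ supported where $u>0$, and then handle the rest by the argument in Step 2. Lagrange multipliers, with the normalization $\int_M u^{2^*_k}\,dv_g=1$, give weakly
\begin{align*}
P_g u=\lambda\, u^{2^*_k-1}\quad\text{in } M,\qquad \lambda=\int_M u\,P_g u\,dv_g=\mathcal{J}_{g,k}(u)\ge Y_k(M,g)>0 .
\end{align*}
The lower bound uses that $Y_k$ is also the infimum over nonnegative $H^k$ functions, by density.

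\textbf{Step 2: positivity via the Green's function.} Since $P_g$ is invertible, coercive by \eqref{positivity}, we can write the representation
\begin{align*}
u(x)=\lambda\int_M G_g(x,y)\,u(y)^{2^*_k-1}\,dv_g(y).
\end{align*}
To justify it, test the equation against $G_g(x,\cdot)$ after mollification, or argue by duality, since $P_g^{-1}$ is the integral operator with kernel $G_g$. We have $u\ge0$, $u\not\equiv0$ and $G_g>0$ off the diagonal, so $u(x)>0$ for every $x$ once $u$ is known to be continuous; before that, the identity holds almost everywhere. If Step 1 only yields a variational inequality $P_gu\ge\lambda u^{2^*_k-1}$, with equality on $\{u>0\}$, the same Green representation still gives $u\ge \lambda\,G_g*(u^{2^*_k-1})>0$. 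Then $\{u>0\}$ is all of $M$, all directions become admissible, and equality follows.

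\textbf{Step 3: regularity (the main obstacle).} A critical nonlinearity in $H^k$ does not bootstrap directly. I would use the Brezis--Kato/Trudinger-type argument adapted to higher order operators, as in Mazumdar \cite{MazVetois}-type work. Write $u^{2^*_k-1}=a\,u$ with $a=u^{2^*_k-2}\in L^{n/2k}(M)$. Split $a=a_1+a_2$ with $\|a_1\|_{L^{n/2k}}$ small and $a_2\in L^\infty$. Then $u=\lambda P_g^{-1}(a_1u)+\lambda P_g^{-1}(a_2u)$, and the operator $v\mapsto \lambda P_g^{-1}(a_1 v)$ is a contraction on $L^p$ for every $p$, by elliptic $L^p$ estimates for $P_g^{-1}$ and Hölder's inequality. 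Hence $u\in L^p$ for all $p<\infty$. Elliptic $W^{2k,p}$ estimates then give $u\in C^{2k-1,\alpha}$, and Schauder estimates, using that $u>0$ so $u^{2^*_k-1}$ is smooth in $u$, give $u\in C^\infty$. Finally, $u>0$ by Step 2 and $\lambda>0$, so $u$ solves \eqref{eq:one}.
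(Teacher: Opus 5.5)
Your proposal is correct and follows the standard route that the paper does not write out but delegates to \cite{Maz1} and \cite{AndKoRatWei}: the Euler--Lagrange equation with $\lambda=\mathcal{J}_{g,k}(u)\ge Y_k(M,g)>0$, strict positivity from the Green representation (this is the maximum principle the paper derives from \eqref{positivity}), and a Brezis--Kato/Trudinger-type $L^p$ bootstrap followed by Schauder estimates once $u$ is bounded below. One point in your fallback for the variational-inequality reading needs tightening: two-sided variations $u+t\varphi$ are admissible only for bounded (for instance smooth) $\varphi$, and only if $\operatorname{ess\,inf}_M u>0$; you get this lower bound from $u\ge\lambda\big(\inf_{x\ne y}G_g(x,y)\big)\int_M u^{2^*_k-1}\,dv_g$, and density of $C^\infty(M)$ in $H^k(M)$ then gives the weak equation.
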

For proofs and details, we refer the reader to Theorem $1$ and Theorem $3$ in \cite{Maz1}. Also see Andrade, K\"onig, Ratzkin and Wei \cite{AndKoRatWei} (Section 3), and \cite{Maz1} (Proposition 8.3 in Appendix A and Section 6). 
\medskip

We define the canonical bubble centred at $x=0$ and with height $=1$ as follows.
\begin{align}\label{bubble1}
B_{0}\(x\):={\big(1+\mathfrak{c}_{n,k}^{-1}\left|x\right|^2\big)^{\frac{2k-n}{2}}}~\hbox{ for } x\in\R^n, \hbox{ where }\mathfrak{c}_{n,k}=\[\prod \limits_{j=-k}^{k-1}\(n+2j\)\]^{1/k}.
\end{align}
Let $\Delta_0=-\sum_{i=1}^n \partial_i^2$  denote the Euclidean Laplacian. From the classification result of Wei-Xu \cite{WeiXu} it follows that, up to translations and rescalings, $B_{0}$ is the unique $C^{2k}(\R^{n})$ positive solution of
\begin{align}
\Delta_{0}^k\,B_{0}=B_{0}^{\,2^{*}_{k}-1}~\hbox{ in $\R^n$}.
\end{align}
The fundamental solution of $\Delta_0^k$ in $\R^n$ centered at $0$ is given by $G_0(x) = b_{n,k} |x|^{2k-n}$, where 
\begin{equation} \label{def:bnk}
 b_{n,k}^{-1} = 2^{k-1}\(k-1\)!\(n-2\)\(n-4\)\dotsm\(n-2k\)\omega_{n-1},
 \end{equation}
$\omega_{n-1}$ denotes the volume of the standard round sphere $\mathbb{S}^{n-1}$. Moreover $\displaystyle \mathfrak{c}_{n,k}^{\frac{n-2k}{2}} = b_{n,k} \int_{\R^n} B_{0}^{2^{*}_{k}-1}\, dx$ (for instance, see  Premoselli \cite{Prem13}).
\smallskip

Recall the best constant in the Sobolev inequality 
\begin{align}
\frac{1}{K(n,k)}:=\inf\limits_{\varphi\in C^{\infty}_{c}(\R^{n})\setminus\{0\}}\frac{\displaystyle{\int_{\R^{n}}\varphi\,\Delta^{k}_{0}\varphi\,dx}}{\(\displaystyle{\int_{\R^{n}}|\varphi|^{\,2^{*}_{k}}\,dv_{g}}\)^{2/2^{*}_{k}}}.
\end{align}
It follows from Lions \cite{Lions} and Swanson \cite{Swan} that the extremal functions for the above Sobolev inequality are given by constant multiples of translations and rescalings of  $B_{0}$. By conformal invariance of the GJMS operator, it then follows that
\begin{align}
&Y_{k}(\S^{n}):=\inf\limits_{\varphi\in C^{\infty}(\S^{n})}\frac{\displaystyle{\int_{\S^{n}}u\,P_{g_{\text{rd}}}u\,dv_{g_{\text{rd}}}}}{\(\displaystyle{\int_{\S^{n}}|u|^{\,2^{*}_{k}}\,dv_{g_{\text{rd}}}}\)^{2/2^{*}_{k}}}=\frac{1}{K(n,k)}=\(\, \int_{\R^{n}}B_{0}^{\,2^{*}_{k}}\, dx\)^{2k/n}.
\end{align}
Here $g_{\text{rd}}$ denotes the standard round metric on the $n$-dimensional sphere $\S^{n}$. The explicit expression of the best constant $K(n,k)$ in terms of the gamma functions can be found in \cite{Swan}.
\medskip

We next recall some geometric preliminaries. Let  $\Scal_{g}$ denote the scalar curvature, and let $\Ricci_{g}$, $\Weyl_{g}$ denote the Ricci and Weyl curvature tensors of $\(M,g\)$. Fixing $N>2$ large, by  Lee-Parker \cite{LeePar} (also see Cao \cite{Cao}, G\"{u}nther \cite {Gun}) there exists $\Lambda\in C^{\infty}(M\times M)$ such that, defining $\Lambda_{\xi}:=\Lambda(\xi,\cdot)$, we have for all $\xi \in M$
\begin{align}\label{conf.1}
\Lambda_{\xi}>0, ~\Lambda_{\xi}(\xi)=1~\hbox{and}~  \nabla \Lambda_{\xi}(\xi)=0,
\end{align}
and the conformal metrics $g_{\xi}:=\Lambda_{\xi}^{\frac{4}{n-2k}}g$ satisfies 
\begin{equation}\label{conf.2}
\det g_{\xi}\(x\)=1+\bigO(|x|^{N}) 
\end{equation}
around $0$ in  geodesic normal coordinates given by the the exponential map $\exp_{\xi}^{g_{\xi}}$ at $\xi$ with respect to the metric $g_{\xi}$. Moreover
\begin{align}\label{conf.3}
&\qquad\Scal_{g_{\xi}}\(\xi\)=0,~\nabla \Scal_{g_{\xi}}\(\xi\)=0,~\Ricci_{g_{\xi}}\(\xi\)=0, ~\Delta_{g_{\xi}} \Scal_{g_{\xi}}\(\xi\)=\frac{1}{6}\left|\Weyl_{g_{\xi}}\(\xi\)\right|^2,\notag\\
&\Sym\nabla\Ricci_{g_{\xi}}\(\xi\)=0,~\Sym\((\Ricci_{g_{\xi}})_{ab;cd}\(\xi\)+\frac{2}{9}(\Weyl_{g_{\xi}})_{eabf}\(\xi\)\tensor{\Weyl}{^e_{cd}^f}\(\xi\)\)=0.
\end{align}
Here  $\Sym$ denotes the symmetric part.  
\medskip

We will need a precise expansion of the GJMS operator for our arguments. Note that the leading order term in the expansion of $P_g$ is $\Delta_g^k$. Fixing $\xi\in M$, Cartan's expansion of $g$ gives us in geodesic normal coordinates,
\begin{align} \label{expansion.Pg0}
P_{\exp_\xi^*g}v (x)=&\,\Delta_{0}^{k}\,v(x) +\bigO \big (|x|^2 |\nabla^{2k} v(x)|_{g} \big ) \notag\\
&~+ \bigO \big( |x| |\nabla^{2k-1} v(x)|_{g} \big) + \bigO\Big(\sum_{\ell=0}^{2k-2}  |\nabla^{\ell} v(x)|_{g}\Big), ~\hbox { for }x\in \R^{n}
\end{align}
and the constants in the $\bigO(\cdot)$ terms are independent of $\xi, x$. See Robert \cite {RobPoly1} and \cite{MazPrem}. 
A more precise expansion of $P_{g}$ , in conformal normal coordinates, was recently obtained in \cite{MazPrem} (Proposition B.1). For clarity and completeness, we prove the expression in Appendix \ref{A2}.
\medskip

The Green's function $G_{g}$ of the GJMS operator $P_{g}$ will play a pivotal role in our analysis.  We will throughout assume the positivity condition \eqref{positivity}.  The Green's function of $P_g$ is the unique positive function $G_g \in C^\infty(M \times M \backslash \{x=y\})$ satisfying $P_g G_g(x, \cdot) = \delta_x$ for all $x \in M$.  The following bounds are satisfied for all $x \neq y\in M$ 
\begin{align}\label{bounds.Green}
\frac{1}{C} d_g(x,y)^{2k-n} \le G_{g}(x,y) \le C d_g(x,y)^{2k-n} \text{ and } \big| \nabla^{\ell} G_{g}(x,y) \big|_{g} \le C_\ell d_g(x,y)^{2k-\ell-n},
\end{align}
with constants $C$, $C_{\ell}$ depending only on $(M,g)$ and $n,k, \ell$. Furthermore for any $\xi \in M$, one has the expansion
\begin{align}\label{expan.green1}
G_{g}\big(\exp_{\xi}^{g}(x), \exp_{\xi}^{g}(y) \big) & = \frac{b_{n,k}}{|x-y|^{n-2k}} \Big( 1 + \bigO^{(2k-1)}(|x-y|) \Big),
\end{align}
where $b_{n,k}$ is as in \eqref{def:bnk} and the constants in the $\bigO(\cdot )$ term are independent of $x,y$ and $\xi$. The notation $f = \bigO^{(q)}(r^{p})$ denotes that $f$ satisfies $|\nabla^\ell f(x)|_g \lesssim r^{p-\ell}$ for all $0 \le \ell \le q$ and $x \neq 0$. For details and proof, see Theorem C.1 in \cite{RobPoly1} and Appendix C in \cite{MazPrem}. An expansion for the Green’s function $G_{g}$ of the GJMS operator $P_{g}$, for all $k$,  in conformal normal coordinates, involving the mass, was recently obtained in \cite{MazPrem} (Proposition C.2).  
\medskip

\noindent
Using the conformal invariance property \eqref{conf.inv.Pg} of $P_{g}$ it folllows that  for any $\xi \in M$ and for any $x \neq y $ in $M$, we have 
$$ G_{g_\xi}(x,y) = \Lambda_\xi(x)^{-1} \Lambda_\xi(y)^{-1} G_g(x,y),$$ 
where $g_{\xi} := \Lambda_\xi^{\frac{4}{n-2k}} g$ is the conformal metric satisfying \eqref{conf.1}, \eqref{conf.2} and  \eqref{conf.3}. For $2k+1 \le n \le 2k+3$ or $(M,g)$ locally conformally flat, one has the following expansion as $x \to 0$
\begin{align}\label{expan.green2}
G_{g_\xi}\big(\exp_{\xi}^{g_\xi}(x),\xi\big) & = \frac{b_{n,k}}{|x|^{n-2k}} + A_\xi + \bigO^{(2k-1)}(|x|), 
\end{align}
where $A_\xi \in \R$ and is called the \emph{mass} of the operator $P_{g_{\xi}}$ at the point $\xi$ ,  and as before the constant $b_{n,k}$ is given by  \eqref{def:bnk} and the constants in the $\bigO(\cdot )$ term are independent of $x,y$ and $\xi$. See Lee and Parker~\cite{LeePar} for $k=1$, Gursky and Malchiodi~\cite{GurMal}, Hang and Yang \cite{HangYang2} and Gong, Kim and Wei \cite{GongKimWei} for $k=2$, Chen-Hou \cite{ChenHou} for $k=3$ and Michel~\cite{Mic} for $k\ge2$. Note that the sign of the mass is conformally invariant. We say that $P_g$ has positive mass at every point if, in the above expansion, we have  $A_\xi >0$ for all $ \xi \in M$.
\medskip

\section{Bubbles and Error estimate}\label{sec:Bubbles}

\noindent
We let $\chi:[0,+\infty)\to[0,1]$ be a smooth cutoff function such that $\chi\equiv1$ in $\[0,1\]$ and $\chi\equiv0$ in $\[2,+\infty\)$. Fix $\xi\in M$, $0<\delta<1$ and consider the conformal metric $g_{\xi}:=\Lambda_{\xi}^{\frac{4}{n-2k}}g$ satisfying \eqref{conf.1}, \eqref{conf.2} and  \eqref{conf.3}. We define 
\begin{align}\label{bubble2}
&\widetilde{U}_{\xi,\mu}\(x\):=\chi_{\xi,\delta}(x)\,B_{\xi,\mu}(x), ~\hbox{where } \chi_{\xi,\delta}(x):=\chi\(\delta^{-1}d_{g_{\xi}}\(x,\xi\)\)\notag\\
&~\hbox{and } B_{\xi,\mu}(x):=\frac{\mu^{\frac{n-2k}{2}}}{(\mu^{2}+\mathfrak{c}^{-1}_{n,k}\,d_{g_{\xi}}(x,\xi)^{2})^{\frac{n-2k}{2}}}\quad\forall x\in M.
\end{align}
Here $d_{g_{\xi}}$ is the geodesic distance with respect to the metric $g_{\xi}$ and $\mathfrak{c}_{n,k}$ is given by \eqref{bubble1}. We next define 
\begin{align}\label{bubble3}
\widetilde{V}_{\xi,\mu}\(x\)=\widetilde{U}_{\xi,\mu}\(x\)+\mathfrak{c}^{\frac{n-2k}{2}}_{n,k}\,b^{-1}_{n,k}\,\mu^{\frac{n-2k}{2}} \(1-\chi_{\xi,\delta}(x)\)G_{g_{\xi}}(x,\xi),
\end{align}
where $b_{n,k}$ is given by \eqref{def:bnk}. We then define our bubbles centred at $\xi$ as:
\begin{align}\label{bubble3b}
V_{\xi,\mu}\(x\):=\Lambda_{\xi}(x)\widetilde{V}_{\xi,\mu}\(x\).
\end{align}
Note that our bubbles are $C^{\infty}$ functions and are the $k$-th order version of the Schoen-Brendle bubble (eq. (203) in Brendle \cite{Bre1}). Also see eq. (3.4) in \cite{MazVetois} for a similar bubble with the mass term added. Additional comments on our choice of bubbles are provided in Remark \ref{rem:expn1} in the context of the energy expansion.  
\medskip

\noindent
We first obtain the following crucial error estimate, and one can compare this directly with the Yamabe case ($k=1$) in \cite{Bre1} (Corollary B.2.), and the Paneitz case ($k=2$) in Gong, Lee and Wei \cite{GongLeeWei} (Section 5).
\begin{proposition}\label{prop:error}
Let $k\ge1$ be an integer, and $\(M,g\)$ be a smooth, closed Riemannian manifold of dimension $2k+1\le n\le 2k+3$ or $(M,g)$ be locally conformally flat. Assume that the GJMS operator $P_{g}$ satisfies \eqref{positivity}.
Fix $\xi\in M$, $0<\delta<1$ and choose $\mu$ sufficiently small as follows:
\begin{enumerate}
\item[(i)]
$0<\mu<\big(\mathfrak{c}_{n,k}^{-1/2}\,\delta\big)^{3}$ when $2k+1\le n\le 2k+3$. 
\item[(ii)]
$0<\mu<\big(\mathfrak{c}_{n,k}^{-1/2}\,\delta\big)^{n/2}$ when $(M,g) $ is locally conformally flat.
\end{enumerate}
The constant $\mathfrak{c}_{n,k}$ is defined in \eqref{bubble1}. Then the following estimate holds  for all $x\in M$
\begin{align}\label{error1}
&\big|\big(P_{g}V_{\xi,\mu}-V_{\xi,\mu}^{2^{*}_{k}-1}\big)(x)\big|\lesssim\(\frac{\mu^{\frac{n-2k}{2}}}{\(\mu+d_{g_{\xi}}(x,\xi)\)^{n-4}}\)\mathds{1}_{\{d_{g_{\xi}}(x,\xi)\leq2\delta\}}~+\notag\\
&~\mu^{\frac{n-2k}{2}}\delta^{-2k}\mathds{1}_{\{\delta\leq d_{g_{\xi}}(x,\xi)\leq 2\delta\}}~+\(\frac{\mu^{\frac{n+2k}{2}}}{\(\mu+d_{g_{\xi}}(x,\xi)\)^{n+2k}}\)\mathds{1}_{\{\delta\leq d_{g_{\xi}}(x,\xi)\}},
\end{align}
with the constants in $\lesssim$ depending only on $n,k$ and $(M,g)$.
\end{proposition}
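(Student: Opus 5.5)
The plan is to reduce everything to the metric $g_\xi$ using conformal covariance. By \eqref{conf.inv.Pg} with $u=\Lambda_\xi$, we have $P_g V_{\xi,\mu}=P_g(\Lambda_\xi \widetilde V_{\xi,\mu})=\Lambda_\xi^{\frac{n+2k}{n-2k}}P_{g_\xi}\widetilde V_{\xi,\mu}$. Also $V_{\xi,\mu}^{2^*_k-1}=\Lambda_\xi^{\frac{n+2k}{n-2k}}\widetilde V_{\xi,\mu}^{2^*_k-1}$. Since $\Lambda\in C^\infty(M\times M)$ is positive, it is bounded above and below uniformly in $\xi$. Moreover $d_g$ and $d_{g_\xi}$ are uniformly comparable. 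It therefore suffices to bound $|P_{g_\xi}\widetilde V_{\xi,\mu}-\widetilde V_{\xi,\mu}^{2^*_k-1}|$, and I split $M$ into three regions according to $r:=d_{g_\xi}(x,\xi)$.

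\emph{Outer region $r\ge 2\delta$.} Here $\widetilde V_{\xi,\mu}=\mathfrak{c}_{n,k}^{\frac{n-2k}{2}}b_{n,k}^{-1}\mu^{\frac{n-2k}{2}}G_{g_\xi}(\cdot,\xi)$. Away from $\xi$ we have $P_{g_\xi}G_{g_\xi}(\cdot,\xi)=0$, so the error is just $\widetilde V^{2^*_k-1}$. By \eqref{bounds.Green}, $\widetilde V\lesssim \mu^{\frac{n-2k}{2}}r^{2k-n}$, which gives $\widetilde V^{2^*_k-1}\lesssim \mu^{\frac{n+2k}{2}}r^{-(n+2k)}\approx \mu^{\frac{n+2k}{2}}(\mu+r)^{-(n+2k)}$. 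This is the third term of \eqref{error1}.

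\emph{Inner region $r\le\delta$.} Here $\widetilde V=B_{\xi,\mu}$, a radial function in $g_\xi$-normal coordinates, and $\Delta_0^kB_{\xi,\mu}=B_{\xi,\mu}^{2^*_k-1}$ by the Wei--Xu equation after scaling. So the error is $(P_{g_\xi}-\Delta_0^k)B_{\xi,\mu}$.
\begin{itemize}
\item In the locally conformally flat case, $\Lambda_\xi$ can be chosen so that $g_\xi$ is flat near $\xi$. Then the error vanishes there, after shrinking $\delta$ if needed. This is where the stronger smallness (ii) is used in the transition region.
\item When $2k+1\le n\le 2k+3$, I use the refined expansion of $P_{g_\xi}$ in conformal normal coordinates from Appendix \ref{A2} (Proposition B.1 of \cite{MazPrem}). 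On radial functions, $\det g_\xi=1+\bigO(|x|^N)$ and the vanishing of $\Scal$, $\nabla\Scal$, $\Ricci$ and $\Sym\nabla\Ricci$ at $\xi$ from \eqref{conf.3} kill the lowest-order corrections. The remaining terms are at worst of the type curvature $\times\, |x|^{a}\nabla^{b}B$ with $b-a\le 2k-4$.
\item Using $|\nabla^\ell B_{\xi,\mu}|\lesssim \mu^{\frac{n-2k}{2}}(\mu+r)^{2k-n-\ell}$, each such term is $\lesssim \mu^{\frac{n-2k}{2}}(\mu+r)^{4-n}$. This is the first term of \eqref{error1}.
\end{itemize}
I expect this step to be the main obstacle. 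One has to verify, term by term in Juhl's expansion, that after radial symmetry and \eqref{conf.3} no term survives with fewer than four derivatives of gain. I would first check that the Schouten-tensor contributions at order $|x|$ and the trace terms at order $|x|^2$ vanish on radial functions, mimicking Brendle's $k=1$ computation.

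\emph{Transition region $\delta\le r\le2\delta$.} Write $\widetilde V=B_{\xi,\mu}+(1-\chi_{\xi,\delta})\big(\mathfrak{c}^{\frac{n-2k}{2}}_{n,k}b_{n,k}^{-1}\mu^{\frac{n-2k}{2}}G_{g_\xi}-B_{\xi,\mu}\big)$. Two expansions feed into the bracket.
\begin{itemize}
\item Expanding $B_{\xi,\mu}=\mathfrak{c}_{n,k}^{\frac{n-2k}{2}}\mu^{\frac{n-2k}{2}}r^{2k-n}\big(1+\bigO(\mu^2r^{-2})\big)$ with derivatives.
\item Using \eqref{expan.green2}, valid under the dimension or locally conformally flat assumption, namely $G_{g_\xi}=b_{n,k}r^{2k-n}+A_\xi+\bigO^{(2k-1)}(r)$.
\end{itemize}
These show the bracket is $\mu^{\frac{n-2k}{2}}\big(\bigO^{(2k)}(1)+\bigO^{(2k)}(\mu^2 r^{2k-n-2})\big)$, with $A_\xi$ bounded uniformly in $\xi$.

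Applying $P_{g_\xi}$ with the crude expansion \eqref{expansion.Pg0}, derivatives landing on the cutoff cost $\delta^{-\ell}$. This yields $\mu^{\frac{n-2k}{2}}\delta^{-2k}\big(1+\mu^2\delta^{2k-n-2}\big)$, together with the inner-type bound for $(P_{g_\xi}-\Delta_0^k)B_{\xi,\mu}$ and the nonlinear term bounded as in the outer region.

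The smallness assumptions on $\mu$ make $\mu^2\delta^{2k-n-2}\lesssim1$:
\begin{itemize}
\item in case (i), $n-2k+2\le5$ and $\mu^2\lesssim\delta^6$;
\item in case (ii), $\mu^2\lesssim\delta^{n}$.
\end{itemize}
This gives the middle term $\mu^{\frac{n-2k}{2}}\delta^{-2k}\mathds{1}_{\{\delta\le r\le2\delta\}}$ and completes \eqref{error1}.
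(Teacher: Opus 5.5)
Your argument is essentially the paper's proof. Both use conformal covariance to pass to $g_{\xi}$. Both get the core bound $\mu^{\frac{n-2k}{2}}(\mu+r)^{4-n}$ for $(P_{g_{\xi}}-\Delta_{0}^{k})B_{\xi,\mu}$ from the expansion of $P_{g_{\xi}}$ in conformal normal coordinates, which the paper simply imports from \cite{MazPrem}. Both get the neck bound from \eqref{expan.green2} together with $\mu^{2}\delta^{2k-n-2}\lesssim 1$, which is exactly what conditions (i)--(ii) guarantee, and the tail bound from $P_{g_{\xi}}G_{g_{\xi}}(\cdot,\xi)=0$ away from $\xi$ and \eqref{bounds.Green}. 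Your three-region splitting is an unpacked form of the paper's identity $P_{g_{\xi}}\widetilde V_{\xi,\mu}=P_{g_{\xi}}[\chi_{\xi,\delta},B_{\xi,\mu}-\mathfrak{c}^{\frac{n-2k}{2}}_{n,k}b^{-1}_{n,k}\mu^{\frac{n-2k}{2}}G_{g_{\xi}}(\cdot,\xi)]+\chi_{\xi,\delta}P_{g_{\xi}}B_{\xi,\mu}$, where $P[u,v]:=P(uv)-uPv$.

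\textbf{The gap.} In the transition region you assert that $F:=\mathfrak{c}^{\frac{n-2k}{2}}_{n,k}b^{-1}_{n,k}\mu^{\frac{n-2k}{2}}G_{g_{\xi}}(\cdot,\xi)-B_{\xi,\mu}$ is $\mu^{\frac{n-2k}{2}}\big(\bigO^{(2k)}(1)+\bigO^{(2k)}(\mu^{2}r^{2k-n-2})\big)$. But \eqref{expan.green2} controls the remainder only as $\bigO^{(2k-1)}(r)$. So in the Leibniz expansion of $P_{g_{\xi}}\big((1-\chi_{\xi,\delta})F\big)$, the term with all $2k$ derivatives on $F$ is not covered. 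Using \eqref{bounds.Green} alone only gives $|\nabla^{2k}F|\lesssim\mu^{\frac{n-2k}{2}}\delta^{-n}$ on the annulus. That is not bounded by the right-hand side of \eqref{error1} uniformly in $\delta$.

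\textbf{The repair} is the paper's commutator. Write $P_{g_{\xi}}\big((1-\chi_{\xi,\delta})F\big)=P_{g_{\xi}}[1-\chi_{\xi,\delta},F]+(1-\chi_{\xi,\delta})P_{g_{\xi}}F$.
\begin{itemize}
\item The commutator puts at least one derivative on the cutoff, hence at most $2k-1$ on $F$. It therefore yields $\mu^{\frac{n-2k}{2}}\delta^{-2k}(1+\mu^{2}\delta^{2k-n-2})$, exactly as you computed.
\item Since $G_{g_{\xi}}(\cdot,\xi)$ is $P_{g_{\xi}}$-harmonic on the support of $1-\chi_{\xi,\delta}$, the second term equals $-(1-\chi_{\xi,\delta})P_{g_{\xi}}B_{\xi,\mu}$. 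This is absorbed by your core bound plus $B_{\xi,\mu}^{2^{*}_{k}-1}\lesssim\mu^{\frac{n+2k}{2}}r^{-n-2k}$ on $\{r\ge\delta\}$.
\end{itemize}
Alternatively, interior Schauder estimates on the annulus for $P_{g_{\xi}}F=-P_{g_{\xi}}B_{\xi,\mu}$ would recover the missing derivative.

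\textbf{A smaller point.} Since $\delta$ is fixed in the statement, you may not shrink it in the locally conformally flat case. You also do not need flatness there: the same expansion argument produces the first term of \eqref{error1}, which the statement keeps in both cases.
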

\begin{proof}
By the conformal invariance property \eqref{conf.inv.Pg} we have $P_{g}V_{\xi,\mu}-V_{\xi,\mu}^{2^{*}_{k}-1}=\Lambda_{\xi}^{\frac{n+2k}{n-2k}}(P_{g_{\xi}}\widetilde{V}_{\xi,\mu}-\widetilde{V}_{\xi,\mu}^{2^{*}_{k}-1})$, and we write
\begin{align*}
\widetilde{V}_{\xi,\mu}\(x\)=\chi_{\xi,\delta}(x)\Big(B_{\xi,\mu}(x)-\mu^{\frac{n-2k}{2}}\mathfrak{c}^{\frac{n-2k}{2}}_{n,k}\,b^{-1}_{n,k}G_{g_{\xi}}(x,\xi)\Big)+\mu^{\frac{n-2k}{2}}\mathfrak{c}^{\frac{n-2k}{2}}_{n,k}b^{-1}_{n,k}G_{g_{\xi}}(x,\xi).
\end{align*}
We write the expression in terms of the commutator relation $P_{g_{\xi}}[u,v]:=P_{g_{\xi}}(uv)-uP_{g_{\xi}}v$. 
\begin{align*}
P_{g_{\xi}}\widetilde{V}_{\xi,\mu}&\,=P_{g_{\xi}}\big(\,\chi_{\xi,\delta}\big(B_{\xi,\mu}-\mu^{\frac{n-2k}{2}}\mathfrak{c}^{\frac{n-2k}{2}}_{n,k}b^{-1}_{n,k}G_{g_{\xi}}(\cdot,\xi)\big)\big)\\
&\,=P_{g_{\xi}}\big[\chi_{\xi,\delta},\big(B_{\xi,\mu}-\mu^{\frac{n-2k}{2}}\mathfrak{c}^{\frac{n-2k}{2}}_{n,k}b^{-1}_{n,k}G_{g_{\xi}}(\cdot,\xi)\big)\big]+\chi_{\xi,\delta}P_{g_{\xi}}\,B_{\xi,\mu}.
\end{align*}
Note that  for $\mu<\mathfrak{c}_{n,k}^{-3/2}\,\delta^{3}$ 
\begin{align*}
\Big|\nabla^{j}\Big(\big({\mu^{2}+\mathfrak{c}^{-1}_{n,k}~|x|^{2}}\big)^{\frac{2k-n}{2}}-\mathfrak{c}^{\frac{n-2k}{2}}_{n,k}|x|^{2}\Big)\Big|\lesssim\delta^{-j}~\hbox{ in } B(0,9\delta/4)\setminus B(0,3\delta/4).
\end{align*}
Using the expansion of the Green's function \eqref{expan.green2}, we then obtain. 
$$\left|P_{g_{\xi}}\big[\chi_{\xi,\delta},\big(B_{\xi,\mu}-\mu^{\frac{n-2k}{2}}\mathfrak{c}^{\frac{n-2k}{2}}_{n,k}b^{-1}_{n,k}G_{g_{\xi}}(\cdot,\xi)\big)\big]\right|\lesssim\mu^{\frac{n-2k}{2}}\delta^{-2k}\mathds{1}_{\{\delta\leq d_{g_{\xi}}(x,\xi)\leq 2\delta\}},$$
and this implies that 
\begin{align}
&\big|\big(P_{g_{\xi}}\widetilde{V}_{\xi,\mu}-\chi_{\xi,\delta}P_{g_{\xi}}B_{\xi,\mu}\big)(x)\big|\lesssim\mu^{\frac{n-2k}{2}}\delta^{-2k}\mathds{1}_{\{\delta\leq d_{g_{\xi}}(x,\xi)\leq 2\delta\}}.
\end{align}
Then we have obtained that 
\begin{align*}
\big|P_{g_{\xi}}\widetilde{V}_{\xi,\mu}-\widetilde{V}_{\xi,\mu}^{2^{*}_{k}-1}\big|\lesssim&~\chi_{\xi,\delta}|P_{g_{\xi}}B_{\xi,\mu}-B_{\xi,\mu}^{2^{*}_{k}-1}|+\mu^{\frac{n-2k}{2}}\delta^{-2k}\mathds{1}_{\{\delta\leq d_{g_{\xi}}(x,\xi)\leq 2\delta\}}\\
&~+\chi_{\xi,\delta}\big(1-\chi_{\xi,\delta}^{2^{*}_{k}-2}\big)B_{\xi,\mu}^{2^{*}_{k}-1}+|\widetilde{U}_{\xi,\mu}^{2^{*}_{k}-1}-\widetilde{V}_{\xi,\mu}^{2^{*}_{k}-1}|.
\end{align*}
From the expansion of the GJMS operator $P_{g_{\xi}}$ in conformal coordinates \eqref{expansion.Pg1} it follows that
\begin{align}
\chi_{\xi,\delta}\big|P_{g_{\xi}}B_{\xi,\mu}-B_{\xi,\mu}^{2^{*}_{k}-1}\big|\lesssim\mu^{\frac{n-2k}{2}}(\mu+d_{g_{\xi}}(x,\xi))^{4-n}\mathds{1}_{\{d_{g_{\xi}}(x,\xi)\leq2\delta\}}.
\end{align}
Also see eq. (4.5) in \cite{MazPrem}. For the final term with nonlinearity, we obtain
\begin{align}
&\left|\widetilde{U}_{\xi,\mu}^{2^{*}_{k}-1}-\widetilde{V}_{\xi,\mu}^{2^{*}_{k}-1}\right|=\left|\big(\widetilde{U}_{\xi,\mu}+\mathfrak{c}_{n,k}\,b^{-1}_{n,k}\,\mu^{\frac{n-2k}{2}} \(1-\chi_{\xi,\delta}(x)\big)G_{g_{\xi}}(x,\xi)\)^{2^{*}_{k}-1}-\widetilde{U}_{\xi,\mu}^{2^{*}_{k}-1}\right|\notag\\
&\lesssim\Big[\mu^{\frac{n-2k}{2}}\widetilde{U}_{\xi,\mu}^{2^{*}_{k}-2}\,G_{g_{\xi}}(x,\xi)+\big(\mu^{\frac{n-2k}{2}}G_{g_{\xi}}(x,\xi)\big)^{2^{*}_{k}-1}\Big]\mathds{1}_{\{\delta\leq d_{g_{\xi}}(x,\xi)\}}\notag\\
&\lesssim\frac{\mu^{\frac{n+2k}{2}}}{(\mu+d_{g_{\xi}}(x,\xi))^{4k}}\,d_{g_{\xi}}(x,\xi)^{2k-n}\mathds{1}_{\{\delta\leq d_{g_{\xi}}(x,\xi)\}}+\frac{\mu^{\frac{n+2k}{2}}}{d_{g_{\xi}}(x,\xi)^{(n+2k)}}\mathds{1}_{\{\delta\leq d_{g_{\xi}}(x,\xi)\}}\notag\\
&\lesssim\(\frac{\mu^{\frac{n+2k}{2}}}{\(\mu+d_{g_{\xi}}(x,\xi)\)^{n+2k}}\)\mathds{1}_{\{\delta\leq d_{g_{\xi}}(x,\xi)\}}.
\end{align}
Note $\displaystyle \chi_{\xi,\delta}\big(1-\chi_{\xi,\delta}^{2^{*}_{k}-2}\big)B_{\xi,\mu}^{2^{*}_{k}-1}\lesssim\(\frac{\mu^{\frac{n+2k}{2}}}{\(\mu+d_{g_{\xi}}(x,\xi)\)^{n+2k}}\)\mathds{1}_{\{\delta\leq d_{g_{\xi}}(x,\xi)\}}$. 
Combining the last two estimates gives us \eqref{error1}.
\end{proof}
\begin{remark}
We note that the first term in the expansion \eqref{error1} is not present if $(M,g)$ is locally conformally flat. Similarly, the last two terms comes from the contribution of the Green's function in ${V}_{\xi,\mu}$. 
\end{remark}
\medskip

\section{Interaction estimates}\label{sec:Interaction}

In this section, we obtain estimates on the interactions between bubbles. We begin by calculating the self-interactions.
\begin{lemma} 
Let $k\ge1$ be an integer, and $\(M,g\)$ be a smooth, closed Riemannian manifold of dimension $2k+1\le n\le 2k+3$ or $(M,g) $ be locally conformally flat. Assume that the GJMS operator $P_{g}$ satisfies \eqref{positivity}. Fix $\xi\in M$, $0<\delta<1$ and choose $0<\mu<\delta$ sufficiently small, and such that the error estimate \eqref{error1} holds. Then we have the following.
\begin{align}\label{int.1}
\left|~\int_{M}(P_{g}V_{\xi,\mu}-V_{\xi,\mu}^{2^{*}_{k}-1})V_{\xi,\mu}\,dv_{g}\right|\lesssim&~~\mu^{4}\,\delta^{4+2k-n}+\mu^{n-2k}+\mu^{n}\delta^{-n},
\end{align}
with the constants in $\lesssim$ depending only on $n,k$ and $(M,g)$.
\end{lemma}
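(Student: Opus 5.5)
The plan is to bound the integral directly, combining the pointwise error estimate \eqref{error1} of Proposition \ref{prop:error} with a pointwise upper bound for $V_{\xi,\mu}$. We integrate region by region and treat each of the three terms in \eqref{error1} separately.

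\textbf{Step 1: a pointwise bound for the bubble.} Since $\Lambda_\xi$ is bounded above and below uniformly in $\xi$, definitions \eqref{bubble2}--\eqref{bubble3b} give
\[
V_{\xi,\mu}(x)\lesssim \frac{\mu^{\frac{n-2k}{2}}}{(\mu+d_{g_\xi}(x,\xi))^{n-2k}}
\]
on all of $M$. Near $\xi$ this is just the size of $B_{\xi,\mu}$. Away from $\xi$ it follows from the bounds \eqref{bounds.Green} on $G_{g_\xi}$, because there $\mu^{\frac{n-2k}{2}}G_{g_\xi}(x,\xi)\lesssim \mu^{\frac{n-2k}{2}}d^{2k-n}$. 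We will also use that $dv_g$ and $dv_{g_\xi}$ are uniformly comparable. Writing $r:=d_{g_\xi}(x,\xi)$, we then pass to polar coordinates in normal coordinates for $g_\xi$ on the set $\{r\le 2\delta\}$. On the complement we use the crude bound $|M|<\infty$, or radial integration out to the diameter of $M$.

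\textbf{Step 2: the inner (curvature) term.} The first term of \eqref{error1} contributes
\[
\int_{\{r\le2\delta\}} \frac{\mu^{n-2k}}{(\mu+r)^{2n-2k-4}}\,r^{n-1}\,dr .
\]
Rescaling $r=\mu s$ turns this into
\[
\mu^{4}\int_0^{2\delta/\mu}\frac{s^{n-1}}{(1+s)^{2n-2k-4}}\,ds .
\]
The behaviour of the integrand at infinity is $s^{2k+3-n}$, so we split into cases.
\begin{itemize}
\item If $n<2k+4$ (the range $n\le 2k+3$), the integral grows like $(\delta/\mu)^{2k+4-n}$. The term is then $\lesssim \mu^{n-2k}\delta^{2k+4-n}\lesssim \mu^{n-2k}$.
\item If $n>2k+4$, the integral converges and gives $\mu^4$. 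This case can only occur in the locally conformally flat setting, where, by the Remark following Proposition \ref{prop:error}, this term is absent anyway.
\end{itemize}
The stated form $\mu^4\delta^{4+2k-n}$ dominates what we obtain in these cases, since $\delta<1$ and $\mu<\delta$. The borderline case $n=2k+4$ would give a logarithm $\mu^4\log(\delta/\mu)$; it is excluded by the same observation that this term is absent there. I expect this case bookkeeping, namely checking that every admissible $(n,k)$ fits under the three displayed terms, to be the only delicate point.

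\textbf{Step 3: the annulus (cutoff) term.} The second term of \eqref{error1} lives on the annulus $\{\delta\le r\le2\delta\}$, where $V_{\xi,\mu}\lesssim \mu^{\frac{n-2k}{2}}\delta^{2k-n}$. Its contribution is therefore
\[
\lesssim \mu^{\frac{n-2k}{2}}\delta^{-2k}\cdot\mu^{\frac{n-2k}{2}}\delta^{2k-n}\cdot\delta^{n}=\mu^{n-2k}.
\]

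\textbf{Step 4: the outer (Green's function) term.} The third term of \eqref{error1} lives on $\{r\ge\delta\}$, where $\mu+r\approx r$. Its contribution is
\[
\lesssim \int_{\{r\ge\delta\}} \mu^{\frac{n+2k}{2}} r^{-n-2k}\,\mu^{\frac{n-2k}{2}} r^{2k-n}\,dv_g
\lesssim \mu^{n}\int_\delta^{\infty} r^{-n-1}\,dr\approx \mu^{n}\delta^{-n}.
\]

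Summing the contributions of Steps 2--4 gives \eqref{int.1}.
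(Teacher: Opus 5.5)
Your argument is correct and is essentially the paper's proof: you bound $V_{\xi,\mu}$ by the bubble profile on $\{d_{g_\xi}(x,\xi)\le 2\delta\}$ and by $\mu^{\frac{n-2k}{2}}G_{g_\xi}(\cdot,\xi)$ outside it, then integrate the three terms of \eqref{error1} over the core, the neck and the exterior, getting $\mu^{n-2k}$ from the neck and $\mu^{n}\delta^{-n}$ from the exterior exactly as the paper does. One correction: for $n\le 2k+3$ your core bound $\mu^{n-2k}\delta^{2k+4-n}$ is larger than $\mu^{4}\delta^{4+2k-n}$ (since $n-2k<4$), so your closing claim that the latter ``dominates'' holds only for $n>2k+4$; this is harmless because you had already absorbed that contribution into $\mu^{n-2k}$, and that absorption is also how the paper's own core estimate, written as $\mu^4\delta^{4+2k-n}$, has to be read for the lemma to hold.
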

\begin{proof}
Using the error estimate  \eqref{error1} and the bounds on the Green's function \eqref{bounds.Green}, we get
\begin{align*}
&\left|~\int_{M}(P_{g}V_{\xi,\mu}-V_{\xi,\mu}^{2^{*}_{k}-1})V_{\xi,\mu}\,dv_{g}\right|\lesssim\int_{B_{g_{\xi}}(\xi,2\delta)}\frac{\mu^{\frac{n-2k}{2}}}{\(\mu+d_{g_{\xi}}(x,\xi)\)^{n-4}}\widetilde{V}_{\xi,\mu}\,dv_{g_{\xi}}~+\notag\\
&~\mu^{\frac{n-2k}{2}}\delta^{-2k}\int_{B_{g_{\xi}}(\xi,2\delta)\setminus B_{g_{\xi}}(\xi,\delta)}\widetilde{V}_{\xi,\mu}\,dv_{g_{\xi}}+\int_{M\setminus B_{g_{\xi}}(\xi,\delta)}\frac{\mu^{\frac{n+2k}{2}}}{\(\mu+d_{g_{\xi}}(x,\xi)\)^{n+2k}}\widetilde{V}_{\xi,\mu}\,dv_{g_{\xi}}\notag\\
\lesssim&\int_{B_{g_{\xi}}(\xi,2\delta)}\frac{\mu^{n-2k}}{\(\mu+d_{g_{\xi}}(x,\xi)\)^{2n-4-2k}}\,dv_{g_{\xi}}+\frac{\mu^{n-2k}}{\delta^{\,2k}}\int_{B_{g_{\xi}}(\xi,2\delta)\setminus B_{g_{\xi}}(\xi,\delta)}d_{g_{\xi}}(x,\xi)^{2k-n}\,dv_{g_{\xi}}\\\
&~+\mu^{n}\int_{M\setminus B_{g_{\xi}}(\xi,\delta)}\frac{1}{\(\mu+d_{g_{\xi}}(x,\xi)\)^{2n}}\,dv_{g_{\xi}}\notag\\
\lesssim&\int_{B(0,2\delta)}\frac{\mu^{n-2k}}{\(\mu+|x| \)^{2n-4-2k}}\,dx+\mu^{n-2k}\delta^{-2k}\int_{B(0,2\delta)\setminus B(0,\delta)}|x|^{2k-n}\,dx\notag\\
&+\mu^{n}\int_{B(0,2\delta)\setminus B(0,\delta)}|x|^{-2n}\,dx+\mu^{n}\delta^{-n}.\\
\lesssim&~\mu^{4}\,\delta^{4+2k-n}+\mu^{n-2k}+\mu^{n}\delta^{-n}.
\end{align*}
\end{proof}
\begin{remark}
Again, we note that the first term in \eqref{int.1} is not present if $(M,g)$ is locally conformally flat. Similarly, the last two terms come from the contribution of the Green's function in ${V}_{\xi,\mu}$. 
\end{remark}
\medskip

\noindent
Next, we estimate the difference of the nonlinear terms.
\begin{lemma}
Let $k\ge1$ be an integer, and $\(M,g\)$ be a smooth, closed Riemannian manifold of dimension $2k+1\le n\le 2k+3$ or $(M,g) $ be locally conformally flat. Assume that the GJMS operator $P_{g}$ satisfies \eqref{positivity}. Fix $\xi\in M$, $0<\delta<1$.  For $0<\mu<\delta$ sufficiently small, the following holds.
\begin{align}\label{int.2}
\left|~\int_{M}(V_{\xi,\mu}^{2^{*}_{k}}-B_{\xi,\mu}^{2^{*}_{k}})\,dv_{g}\right|\lesssim&~\(\frac{\mu}{\delta}\)^{n},
\end{align}
with the constants in $\lesssim$ depending only on $n,k$ and $(M,g)$.
\end{lemma}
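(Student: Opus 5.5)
The plan is to reduce everything to the conformal metric $g_{\xi}$, observe that $V_{\xi,\mu}$ and $B_{\xi,\mu}$ coincide near $\xi$, and bound the remaining contribution away from $\xi$ by the Green's function bounds \eqref{bounds.Green}.

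\emph{Reduction to $g_{\xi}$.} We use $dv_{g_{\xi}}=\Lambda_{\xi}^{2^{*}_{k}}\,dv_{g}$ and $V_{\xi,\mu}=\Lambda_{\xi}\widetilde{V}_{\xi,\mu}$. These give
$$\int_{M}V_{\xi,\mu}^{2^{*}_{k}}\,dv_{g}=\int_{M}\widetilde{V}_{\xi,\mu}^{2^{*}_{k}}\,dv_{g_{\xi}}.$$
The quantity to estimate is therefore $\int_{M}(\widetilde{V}_{\xi,\mu}^{2^{*}_{k}}-B_{\xi,\mu}^{2^{*}_{k}})\,dv_{g_{\xi}}$. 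Here I read the bubble term as measured in the metric $g_{\xi}$, in which $B_{\xi,\mu}$ is defined; equivalently, it is $\int_M (\Lambda_\xi B_{\xi,\mu})^{2^{*}_{k}}dv_g$. This is the reading under which a bound by $(\mu/\delta)^{n}$ can hold. With the literal $\int_M B_{\xi,\mu}^{2^*_k}dv_g$, the integrands already differ by a factor $\Lambda_\xi^{2^*_k}$ near $\xi$, and that discrepancy is not small in $\mu/\delta$.

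\emph{Splitting the domain.} On $B_{g_{\xi}}(\xi,\delta)$ we have $\chi_{\xi,\delta}\equiv 1$, so $\widetilde{V}_{\xi,\mu}=\widetilde{U}_{\xi,\mu}=B_{\xi,\mu}$ and the integrand vanishes identically. It therefore suffices to bound
$$\int_{M\setminus B_{g_{\xi}}(\xi,\delta)}\big(\widetilde{V}_{\xi,\mu}^{2^{*}_{k}}+B_{\xi,\mu}^{2^{*}_{k}}\big)\,dv_{g_{\xi}}.$$

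\emph{Pointwise bounds for $d:=d_{g_{\xi}}(x,\xi)\ge\delta$.}
\begin{itemize}
\item Directly from \eqref{bubble2}, $B_{\xi,\mu}(x)\lesssim \mu^{\frac{n-2k}{2}}d^{2k-n}$.
\item The Green's function part of \eqref{bubble3} satisfies $G_{g_{\xi}}(x,\xi)=\Lambda_{\xi}(x)^{-1}G_{g}(x,\xi)\lesssim d^{2k-n}$. This uses \eqref{bounds.Green}, that $\Lambda$ is smooth and positive on the compact $M\times M$, and that $d_{g}$ and $d_{g_{\xi}}$ are uniformly comparable.
\end{itemize}
Hence $0\le\widetilde{V}_{\xi,\mu}(x)\lesssim\mu^{\frac{n-2k}{2}}d^{2k-n}$. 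Raising to the power $2^{*}_{k}$ and using $(2k-n)2^{*}_{k}=-2n$, both integrands are $\lesssim\mu^{n}d^{-2n}$. All constants are uniform in $\xi$ by compactness.

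\emph{Integration.} The volume of $M$ and of the $g_{\xi}$-balls is uniformly controlled, so
$$\int_{\{d\ge\delta\}}\mu^{n}d^{-2n}\,dv_{g_{\xi}}\lesssim\mu^{n}\int_{\delta}^{\infty}r^{-2n}r^{n-1}\,dr\lesssim\mu^{n}\delta^{-n}.$$
The region where $d$ exceeds the injectivity radius contributes at most $\mu^{n}\lesssim(\mu/\delta)^{n}$. This gives \eqref{int.2}.

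The analysis itself is routine. The only delicate step is the first one: making sure the measures match, so that the cancellation inside $B_{g_{\xi}}(\xi,\delta)$ is exact and the two integrals are compared in the same metric.
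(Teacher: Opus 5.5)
Your proof is correct and follows the paper's argument. The integrand vanishes on $B_{g_{\xi}}(\xi,\delta)$ because $\chi_{\xi,\delta}\equiv 1$ there. On the complement, both terms are $\lesssim \mu^{n} d_{g_{\xi}}(x,\xi)^{-2n}$ by the definition of $B_{\xi,\mu}$ and \eqref{bounds.Green}, and this integrates to $(\mu/\delta)^{n}$. You bound the two integrands separately instead of using the paper's binomial difference estimate, which changes nothing.

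Your bookkeeping of $\Lambda_{\xi}$ is well placed: the paper's proof silently writes $V_{\xi,\mu}$ where it means $\widetilde{V}_{\xi,\mu}$. With the literal $\int_{M} B_{\xi,\mu}^{2^{*}_{k}}\,dv_{g}$, the discrepancy inside the ball is generically of exact order $\mu^{2}$, with leading coefficient a nonzero multiple of $\Scal_{g}(\xi)$. That is small, but not $\bigO((\mu/\delta)^{n})$, so "not small in $\mu/\delta$" slightly overstates it. The $dv_{g_{\xi}}$ reading you adopt is the one the argument, and its later use in \eqref{energy0}, actually requires.
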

\begin{proof}
We have
\begin{align*}
&|V_{\xi,\mu}^{2^{*}_{k}}-B_{\xi,\mu}^{2^{*}_{k}}|=\,\Big|\big[\chi_{\xi,\delta}B_{\xi,\mu}+\mathfrak{c}_{n,k}\,b^{-1}_{n,k}\,\mu^{\frac{n-2k}{2}} \(1-\chi_{\xi,\delta}(x)\)G_{g_{\xi}}(x,\xi)\big]^{2^{*}_{k}}\\
&\hspace{2.5cm}-\big(\chi_{\xi,\delta}B_{\xi,\mu}+(1-\chi_{\xi,\delta}B_{\xi,\mu})\big)^{2^{*}_{k}}\Big|\\
&\lesssim\Big[\mu^{\frac{n-2k}{2}}B_{\xi,\mu}^{2^{*}_{k}-1}\,G_{g_{\xi}}(x,\xi)+\big(\mu^{\frac{n-2k}{2}}G_{g_{\xi}}(x,\xi)\big)^{2^{*}_{k}}+B_{\xi,\mu}^{2^{*}_{k}}\Big]\mathds{1}_{\{\delta\leq d_{g_{\xi}}(x,\xi)\}}\\
&\lesssim\mu^{n}d_{g_{\xi}}(x,\xi)^{-2n}\mathds{1}_{\{\delta\leq d_{g_{\xi}}(x,\xi)\}}.
\end{align*}
Then 
\begin{align*}
\left|~\int_{M}(V_{\xi,\mu}^{2^{*}_{k}}-B_{\xi,\mu}^{2^{*}_{k}})\,dv_{g}\right|\lesssim\left|~\int_{M\setminus B_{g_{\xi}}(\xi,\delta)}(V_{\xi,\mu}^{2^{*}_{k}}-B_{\xi,\mu}^{2^{*}_{k}})\,dv_{g}\right|\lesssim\(\frac{\mu}{\delta}\)^{n}. 
\end{align*}
\end{proof}

\noindent
We now calculate the interactions between two different bubbles. When $k=1$, this was obtained in Lemma B.5. in \cite{Bre1}, and very recently in \cite{GongLeeWei} (Section 5) for the case $k=2$ 
\begin{lemma} 
Let $k\ge1$ be an integer, $\(M,g\)$ be a smooth, closed Riemannian manifold of dimension $2k+1\le n\le 2k+3$ or $(M,g) $ be locally conformally flat. Assume that the GJMS operator $P_{g}$ satisfies \eqref{positivity} and fix  $0<\delta<1$. Then for all $\xi_{i}, \xi_{j}\in M$ and $0<\mu_{i},\mu_{j}<\delta$ sufficiently small and such that the error estimate \eqref{error1} holds, we obtain the following interaction estimate.
\begin{align}\label{int.3}
&\left|~\int_{M}(P_{g}V_{\xi_{i},\mu_{i}}-V_{\xi_{i},\mu_{i}}^{2^{*}_{k}-1})V_{\xi_{j},\mu_{j}}\,dv_{g}\right|\lesssim\(\frac{\mu_{i}}{\mu_{j}}+\frac{d_{g}(\xi_{i},\xi_{j})}{\mu_{i}\mu_{j}}\)^{\frac{2k-n}{2}}\(\mu_{i}^{4}+\delta^{n-2k}+\(\frac{\mu_{i}}{\delta}\)^{2k}\),\notag\\
\end{align}
with the constants in $\lesssim$ depending only on $n,k$ and $(M,g)$.
\end{lemma}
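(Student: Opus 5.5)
We combine the pointwise error estimate \eqref{error1} of Proposition \ref{prop:error}, applied to the bubble at $\xi_i$, with a pointwise upper bound for $V_{\xi_j,\mu_j}$. Then we integrate region by region.

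First, since $\Lambda_{\xi}$ is smooth, positive and uniformly bounded above and below in $\xi$, the metrics $g$, $g_{\xi_i}$, $g_{\xi_j}$ are uniformly equivalent. Hence $d_{g_{\xi}}(x,\xi)\approx d_g(x,\xi)$, and we can compute all integrals with respect to $dv_g$ and $d_g$. Next, from the definition \eqref{bubble3}, \eqref{bubble3b} and the Green's function bounds \eqref{bounds.Green}, we get the global pointwise bound
\begin{align*}
V_{\xi_j,\mu_j}(x)\lesssim \frac{\mu_j^{\frac{n-2k}{2}}}{\big(\mu_j+d_g(x,\xi_j)\big)^{n-2k}}\qquad\forall x\in M.
\end{align*}
Both the bubble part and the Green's function part obey this, since $B_{\xi,\mu}\approx \mu^{\frac{n-2k}{2}}(\mu+d)^{2k-n}$.

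Then the integral is bounded by the sum of three terms $I_1+I_2+I_3$, one for each term of \eqref{error1}:
\begin{align*}
I_1&=\int_{B(\xi_i,2\delta)}\frac{\mu_i^{\frac{n-2k}{2}}}{(\mu_i+d(x,\xi_i))^{n-4}}\,\frac{\mu_j^{\frac{n-2k}{2}}}{(\mu_j+d(x,\xi_j))^{n-2k}}\,dv_g,\\
I_2&=\mu_i^{\frac{n-2k}{2}}\delta^{-2k}\int_{\delta\le d(x,\xi_i)\le2\delta}\frac{\mu_j^{\frac{n-2k}{2}}}{(\mu_j+d(x,\xi_j))^{n-2k}}\,dv_g,\\
I_3&=\int_{M\setminus B(\xi_i,\delta)}\frac{\mu_i^{\frac{n+2k}{2}}}{(\mu_i+d(x,\xi_i))^{n+2k}}\,\frac{\mu_j^{\frac{n-2k}{2}}}{(\mu_j+d(x,\xi_j))^{n-2k}}\,dv_g.
\end{align*}
The key tool is the standard two-bubble interaction inequality. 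For exponents $\alpha,\beta>0$ with $\alpha+\beta\ge n$ (with a logarithm in borderline cases),
\begin{align*}
\int_{\R^n}\frac{\mu_i^{a}\mu_j^{b}\,dx}{(\mu_i+|x-y_i|)^{\alpha}(\mu_j+|x-y_j|)^{\beta}}
\end{align*}
is bounded by the interaction quantity
\begin{align*}
\varepsilon_{ij}:=\Big(\frac{\mu_i}{\mu_j}+\frac{\mu_j}{\mu_i}+\frac{d(\xi_i,\xi_j)^2}{\mu_i\mu_j}\Big)^{\frac{2k-n}{2}},
\end{align*}
times a factor coming from the homogeneity defect. We establish this by splitting $\R^n$ into the region near $\xi_i$ (where the $j$-factor is essentially constant), the region near $\xi_j$, and the far region.

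For $I_3$, the $i$-factor is a full bubble power $U_i^{2^*_k-1}$ restricted to $\{d\ge\delta\}$. On this set, $\mu_i^{\frac{n+2k}{2}}(\mu_i+d)^{-n-2k}\le (\mu_i/\delta)^{2k}\,\mu_i^{\frac{n-2k}{2}}d^{-n}$. Integrating against the $j$-bubble then gives $(\mu_i/\delta)^{2k}\varepsilon_{ij}$, up to harmless factors. For $I_1$, we compare the weight with the exact bubble power $\mu_i^{\frac{n+2k}{2}}(\mu_i+d)^{-n-2k}$. The ratio is $\mu_i^{-2k}(\mu_i+d)^{2k+4}$, which is bounded by $\mu_i^{4}$ near the center, and the integral over $B(\xi_i,2\delta)$ yields the $\mu_i^4\varepsilon_{ij}$ contribution. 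The dimension restriction $n\le 2k+3$ (or the absence of this term in the locally conformally flat case) guarantees integrability of the tails. For $I_2$, the annulus has volume $\approx\delta^n$. The $j$-factor there is at most $\varepsilon_{ij}$-comparable after using $\mu_i^{\frac{n-2k}{2}}\delta^{2k-n}\lesssim$ the $i$-bubble, and this produces a factor bounded by $\delta^{n-2k}$ or $(\mu_i/\delta)^{2k}$. Finally, we note that $\varepsilon_{ij}$ is dominated by the expression $\big(\frac{\mu_i}{\mu_j}+\frac{d}{\mu_i\mu_j}\big)^{\frac{2k-n}{2}}$ in the statement, since $d^2\le d$ on a compact manifold after scaling.

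The main obstacle is $I_1$. The weight $(\mu_i+d)^{4-n}$ is not a bubble power, so the interaction inequality must be applied carefully, in the spirit of Lemma B.5 of \cite{Bre1}. We must split into the cases $\mu_i\le\mu_j$ and $\mu_i>\mu_j$, and handle separately the cases $d(\xi_i,\xi_j)\lesssim\max(\mu_i,\mu_j)$ and $d(\xi_i,\xi_j)\gg\max(\mu_i,\mu_j)$, checking in each case that the extra gain is exactly $\mu_i^4$ and not worse.
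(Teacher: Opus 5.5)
Your decomposition is the paper's: the global pointwise bound on $V_{\xi_j,\mu_j}$ combined with the three pieces of \eqref{error1} gives exactly $\mathscr{I}_{\text{C}},\mathscr{I}_{\text{N}},\mathscr{I}_{\text{E}}$. However, the step you single out as the main obstacle, $I_1$, is resolved incorrectly.

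In $I_1$ the exponents are $\alpha=n-4$ and $\beta=n-2k$, so $\alpha+\beta=2n-2k-4<n$ precisely when $n\le 2k+3$. The dimension restriction therefore does not make the tails integrable; it makes the interaction integral diverge at infinity on $\R^n$. As a result, $I_1$ is dominated by the intermediate region $\mu_i\ll d(x,\xi_i)\lesssim\delta$, where your comparison ratio $\mu_i^{-2k}(\mu_i+d)^{2k+4}$ is of order $\mu_i^{-2k}\delta^{2k+4}$, not $\mu_i^4$. For instance, with $\xi_i=\xi_j$ and $\mu_i=\mu_j=\mu$ (so $\varepsilon_{ij}\approx 1$):
\begin{equation*}
I_1\approx \mu^{n-2k}\int_0^{2\delta}(\mu+r)^{4+2k-2n}\,r^{n-1}\,dr\approx \mu^{n-2k}\delta^{4+2k-n}=\mu^4\Big(\frac{\delta}{\mu}\Big)^{4+2k-n}\gg \mu^4\varepsilon_{ij}.
\end{equation*}
So the extra gain is not $\mu_i^4$. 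What holds is
\begin{equation*}
\mathscr{I}_{\text{C}}\lesssim(\mu_i^4+\delta^4)\Big(\frac{\mu_i\mu_j}{\mu_i^2+d_g(\xi_i,\xi_j)^2}\Big)^{\frac{n-2k}{2}}.
\end{equation*}
The paper proves this by splitting $B(0,2\delta)$ according to whether $2|x-\Theta_{\xi_i\xi_j}|\ge\mu_i+|\Theta_{\xi_i\xi_j}|$. On that part the $j$-factor is frozen at $\mu_j^{\frac{n-2k}{2}}(\mu_i+|\Theta_{\xi_i\xi_j}|)^{2k-n}$ and one uses $\int_{B(0,2\delta)}(\mu_i+|x|)^{4-n}dx\lesssim\delta^4$. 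On the complementary ball the $i$-factor is frozen instead. The $\delta^4$ is then absorbed into the neck contribution $\delta^{n-2k}$ because $n-2k\le 3<4$. That absorption is the real role of the hypothesis $n\le 2k+3$; in the locally conformally flat case $I_1$ is simply absent.

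Your closing reduction is also reversed. Both your argument and the paper's produce the factor $\big(\frac{\mu_i}{\mu_j}+\frac{d_g(\xi_i,\xi_j)^2}{\mu_i\mu_j}\big)^{\frac{2k-n}{2}}$. Your $\varepsilon_{ij}$ is bounded by this for free, since its base contains the extra positive term $\mu_j/\mu_i$ and the exponent is negative. The unsquared $d_g(\xi_i,\xi_j)$ in the displayed statement should be read as $d_g(\xi_i,\xi_j)^2$. Invoking $d^2\le Cd$ goes the wrong way: it makes the printed expression smaller. For example, with $\mu_i=\mu_j=\mu$ and $d_g(\xi_i,\xi_j)=\mu$, one has $\varepsilon_{ij}\approx 1$ while the printed expression is $\approx\mu^{\frac{n-2k}{2}}$.

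Finally, in $I_3$ the replacement $(\mu_i+d)^{-n-2k}\le\delta^{-2k}d^{-n}$ is too crude when $d_g(\xi_i,\xi_j)\gg\delta$. There the $j$-factor is $\approx\mu_j^{\frac{n-2k}{2}}d_g(\xi_i,\xi_j)^{2k-n}$ on $\{\delta\le d(x,\xi_i)\le d_g(\xi_i,\xi_j)/2\}$, and integrating $d(x,\xi_i)^{-n}$ over that annulus costs a factor $\log(d_g(\xi_i,\xi_j)/\delta)$. Keeping $d^{-n-2k}$ in that region gives $(\mu_i/\delta)^{2k}$ without the logarithm, as in the paper's estimate of $\mathscr{I}_{\text{E}}$.
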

\begin{proof}
Using the error estimate \eqref{error1} we can write
\begin{align}
&\int_{M}\big(P_{g}V_{\xi_{i},\mu_{i}}-V_{\xi_{i},\mu_{i}}^{2^{*}_{k}-1}\big)V_{\xi_{j},\mu_{j}}\,dv_{g}=\int_{M}\big(P_{g_{\xi_{i}}}\widetilde{V}_{\xi_{i},\mu_{i}}-\widetilde{V}_{\xi_{i},\mu_{i}}^{2^{*}_{k}-1}\big)\Lambda_{\xi_{i}}^{-1}V_{\xi_{j},\mu_{j}}\,dv_{g_{\xi_{i}}}\notag\\
&\lesssim\int_{B_{g_{\xi_{i}}}(\xi_{i},2\delta)}\frac{\mu_{i}^{\frac{n-2k}{2}}}{\big(\mu_{i}+d_{g_{\xi_{i}}}(x,\xi_{i})\big)^{n-4}}\Lambda_{\xi_{i}}^{-1}V_{\xi_{j},\mu_{j}}\,dv_{g_{\xi_{i}}}\notag\\
&~+\mu_{i}^{\frac{n-2k}{2}}\delta^{-2k}\int_{B_{g_{\xi_{i}}}(\xi_{i},2\delta)\setminus B_{g_{\xi_{i}}}(\xi_{i},\delta)}\Lambda_{\xi_{i}}^{-1}V_{\xi_{j},\mu_{j}}\,dv_{g_{\xi_{i}}}\notag\\
&~+\int_{M\setminus B_{g_{\xi_{i}}}(\xi_{i},\delta)}\frac{\mu_{i}^{\frac{n+2k}{2}}}{\big(\mu_{i}+d_{g_{\xi_{i}}}(x,\xi_{i})\big)^{n+2k}}\Lambda_{\xi_{i}}^{-1}V_{\xi_{j},\mu_{j}}\,dv_{g_{\xi_{i}}}:={\mathscr{I}}_{\text{C}}+{\mathscr{I}}_{\text{N}}+{\mathscr{I}}_{\text{E}}.
\end{align}
We estimate term-by-term. Note that
$$V_{\xi,\mu}\lesssim\(\frac{\mu^{\frac{n-2k}{2}}}{\big(\mu+d_{g_{\xi}}(x,\xi)\big)^{n-2k}}\)\mathds{1}_{\{d_{g_{\xi}}(x,\xi)\leq2\delta\}}+b^{-1}_{n,k}\,\mu^{\frac{n-2k}{2}}G_{g_{\xi}}(x,\xi)\mathds{1}_{\{\delta\leq d_{g_{\xi}}(x,\xi)\}}.$$
\medskip

\noindent
We start with the terms in the ``core region''. We write
\begin{align*}
{\mathscr{I}}_{\text{C}}&=\int_{B_{g_{\xi_{i}}}(\xi_{i},2\delta)}\frac{\mu_{i}^{\frac{n-2k}{2}}}{\(\mu_{i}+d_{g_{\xi_{i}}}(x,\xi_{i})\)^{n-4}}\Lambda_{\xi_{i}}^{-1}V_{\xi_{j},\mu_{j}}\,dv_{g_{\xi_{i}}}\\
&\le\int_{B_{g_{\xi_{i}}}(\xi_{i},2\delta)\cap B_{g_{\xi_{j}}}(\xi_{j},2\delta)}+\int_{B_{g_{\xi_{i}}}(\xi_{i},2\delta)\setminus B_{g_{\xi_{j}}}(\xi_{j},\delta)}:={\mathscr{I}}_{\text{C}}^{a}+{\mathscr{I}}_{\text{C}}^{b}.
\end{align*}
First assume $d_{g_{\xi_{i}}}(\xi_{i},\xi_{j})\leq 3\delta$. This allows us to write 
\begin{align}
&{\mathscr{I}}^{a}_{\text{C}}\leq\notag\\
&\int_{B_{g_{\xi_{i}}}(\xi_{i},2\delta)\cap B_{g_{\xi_{j}}}(\xi_{j},2\delta)}\Lambda_{\xi_{i}}^{-1}\Lambda_{\xi_{j}}\frac{\mu_{i}^{\frac{n-2k}{2}}}{\(\mu_{i}+d_{g_{\xi_{i}}}(x,\xi_{i})\)^{n-4}}\frac{\mu_{j}^{\frac{n-2k}{2}}}{(\mu_{j}^{2}+\mathfrak{c}^{-1}_{n,k}~d_{g_{\xi_{j}}}(x,\xi_{j})^{2})^{\frac{n-2k}{2}}}\,dv_{g_{\xi_{i}}}\notag\\
&\lesssim\int_{B{(0,2\delta)}}\Lambda_{\xi_{i}}^{-1}\Lambda_{\xi_{j}}(\exp_{\xi_{i}}^{g_{\xi_{i}}}(x))\frac{\mu_{i}^{\frac{n-2k}{2}}}{\(\mu_{i}+|x|\)^{n-4}}\frac{\mu_{j}^{\frac{n-2k}{2}}}{(\mu_{j}^{2}+~d_{g_{\xi_{j}}}(\exp_{\xi_{i}}^{g_{\xi_{i}}}(x),\xi_{j})^{2})^{\frac{n-2k}{2}}}\,dx\notag\\
&\lesssim\int_{B{(0,2\delta)}}\frac{\mu_{i}^{\frac{n-2k}{2}}}{\(\mu_{i}+|x|\)^{n-4}}\frac{\mu_{j}^{\frac{n-2k}{2}}}{(\mu_{j}^{2}+|x-\Theta_{\xi_{i}\xi_{j}}|^{2})^{\frac{n-2k}{2}}}\,dx.
\end{align}
Here $\Theta_{\xi_{i}\xi_{j}}:=(\exp_{\xi_{i}}^{g_{\xi_{i}}})^{-1}(\xi_{j})$ and by equivalence of metrics $|\Theta_{\xi_{i}\xi_{j}}|=d_{g_{\xi_{i}}}(\xi_{i},\xi_{j})\approx d_{g}(\xi_{i},\xi_{j})$. And thus
\begin{align}
&{\mathscr{I}}^{a}_{\text{C}}~\lesssim\int_{B{(0,2\delta)}}\frac{\mu_{i}^{\frac{n-2k}{2}}}{\(\mu_{i}+|x|\)^{n-4}}\frac{\mu_{j}^{\frac{n-2k}{2}}}{(\mu_{j}^{2}+|x-\Theta_{\xi_{i}\xi_{j}}|^{2})^{\frac{n-2k}{2}}}\,dx\notag\\
&\,\lesssim\int_{\{2|x-\Theta_{\xi_{i}\xi_{j}}|\geq \mu_{i}+|\Theta_{\xi_{i}\xi_{j}}|\}\cap B{(0,2\delta)}}+\int_{\{2|x-\Theta_{\xi_{i}\xi_{j}}|\leq \mu_{i}+|\Theta_{\xi_{i}\xi_{j}}|\}\cap B{(0,2\delta)}}\notag\\
&\,\lesssim\(\frac{\mu_{i}\mu_{j}}{\mu_{i}^{2}+|\Theta_{\xi_{i}\xi_{j}}|^{2}}\)^{\frac{n-2k}{2}}\int_{B{(0,2\delta)}}\frac{1}{\(\mu_{i}+|x|\)^{n-4}}\,dx\notag\\
&~+\(\frac{\mu_{i}\mu_{j}}{\mu_{i}^{2}+|\Theta_{\xi_{i}\xi_{j}}|^{2}}\)^{\frac{n-2k}{2}}\int_{B{(\Theta_{\xi_{i}\xi_{j}},(\mu_{i}+|\Theta_{\xi_{i}\xi_{j}}|)/2)}}\frac{\(\mu_{i}+|\Theta_{\xi_{i}\xi_{j}}|\)^{4-2k}}{\(\mu^{2}_{j}+|x-\Theta_{\xi_{i}\xi_{j}}|^{2}\)^{\frac{n-2k}{2}}}\,dx\notag\\
&\,\lesssim\(\frac{\mu_{i}\mu_{j}}{\mu_{i}^{2}+|\Theta_{\xi_{i}\xi_{j}}|^{2}}\)^{\frac{n-2k}{2}}\(\delta^{4}+\(\mu_{i}+|\Theta_{\xi_{i}\xi_{j}}|\)^{4}\)\lesssim\(\frac{\mu_{i}\mu_{j}}{\mu_{i}^{2}+d_{g}(\xi_{i},\xi_{j})^{2}}\)^{\frac{n-2k}{2}}\(\mu_{i}^{4}+\delta^{4}\).
\end{align}
Next, still assuming $d_{g_{\xi_{i}}}(\xi_{i},\xi_{j})\leq 3\delta$, we can write
\begin{align}
&{\mathscr{I}}^{b}_{\text{C}}\leq \int_{B_{g_{\xi_{i}}}(\xi_{i},2\delta)\setminus B_{g_{\xi_{j}}}(\xi_{j},\delta)}\Lambda_{\xi_{i}}^{-1}\Lambda_{\xi_{j}}\frac{\mu_{i}^{\frac{n-2k}{2}}}{\(\mu_{i}+d_{g_{\xi_{i}}}(x,\xi_{i})\)^{n-4}}\frac{\mu_{j}^{\frac{n-2k}{2}}}{\(d_{g_{\xi_{j}}}(x,\xi_{j})\)^{n-2k}}\,dv_{g_{\xi_{i}}}\notag\\
&\lesssim\int_{B{(0,2\delta)}}\frac{\mu_{i}^{\frac{n-2k}{2}}}{\(\mu_{i}+|x|\)^{n-4}}\frac{\mu_{j}^{\frac{n-2k}{2}}}{|x-\Theta_{\xi_{i}\xi_{j}}|^{n-2k}}\,dx\lesssim (\mu_{i}\mu_{j})^{\frac{n-2k}{2}}\delta^{2k+4-n}\notag\\
&\lesssim\(\frac{\mu_{i}\mu_{j}}{\mu_{i}^{2}+d_{g}(\xi_{i},\xi_{j})^{2}}\)^{\frac{n-2k}{2}}\delta^{4}.
\end{align}
For $d_{g_{\xi_{i}}}(\xi_{i},\xi_{j})\geq 3\delta$ we similarly have
\begin{align}
&{\mathscr{I}}_{\text{C}}\leq\int_{B_{g_{\xi_{i}}}(\xi_{i},2\delta)}\Lambda_{\xi_{i}}^{-1}\Lambda_{\xi_{j}}\frac{\mu_{i}^{\frac{n-2k}{2}}}{\(\mu_{i}+d_{g_{\xi_{i}}}(x,\xi_{i})\)^{n-4}}\frac{\mu_{j}^{\frac{n-2k}{2}}}{\(d_{g_{\xi_{j}}}(x,\xi_{j})\)^{n-2k}}\,dv_{g_{\xi_{i}}}\notag\\
&\lesssim\(\frac{\mu_{i}\mu_{j}}{\mu_{i}^{2}+d_{g}(\xi_{i},\xi_{j})^{2}}\)^{\frac{n-2k}{2}}\delta^{4}.
\end{align}
So we have obtained that
\begin{align}
&{\mathscr{I}}_{\text{C}}\lesssim\(\frac{\mu_{i}\mu_{j}}{\mu_{i}^{2}+d_{g}(\xi_{i},\xi_{j})^{2}}\)^{\frac{n-2k}{2}}(\mu_{i}^{4}+\delta^{4}).
\end{align}
\medskip

\noindent
Next, we estimate the terms in the ``exterior-region'' in the interaction.
\begin{align*}
{\mathscr{I}}_{\text{E}}&=\int_{M\setminus B_{g_{\xi_{i}}}(\xi_{i},\delta)}\frac{\mu_{i}^{\frac{n+2k}{2}}}{\(\mu_{i}+d_{g_{\xi_{i}}}(x,\xi_{i})\)^{n+2k}}\Lambda_{\xi_{i}}^{-1}V_{\xi_{j},\mu_{j}}\,dv_{g_{\xi_{i}}}\\
&\le\int_{B_{g_{\xi_{j}}}(\xi_{j},2\delta)\setminus B_{g_{\xi_{i}}}(\xi_{i},\delta)}+\int_{M\setminus(B_{g_{\xi_{j}}}(\xi_{j},\delta)\cup B_{g_{\xi_{i}}}(\xi_{i},\delta))}:={\mathscr{I}}^{a}_{\text{E}}+{\mathscr{I}}^{b}_{\text{E}}.
\end{align*}
Assume again $d_{g_{\xi_{j}}}(\xi_{i},\xi_{j})\leq 3\delta$ and let $\Theta_{\xi_{j}\xi_{i}}:=(\exp_{\xi_{j}}^{g_{\xi_{j}}})^{-1}(\xi_{i})$.  Then we can write 
\begin{align}
{\mathscr{I}}^{a}_{\text{E}}&~\lesssim\int_{B{(0,2\delta)\setminus}B(\Theta_{\xi_{j}\xi_{i}},\delta/2)}\frac{\mu_{i}^{\frac{n+2k}{2}}}{\(\mu_{i}+|x-\Theta_{\xi_{j}\xi_{i}}|\)^{n+2k}}\frac{\mu_{j}^{\frac{n-2k}{2}}}{(\mu_{j}^{2}+|x|^{2})^{\frac{n-2k}{2}}}\,dx\notag\\
&\lesssim\int_{\{4|x|\geq \mu_{i}+|\Theta_{\xi_{j}\xi_{i}}|\}\cap B(0,2\delta)\setminus B(\Theta_{\xi_{j}\xi_{i}},\delta/2)}+\int_{\{4|x|\leq \mu_{i}+|\Theta_{\xi_{j}\xi_{i}}|\}\cap B{(0,2\delta)\setminus}B(\Theta_{\xi_{j}\xi_{i}},\delta/2)}\notag\\
&\lesssim\(\frac{\mu_{i}\mu_{j}}{\mu_{i}^{2}+|\Theta_{\xi_{j}\xi_{i}}|^{2}}\)^{\frac{n-2k}{2}}\int_{B(\Theta_{\xi_{j}\xi_{i}},6\delta)\setminus B(\Theta_{\xi_{j}\xi_{i}},\delta/2)}\frac{\mu_{i}^{2k}}{\(\mu_{i}+|x-\Theta_{\xi_{j}\xi_{i}}|\)^{n+2k}}\,dx\notag\\
&~+\(\frac{\mu_{i}\mu_{j}}{\mu_{i}^{2}+|\Theta_{\xi_{j}\xi_{i}}|^{2}}\)^{\frac{n-2k}{2}}\mu_{i}^{2k}\int_{B{(0,(\mu_{i}+|\Theta_{\xi_{j}\xi_{i}}|)/2)}}\frac{\(\mu_{i}+|\Theta_{\xi_{j}\xi_{i}}|\)^{-4k}}{\(\mu^{2}_{j}+|x|^{2}\)^{\frac{n-2k}{2}}}\,dx\notag\\
&\lesssim\(\frac{\mu_{i}\mu_{j}}{\mu_{i}^{2}+d_{g}(\xi_{i},\xi_{j})^{2}}\)^{\frac{n-2k}{2}}\(\frac{\mu_{i}}{\delta}\)^{2k}.
\end{align}
And, still assuming $d_{g_{\xi_{j}}}(\xi_{i},\xi_{j})\leq 3\delta$ we can write
\begin{align}
{\mathscr{I}}^{b}_{\text{E}}&~=\int_{M\setminus(B_{g_{\xi_{j}}}(\xi_{j},\delta)\cup B_{g_{\xi_{i}}}(\xi_{i},\delta))}\frac{\mu_{i}^{\frac{n+2k}{2}}}{\(\mu_{i}+d_{g_{\xi_{i}}}(x,\xi_{i})\)^{n+2k}}\Lambda_{\xi_{i}}^{-1}V_{\xi_{j},\mu_{j}}~dv_{g_{\xi_{i}}}\notag\\
&\lesssim\int_{M\setminus(B_{g_{\xi_{j}}}(\xi_{j},\delta)\cup B_{g_{\xi_{i}}}(\xi_{i},\delta))}\frac{\mu_{i}^{\frac{n+2k}{2}}}{\(\mu_{i}+d_{g_{\xi_{i}}}(x,\xi_{i})\)^{n+2k}}\frac{\mu_{j}^{\frac{n-2k}{2}}}{\(d_{g_{\xi_{j}}}(x,\xi_{j})\)^{n-2k}}~dv_{g_{\xi_{i}}}.\notag\\
&\lesssim\mu_{j}^{\frac{n-2k}{2}}\delta^{2k-n}\int_{M\setminus B_{g_{\xi_{i}}}(\xi_{i},\delta/2)}\frac{\mu_{i}^{\frac{n+2k}{2}}}{\(\mu_{i}+d_{g_{\xi_{i}}}(x,\xi_{i})\)^{n+2k}}\,dv_{g_{\xi_{i}}}\notag\\
&\lesssim\mu_{j}^{\frac{n-2k}{2}}\delta^{2k-n}\[~\frac{\mu_{i}^{\frac{n+2k}{2}}}{\delta^{2k}}+\mu_{i}^{\frac{n+2k}{2}}\]\lesssim\(\frac{\mu_{i}\mu_{j}}{\mu_{i}^{2}+d_{g}(\xi_{i},\xi_{j})^{2}}\)^{\frac{n-2k}{2}}\(\frac{\mu_{i}}{\delta}\)^{2k}.
\end{align}
For $d_{g_{\xi_{j}}}(\xi_{i},\xi_{j})\geq 3\delta$ we proceed as follows. 
\begin{align}
&~{\mathscr{I}}_{\text{E}}=\int_{M\setminus B_{g_{\xi_{i}}}(\xi_{i},\delta)}\frac{\mu_{i}^{\frac{n+2k}{2}}}{\(\mu_{i}+d_{g_{\xi_{i}}}(x,\xi_{i})\)^{n+2k}}\Lambda_{\xi_{i}}^{-1}V_{\xi_{j},\mu_{j}}\,dv_{g_{\xi_{i}}}\notag\\
&~=\int_{B_{g_{\xi_{j}}}(\xi_{j},2\delta)\setminus B_{g_{\xi_{i}}}(\xi_{i},\delta)}+\int_{M\setminus(B_{g_{\xi_{j}}}(\xi_{j},\delta)\cup B_{g_{\xi_{i}}}(\xi_{i},\delta))}\notag\\
&\lesssim\int_{B_{g_{\xi_{j}}}(\xi_{j},2\delta)\setminus B_{g_{\xi_{i}}}(\xi_{i},\delta)}\frac{\mu_{i}^{\frac{n+2k}{2}}}{\(\mu_{i}+d_{g_{\xi_{i}}}(x,\xi_{i})\)^{n+2k}}\(\frac{\mu_{j}}{\mu_{j}^{2}+d_{g_{\xi_{j}}}(x,\xi_{j})^{2}}\)^{\frac{n-2k}{2}}\,dv_{g_{\xi_{j}}}\notag\\
&+\int_{M\setminus(B_{g_{\xi_{j}}}(\xi_{j},\delta)\cup B_{g_{\xi_{i}}}(\xi_{i},\delta))}\frac{\mu_{i}^{\frac{n+2k}{2}}}{\(\mu_{i}+d_{g_{\xi_{i}}}(x,\xi_{i})\)^{n+2k}}\frac{\mu_{j}^{\frac{n-2k}{2}}}{d_{g_{\xi_{j}}}(x,\xi_{j})^{n-2k}}\,dv_{g_{\xi_{i}}}\notag\\
&\lesssim(\mu_{i}\mu_{j})^{\frac{n-2k}{2}}\frac{\mu_{i}^{2k}}{d_{g_{\xi_{j}}}(\xi_{i},\xi_{j})^{n+2k}}\delta^{2k}+(\mu_{i}\mu_{j})^{\frac{n-2k}{2}}\mu_{j}^{2k}\[1+\frac{\delta^{-2k}}{d_{g_{\xi_{j}}}(\xi_{i},\xi_{j})^{n-2k}}\].
\end{align}
Hence 
\begin{align}\label{Int:ext}
&{\mathscr{I}}_{\text{E}}\lesssim\(\frac{\mu_{i}\mu_{j}}{\mu_{i}^{2}+d_{g}(\xi_{i},\xi_{j})^{2}}\)^{\frac{n-2k}{2}}\(\frac{\mu_{i}}{\delta}\)^{2k}.
\end{align}
\medskip

\noindent
In  the ``neck-region'' we get
\begin{align}
&{\mathscr{I}}_{\text{N}}=\mu_{i}^{\frac{n-2k}{2}}\delta^{-2k}\int_{B_{g_{\xi_{i}}}(\xi_{i},2\delta)\setminus B_{g_{\xi_{i}}}(\xi_{i},\delta)}\Lambda_{\xi_{i}}^{-1}V_{\xi_{j},\mu_{j}}\,dv_{g_{\xi_{i}}}\notag\\
&~\lesssim(\mu_{i}\mu_{j})^{\frac{n-2k}{2}}\delta^{-2k}\int_{B_{g_{\xi_{i}}}(\xi_{i},2\delta)\setminus B_{g_{\xi_{i}}}(\xi_{i},\delta)}d_{g_{\xi_{j}}}(x,\xi_{j})^{2k-n}\,dv_{g_{\xi_{i}}}\notag\\
&~\lesssim(\mu_{i}\mu_{j})^{\frac{n-2k}{2}}\mathds{1}_{\{d_{g_{\xi_{i}}}(\xi_{i},\xi_{j})\leq3\delta\}}+\(\frac{\mu_{i}\mu_{j}}{d_{g}(\xi_{i},\xi_{j})^{2}}\)^{\frac{n-2k}{2}}\delta^{n-2k}\mathds{1}_{\{d_{g_{\xi_{i}}}(\xi_{i},\xi_{j})\geq3\delta\}}\notag\\
&~\lesssim\(\frac{\mu_{i}\mu_{j}}{\mu_{i}^{2}+d_{g}(\xi_{i},\xi_{j})^{2}}\)^{\frac{n-2k}{2}}\delta^{n-2k}.
\end{align}
\smallskip

\noindent
So we have obtained that
\begin{align*}
\int_{M}|P_{g}V_{\xi_{i},\mu_{i}}-&~V_{\xi_{i},\mu_{i}}^{2^{*}_{k}-1}|V_{\xi_{j},\mu_{j}}\,dv_{g}\lesssim\\ 
&\(\frac{\mu_{i}\mu_{j}}{\mu_{i}^{2}+d_{g}(\xi_{i},\xi_{j})^{2}}\)^{\frac{n-2k}{2}}\[(\mu_{i}^{4}+\delta^{4})+\delta^{n-2k}+\(\frac{\mu_{i}}{\delta}\)^{2k}\].
\end{align*}
\end{proof}

\begin{remark}
As in the earlier estimate, the first term in \eqref{int.3} is not present if $(M,g)$ is locally conformally flat. And similarly
\begin{align*}
&\left|~\int_{M}(P_{g}U_{\xi_{i},\mu_{i}}-U_{\xi_{i},\mu_{i}}^{2^{*}_{k}-1})U_{\xi_{j},\mu_{j}}\,dv_{g}\right|\lesssim\(\frac{\mu_{i}}{\mu_{j}}+\frac{d_{g}(\xi_{i},\xi_{j})}{\mu_{i}\mu_{j}}\)^{\frac{2k-n}{2}}\(\mu_{i}^{4}+\delta^{4}\). 
\end{align*}
\end{remark}
\bigskip

Consider a colllection of $d\in\N$ points $\xi_{i}\in M$, $1\le i\le d$, and $d\in\N$ scales $0<\mu_{i}<\delta$, $1\le i\le d$ such that the error estimate \eqref{error1} holds. Before proceeding further,  we define the following quantities for $i\ne j$.
\begin{align}\label{int.quant}
&\mathfrak{L}_{ij}:=\int_{M}V_{\xi_{j},\mu_{j}}\,P_{g}V_{\xi_{i},\mu_{i}}\,dv_{g},~\mathcal{Q}_{ij}:=\int_{M}V_{\xi_{i}\mu_{i}}^{\,2^{*}_{k}-1}\,V_{\xi_{j},\mu_{j}}\,dv_{g}~\hbox{ and }\notag\\
&\varepsilon_{ij}:=\(\frac{\mu_{i}}{\mu_{j}}+\frac{\mu_{j}}{\mu_{i}}+\mathfrak{c}_{n,k}^{-1}\frac{(b_{n,k}^{-1}G_{g}(\xi_{i},\xi_{j}))^{\frac{2}{2k-n}}}{\mu_{i}\mu_{j}}\)^{\frac{2k-n}{2}},
\end{align}
with the convention that $G_{g}(\xi,\xi)^{{2}/{2k-n}}:=0$. Note that $\mathfrak{L}_{ji}=\mathfrak{L}_{ij}$ (by the self-adjointness of $P_{g}$) and $\varepsilon_{ji}=\varepsilon_{ij}$ by the symmetry of the Green's function. Also, by the bounds on the Green's function \eqref{bounds.Green} one has
\begin{align*}
\varepsilon_{ij}\approx\(\frac{\mu_{i}}{\mu_{j}}+\frac{\mu_{j}}{\mu_{i}}+\mathfrak{c}_{n,k}^{-1}\frac{d_{g}(\xi_{i},\xi_{j})^{2}}{\mu_{i}\mu_{j}}\)^{\frac{2k-n}{2}} .
\end{align*}
We first note the following.
\begin{corollary} 
Let $k\ge1$ be an integer, and $\(M,g\)$ be a smooth, closed Riemannian manifold of dimension $2k+1\le n\le 2k+3$ or $(M,g) $ be locally conformally flat. Assume that the GJMS operator $P_{g}$ satisfies \eqref{positivity}, and fix  $0<\delta<1$. Then for all $\xi_{i}, \xi_{j}\in M$ and $0<\mu_{j}\leq\mu_{i}<\delta$ sufficiently small, and such that the error estimate \eqref{error1} holds, we obtain the following relation.
\begin{align}\label{int.4}
&\mathfrak{L}_{ij}= \mathcal{Q}_{ij}+\varepsilon_{ij}\bigO\(\mu_{i}^{4}+\delta^{n-2k}+\(\frac{\mu_{i}}{\delta}\)^{2k}\),
\end{align}
with the constants in $\bigO(\cdot)$ depending only on $n,k$ and $(M,g)$.
\end{corollary}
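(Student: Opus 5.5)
The relation \eqref{int.4} is essentially a repackaging of the interaction estimate \eqref{int.3}, so the plan is to subtract and compare the two prefactors. By definition,
\begin{align*}
\mathfrak{L}_{ij}-\mathcal{Q}_{ij}=\int_{M}\big(P_{g}V_{\xi_{i},\mu_{i}}-V_{\xi_{i},\mu_{i}}^{2^{*}_{k}-1}\big)V_{\xi_{j},\mu_{j}}\,dv_{g},
\end{align*}
and this is exactly the integral bounded in \eqref{int.3}. That estimate gives
\begin{align*}
|\mathfrak{L}_{ij}-\mathcal{Q}_{ij}|\lesssim\Big(\frac{\mu_{i}}{\mu_{j}}+\frac{d_{g}(\xi_{i},\xi_{j})}{\mu_{i}\mu_{j}}\Big)^{\frac{2k-n}{2}}\Big(\mu_{i}^{4}+\delta^{n-2k}+\Big(\frac{\mu_{i}}{\delta}\Big)^{2k}\Big).
\end{align*}
Strictly, the last line of the proof of \eqref{int.3} has the prefactor $\big(\frac{\mu_{i}\mu_{j}}{\mu_{i}^{2}+d_{g}(\xi_{i},\xi_{j})^{2}}\big)^{\frac{n-2k}{2}}$ and a $\delta^{4}$ term; the latter is absorbed into the bracket using $\delta^4\le\delta^{n-2k}$ when $n-2k\le 3$, or simply dropped in the locally conformally flat case, where the core term is absent. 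I would therefore work directly with this prefactor.

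The remaining step is to show this prefactor is $\lesssim\varepsilon_{ij}$ when $\mu_{j}\le\mu_{i}$. By \eqref{bounds.Green}, $\varepsilon_{ij}\approx\big(\frac{\mu_{i}}{\mu_{j}}+\frac{\mu_{j}}{\mu_{i}}+\frac{d^{2}}{\mu_{i}\mu_{j}}\big)^{\frac{2k-n}{2}}$ with $d=d_{g}(\xi_{i},\xi_{j})$. Since $\mu_{j}\le\mu_{i}$, we have $\frac{\mu_{j}}{\mu_{i}}\le 1\le\frac{\mu_{i}}{\mu_{j}}$, so the middle term can be dropped at the cost of a factor $2$. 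This gives $\varepsilon_{ij}\approx\big(\frac{\mu_{i}^{2}+d^{2}}{\mu_{i}\mu_{j}}\big)^{\frac{2k-n}{2}}$, which is exactly the prefactor. The ordering assumption $\mu_{j}\le\mu_{i}$ is what lets the $i$-dependent error $\mu_i^4+(\mu_i/\delta)^{2k}$ sit alongside the symmetric quantity $\varepsilon_{ij}$.

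The only real obstacle is bookkeeping: making sure every piece $\mathscr{I}_{\text{C}},\mathscr{I}_{\text{N}},\mathscr{I}_{\text{E}}$ in the proof of \eqref{int.3} is controlled by the prefactor in the form $\mu_i\mu_j/(\mu_i^2+d^2)$ rather than the displayed $\frac{\mu_i}{\mu_j}+\frac{d}{\mu_i\mu_j}$. The proof outputs this form, so I would cite it as such, after which the corollary follows with constants depending only on $n,k$ and $(M,g)$.
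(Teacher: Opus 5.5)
Your proposal is correct and follows the paper's proof. As there, you write $\mathfrak{L}_{ij}-\mathcal{Q}_{ij}$ as the integral bounded in \eqref{int.3}, then use $\mu_{j}\le\mu_{i}$ together with \eqref{bounds.Green} to bound the prefactor by $\varepsilon_{ij}$. You are also more careful than the paper's two-line argument on two bookkeeping points: you use the squared-distance prefactor that the proof of \eqref{int.3} actually produces, and you absorb the stray $\delta^{4}$ term, via $n-2k\le 3$ or because the core term is absent in the locally conformally flat case.
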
 
\begin{proof}
We write 
We have $\mathfrak{L}_{ij}=\displaystyle \int_{M}(P_{g}V_{\xi_{i},\mu_{i}}-\,V_{\xi_{i},\mu_{i}}^{2^{*}_{k}-1})V_{\xi_{j},\mu_{j}}\,dv_{g}+ \mathcal{Q}_{ij}$. From \eqref{int.3} and since $\mu_{j}\leq\mu_{i}$ we have
\begin{align*}
\left|\,\mathfrak{L}_{ij}- \mathcal{Q}_{ij}\right|&\,\lesssim\(\frac{\mu_{i}}{\mu_{j}}+\frac{d_{g}(\xi_{i},\xi_{j})}{\mu_{i}\mu_{j}}\)^{\frac{2k-n}{2}}\(\mu_{i}^{4}+\delta^{n-2k}+\(\frac{\mu_{i}}{\delta}\)^{2k}\)\notag\\
&\lesssim\varepsilon_{ij}\(\mu_{i}^{4}+\delta^{n-2k}+\(\frac{\mu_{i}}{\delta}\)^{2k}\).
\end{align*}
\end{proof}
\medskip

\noindent
Next, we obtain the following crucial reation between the interaction quantities. 
\begin{lemma} 
Let $k\ge1$ be an integer, and $\(M,g\)$ be a smooth, closed Riemannian manifold of dimension $2k+1\le n\le 2k+3$ or $(M,g) $ be locally conformally flat. Assume that the GJMS operator $P_{g}$ satisfies \eqref{positivity}, and let $0<\delta<1$. Then for all $\xi_{i},\xi_{j}\in M$ and $0<\mu_{i}\leq\mu_{j}<\delta$ sufficiently small, and such that the error estimate \eqref{error1} holds, we have the following estimate as $\varepsilon_{ij}\to0$ for $i\ne j$.
\begin{align}\label{int.5}
\mathcal{Q}_{ij}=\varepsilon_{ij}\|B_{0}\|_{2^{*}_{k}-1}^{2^{*}_{k}-1}\,+\bigO\Bigg(\frac{\mu_{i}}{\mu_{j}}\Big(\varepsilon_{ij}^{\frac{2}{n-2k}}+\varepsilon_{ij}^{\frac{2}{n-2k}}\log\varepsilon_{ij}^{-1}\,\mathds{1}_{\{k=1\}}\Big)+\delta+\mu_{j}^{2}\,\delta^{-2}\Bigg),
\end{align}
with the constants in $\bigO(\cdot)$ depending only on $n,k$ and $(M,g)$. Here $B_{0}$ is the canonical bubble defined in \eqref{bubble1}. 
\end{lemma}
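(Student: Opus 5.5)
We localize $\mathcal{Q}_{ij}=\int_M V_{\xi_i,\mu_i}^{2^*_k-1}V_{\xi_j,\mu_j}\,dv_g$ around the more concentrated bubble $\xi_i$ (recall $\mu_i\le\mu_j$). Write $V_{\xi_i,\mu_i}^{2^*_k-1}\,dv_g=\Lambda_{\xi_i}^{2^*_k-1}\widetilde V_{\xi_i,\mu_i}^{2^*_k-1}\Lambda_{\xi_i}^{-2^*_k}dv_{g_{\xi_i}}=\Lambda_{\xi_i}^{-1}\widetilde V_{\xi_i,\mu_i}^{2^*_k-1}dv_{g_{\xi_i}}$. Split $M$ into $B_{g_{\xi_i}}(\xi_i,\delta)$ and its complement. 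On the complement, $\widetilde V_{\xi_i,\mu_i}\lesssim \mu_i^{\frac{n-2k}{2}}d_{g_{\xi_i}}(x,\xi_i)^{2k-n}$, so $\widetilde V_{\xi_i,\mu_i}^{2^*_k-1}\lesssim\mu_i^{\frac{n+2k}{2}}d^{-n-2k}$. Integrating against the pointwise bound for $V_{\xi_j,\mu_j}$ used in the proof of \eqref{int.3}, exactly as for $\mathscr{I}_{\text E}$, gives a contribution $\lesssim\varepsilon_{ij}(\mu_i/\delta)^{2k}$, which is absorbed in the error.

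On $B_{g_{\xi_i}}(\xi_i,\delta)$ we have $\widetilde V_{\xi_i,\mu_i}=B_{\xi_i,\mu_i}$ and the volume form is Euclidean up to $\bigO(|x|^N)$ by \eqref{conf.2}. Pass to $g_{\xi_i}$-normal coordinates and rescale $x=\mu_i y$. Then $B_{\xi_i,\mu_i}^{2^*_k-1}dx=\mu_i^{\frac{n-2k}{2}}B_0(y)^{2^*_k-1}dy$, and the integral becomes
\[
\int_{B(0,\delta/\mu_i)}B_0(y)^{2^*_k-1}\,\mu_i^{\frac{n-2k}{2}}\bigl(\Lambda_{\xi_i}^{-1}V_{\xi_j,\mu_j}\bigr)(\exp_{\xi_i}^{g_{\xi_i}}(\mu_i y))\,dy .
\]
Use the key observation that $V_{\xi_j,\mu_j}$ is globally comparable to $\mu_j^{\frac{n-2k}{2}}\mathfrak{c}_{n,k}^{\frac{n-2k}{2}}b_{n,k}^{-1}\Lambda_{\xi_j}G_{g_{\xi_j}}(\cdot,\xi_j)=\mathfrak{c}_{n,k}^{\frac{n-2k}{2}}b_{n,k}^{-1}\mu_j^{\frac{n-2k}{2}}\Lambda_{\xi_j}(\xi_j)^{-1}G_g(\cdot,\xi_j)$ away from $\xi_j$, and to a rescaled bubble near $\xi_j$. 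More precisely, introduce the model
\[
W(x):=\mu_j^{\frac{n-2k}{2}}\Bigl(\mu_j^2+\mathfrak{c}_{n,k}^{-1}\bigl(b_{n,k}^{-1}G_g(x,\xi_j)\bigr)^{\frac{2}{2k-n}}\Bigr)^{\frac{2k-n}{2}}.
\]
By \eqref{expan.green1}, \eqref{expan.green2}, \eqref{conf.1} and $\Lambda_{\xi_j}(\xi_j)=1$, it satisfies $V_{\xi_j,\mu_j}=W\,(1+\bigO(d_g(x,\xi_j)+\mu_j^2\delta^{-2}))$ on $B(\xi_j,\delta)$. Outside that ball the Green-function part matches $W$ up to relative error $\bigO(\mu_j^2 d^{-2})\le\bigO(\mu_j^2\delta^{-2})$. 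The factors $\Lambda_{\xi_i}^{-1}$ at $\exp(\mu_iy)$ contribute $1+\bigO(\mu_i^2|y|^2)$, since $\nabla\Lambda_{\xi_i}(\xi_i)=0$. The leading term is therefore $\mu_i^{\frac{n-2k}{2}}W(\xi_i)\int B_0^{2^*_k-1}$. A direct check gives $\mu_i^{\frac{n-2k}{2}}W(\xi_i)=\bigl(\mu_i/\mu_j+\mathfrak{c}^{-1}_{n,k}(b_{n,k}^{-1}G_g)^{\frac2{2k-n}}/(\mu_i\mu_j)\bigr)^{\frac{2k-n}{2}}$. This equals $\varepsilon_{ij}$ up to the missing $\mu_j/\mu_i$ term, and including that term costs only $\bigO(\varepsilon_{ij}\,\mu_i^2/\mu_j^2)$, which is harmless.

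The main obstacle is the remainder $\int B_0^{2^*_k-1}(y)\,\bigl[\mu_i^{\frac{n-2k}{2}}W(\exp(\mu_iy))-\mu_i^{\frac{n-2k}{2}}W(\xi_i)\bigr]dy$. We bound it by splitting $y$ according to whether $\mu_i|y|\le\frac12\max(\mu_j,d_g(\xi_i,\xi_j))$ or not. In the first region we Taylor-expand $W$, using $|\nabla W|\lesssim W/(\mu_j+d)$. This yields $\varepsilon_{ij}\,\mu_i\int B_0^{2^*_k-1}|y|\,dy/(\mu_j+d)$, truncated at $|y|\lesssim(\mu_j+d)/\mu_i$. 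The truncated moment integral behaves like $R^{1-2k-\,?}$: since $B_0^{2^*_k-1}\sim|y|^{-n-2k}$, the first moment converges for $k\ge1$, except in borderline cases where a second-order (symmetric) expansion is needed to kill the linear term by oddness. After the symmetry cancellation one gets a factor $(\mu_i/(\mu_j+d))^{2}\min(1,\dots)$, producing the term $\frac{\mu_i}{\mu_j}\varepsilon_{ij}^{\frac{2}{n-2k}}$. For $k=1$ the tail moment $\int^{R}|y|^{2-n-2}\,|y|^{n-1}d|y|$ diverges logarithmically, which produces the $\log\varepsilon_{ij}^{-1}$ term. In the second region we use $B_0^{2^*_k-1}\lesssim(\mu_i/(\mu_j+d))^{n+2k}$ together with $\int W\lesssim$ local integrability of $W$, giving a contribution of the same order. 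Finally, the curvature and Green-function corrections collected above contribute at most $\varepsilon_{ij}\bigO(\delta+\mu_j^2\delta^{-2})$; since $\varepsilon_{ij}\le1$, this is $\bigO(\delta+\mu_j^2\delta^{-2})$. Collecting all terms yields \eqref{int.5}.
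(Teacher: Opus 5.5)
Your route is the paper's route. You localize at the more concentrated bubble $\xi_i$ and rescale by $\mu_i$. You replace $V_{\xi_j,\mu_j}$ by a Green's-function model up to a relative error $\bigO(\delta+\mu_j^2\delta^{-2})$. You expand to second order on a centred ball of radius comparable to $\max(\mu_j,D)/\mu_i$, with $D:=d_g(\xi_i,\xi_j)$; this is the paper's set $\mathcal{T}_{\epsilon}$. The linear term is killed by oddness, a logarithm appears for $k=1$, and the complement is bounded separately. Building $W$ directly from $G_g$ is a genuine simplification: it lands on $G_g(\xi_i,\xi_j)$ at once, instead of the factor $\Lambda_{\xi_j}(\xi_i)$ and $G_{g_{\xi_j}}(\xi_i,\xi_j)$ in the paper's leading term. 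Two steps, however, need repair.

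\textbf{Leading term.} One has $\mu_i^{\frac{n-2k}{2}}W(\xi_i)=(\frac{\mu_j}{\mu_i}+\mathfrak{c}_{n,k}^{-1}\frac{(b_{n,k}^{-1}G_g(\xi_i,\xi_j))^{\frac{2}{2k-n}}}{\mu_i\mu_j})^{\frac{2k-n}{2}}$. So the term missing from $\varepsilon_{ij}$ is the small one, $\mu_i/\mu_j$. Adding $\mu_j/\mu_i$, as you wrote, would not be harmless when $D\lesssim\mu_j$. Adding $\mu_i/\mu_j$ changes the value by a relative factor $1+\bigO(\frac{\mu_i}{\mu_j}\varepsilon_{ij}^{\frac{2}{n-2k}})$, exactly the size of the first error term. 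Record this sharper factor rather than $\mu_i^2/\mu_j^2$, which is much weaker when $D\gg\mu_j$.

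\textbf{Far region.} This is the real gap. The far region is $\{|y|\ge R\}$ with $R:=\max(\mu_j,D)/(2\mu_i)$, and your description does not handle it. Suppose you bound $B_0^{2^{*}_{k}-1}$ there by its supremum, of order $(\mu_i/(\mu_j+D))^{n+2k}$. Suppose you bound $W$ by its integral over $B_{g}(\xi_i,\delta)$, of order $\mu_j^{\frac{n-2k}{2}}\delta^{2k}$ when $D\lesssim\delta$. The result is $\varepsilon_{ij}$ times $(\mu_i\delta/(\mu_j+D)^2)^{2k}$. This factor is unbounded, for instance when $\xi_i=\xi_j$, $\mu_i=\mu_j^{3/2}$ and $\mu_j\to0$. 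So this route does not even give $\mathcal{Q}_{ij}\lesssim\varepsilon_{ij}$.

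You must separate the part of the far region near $\xi_j$ from the rest. This is what you already did for the exterior region, and what the paper does for $\mathscr{I}^{\text{b}}_{ij}$.
On $\{d_g(x,\xi_j)\ge D/2\}$, use $W\lesssim\mu_j^{\frac{n-2k}{2}}(\mu_j+D)^{2k-n}$ together with the tail bound $\int_{|y|\ge R}B_0^{2^{*}_{k}-1}dy\lesssim R^{-2k}$.
Keep the supremum bound on $B_0^{2^{*}_{k}-1}$ only on $\{d_g(x,\xi_j)<D/2\}$, where $\int_{B_g(\xi_j,D/2)}W\,dv_g\lesssim\mu_j^{\frac{n-2k}{2}}D^{2k}$.
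Both pieces are $\varepsilon_{ij}\bigO((\mu_i/(\mu_j+D))^{2k})$. Since $(\mu_i/(\mu_j+D))^{2}\approx\frac{\mu_i}{\mu_j}\varepsilon_{ij}^{\frac{2}{n-2k}}$, they fit in the error.

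\textbf{Taylor region.} Your discussion here should also be cleaned up. The first moment of $B_0^{2^{*}_{k}-1}$ always converges, but the first-order bound only gives relative error $\mu_i/(\mu_j+D)$. That is the square root of $\frac{\mu_i}{\mu_j}\varepsilon_{ij}^{\frac{2}{n-2k}}$. The second-order expansion with the odd cancellation is what produces the stated error term in general; it is not a remedy reserved for borderline cases.
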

\begin{proof}
We write 
\begin{align*}
\mathcal{Q}_{ij}:=\int_{M}V_{\xi_{i},\mu_{i}}^{\,2^{*}_{k}-1}\,V_{\xi_{j},\mu_{j}}\,dv_{g}=\int_{B_{g_{\xi_{i}}}(\xi_{i},\delta)}+\int_{M\setminus B_{g_{\xi_{i}}}(\xi_{i},\delta)}
\end{align*}
Taking  $\delta$ sufficiently small, we can rewrite our bubbles in terms of the Green's function as:
\begin{align}\label{bubble4}
\widetilde{U}_{\xi,\mu}\(x\)&=\chi_{\xi,\delta}(x)\,\(\frac{\mu}{\mu^{2}+\mathfrak{c}^{-1}_{n,k}\big(b_{n,k}^{-1}G_{g_{\xi}}(x,\xi)\big)^{\frac{2}{2k-n}}}\)^{\frac{n-2k}{2}}(1+\bigO(\delta)),\notag\\
\widetilde{V}_{\xi,\mu}\(x\)&=\(\frac{\mu}{\mu^{2}+\mathfrak{c}^{-1}_{n,k}\big(b_{n,k}^{-1}G_{g_{\xi}}(x,\xi)\big)^{\frac{2}{2k-n}}}\)^{\frac{n-2k}{2}}\(1+\bigO(\delta)+\bigO\(\frac{\mu^{2}}{\delta^{2}}\)\).
\end{align}
\smallskip

\noindent
For the outer term, proceeding as in \eqref{Int:ext}, we obtain 
\begin{align}
\int_{M\setminus B_{g_{\xi_{i}}}(\xi_{i},\delta)}V_{\xi_{i},\mu_{i}}^{\,2^{*}_{k}-1}\,V_{\xi_{j},\mu_{j}}\,dv_{g}\lesssim\(\frac{\mu_{i}\mu_{j}}{\mu_{i}^{2}+d_{g}(\xi_{i},\xi_{j})^{2}}\)^{\frac{n-2k}{2}}\(\frac{\mu_{i}}{\delta}\)^{2k}.
\end{align}
\smallskip

\noindent
For the main term, we write 
$$\int_{B_{g_{\xi_{i}}}(\xi_{i},\delta)}V_{\xi_{i},\mu_{i}}^{\,2^{*}_{k}-1}\,V_{\xi_{j},\mu_{j}}\,dv_{g}=\(1+\bigO(\delta)+\bigO\bigg(\frac{\mu_{j}^{2}}{\delta^{2}}\bigg)\)\mathscr{I}_{ij},$$
where 
\begin{align}\label{q_{ij}eq1}
\mathscr{I}_{ij}:=&\bigintss_{B_{g_{\xi_{i}}}(\xi_{i},\delta)}\frac{\Lambda_{\xi_{j}}(x)}{\Lambda_{\xi_{i}}(x)}\(\frac{\mu_{i}}{\mu_{i}^{2}+\mathfrak{c}^{-1}_{n,k}d_{g_{\xi_{i}}}(x,\xi_{i})^{2}}\)^{\frac{n+2k}{2}}\notag\\
&\hspace{4cm}\times \(\frac{\mu_{j}}{\mu_{j}^{2}+\mathfrak{c}^{-1}_{n,k}\big(b_{n,k}^{-1}G_{g_{\xi_{j}}}(x,,\xi_{j})\big)^{\frac{2}{2k-n}}}\)^{\frac{n-2k}{2}}dv_{g_{\xi_{i}}}\notag\\
&=\bigintss_{B(0,\delta/\mu_{i})}\frac{\Lambda_{\xi_{j}}(\exp_{\xi_{i}}^{g_{\xi_{i}}}(\mu_{i}x))}{\Lambda_{\xi_{i}}(\exp_{\xi_{i}}^{g_{\xi_{i}}}(\mu_{i}x))}\(\frac{1}{1+\mathfrak{c}^{-1}_{n,k}|x|^{2}}\)^{\frac{n+2k}{2}}\notag\\
&\hspace{3cm}\times \(\,\dfrac{\mu_{j}}{\mu_{i}}+\mathfrak{c}^{-1}_{n,k}\dfrac{\big(b_{n,k}^{-1}G_{g_{\xi_{j}}}\big(\exp_{\xi_{i}}^{g_{\xi_{i}}}(\mu_{i}x),\xi_{j}\big)\big)^{\frac{2}{2k-n}}}{\mu_{i}\mu_{j}}\)^{\frac{2k-n}{2}}dx.
\end{align}
For convinience we let $\mathscr{K}_{ji}(x):=\big(b_{n,k}^{-1}G_{g_{\xi_{j}}}(\exp_{\xi_{i}}^{g_{\xi_{i}}}(x),\xi_{j})\big)^{2/(2k-n)}$. From the expansion of the Green's function \eqref{expan.green1} we have for $\xi_{i}\neq \xi_{j}$
\begin{align}
\big(b_{n,k}^{-1}G_{g_{\xi_{j}}}\big(\exp_{\xi_{i}}^{g_{\xi_{i}}}(\mu_{i}x),\xi_{j}\big)\big)^{\frac{2}{2k-n}}=&\,\big(b_{n,k}^{-1}G_{g_{\xi_{j}}}(\xi_{i},\xi_{j})\big)^{\frac{2}{2k-n}}\notag\\
&+\mu_{i}\left\langle\nabla\mathscr{K}_{ji}(0), x\right\rangle+\bigO\Big(\mu_{i}^{2}|x|^{2} \underbrace{|\nabla^{2}\mathscr{K}_{ji}(\mu_{i}x)|}_{\le1 \text{ a.e }}\Big).
\end{align}
Note, by \eqref{bounds.Green} we indeed have $\big(b_{n,k}^{-1}G_{g_{\xi}}\big(\exp_{\xi}^{g_{\xi}}(\mu x),\xi \big)\big)^{\frac{2}{2k-n}}=\bigO((\mu ^{2}|x|^{2})$, and the bound $|\nabla\mathscr{K}_{ji}(0)|\lesssim\big(b_{n,k}^{-1}G_{g_{\xi_{j}}}(\xi_{i},\xi_{j})\big)^{\frac{1}{2k-n}}$ for $\xi_{i}\ne\xi_{j}$. Then 
\begin{align*}
&\frac{\mu_{j}}{\mu_{i}}+\mathfrak{c}^{-1}_{n,k}\dfrac{\big(b_{n,k}^{-1}G_{g_{\xi_{j}}}\big(\exp_{\xi_{i}}^{g_{\xi_{i}}}(\mu_{i}x),\xi_{j}\big)\big)^{\frac{2}{2k-n}}}{\mu_{i}\mu_{j}}=\(\frac{\mu_{j}}{\mu_{i}}+\mathfrak{c}^{-1}_{n,k}\frac{\big(b_{n,k}^{-1}G_{g_{\xi_{j}}}(\xi_{i},\xi_{j})\big)^{\frac{2}{2k-n}}}{\mu_{i}\mu_{j}}\)\\
&\qquad\times\(1+\frac{\left\langle\nabla\mathscr{K}_{ji}(0),x/\mu_{j}\right\rangle+ \bigO\bigg(\dfrac{\mu_{i}}{\mu_{j}}|x|^{2}\bigg)}{\dfrac{\mu_{j}}{\mu_{i}}+\mathfrak{c}^{-1}_{n,k}\dfrac{\big(b_{n,k}^{-1}G_{g_{\xi_{j}}}(\xi_{i},\xi_{j})\big)^{\frac{2}{2k-n}}}{\mu_{i}\mu_{j}}}\).
\end{align*}

\noindent
Fix $\epsilon>0$ small and set $\mathcal{T}_{\epsilon}:=\left\{x\in\R^{n}:|x|\leq \epsilon \(\dfrac{\mu_{j}}{\mu_{i}}+\dfrac{\big(b_{n,k}^{-1}G_{g_{\xi_{j}}}(\xi_{i},\xi_{j})\big)^{\frac{1}{2k-n}}}{\mu_{i}}\)\right\}$. 
Then  by a Taylor expansion for $x\in \mathcal{T}_{\epsilon}$ we obtain 
\begin{align}\label{q_{ij}eq2}
&\(\,\frac{\mu_{j}}{\mu_{i}}+\mathfrak{c}^{-1}_{n,k}\dfrac{\big(b_{n,k}^{-1}G_{g_{\xi_{j}}}\big(\exp_{\xi_{i}}^{g_{\xi_{i}}}(\mu_{i}x),\xi_{j}\big)\big)^{\frac{2}{2k-n}}}{\mu_{i}\mu_{j}}\)^{\frac{2k-n}{2}}=\notag\\
&\(\frac{\mu_{j}}{\mu_{i}}+\mathfrak{c}^{-1}_{n,k}\frac{\big(b_{n,k}^{-1}G_{g_{\xi_{j}}}(\xi_{i},\xi_{j})\big)^{\frac{2}{2k-n}}}{\mu_{i}\mu_{j}}\)^{\frac{2k-n}{2}}\(1+\frac{\left\langle\nabla\mathscr{K}_{ji}(0),x/\mu_{j}\right\rangle+ \bigO\bigg(\dfrac{\mu_{i}}{\mu_{j}}|x|^{2}\bigg)}{\dfrac{\mu_{j}}{\mu_{i}}+\mathfrak{c}^{-1}_{n,k}\dfrac{\big(b_{n,k}^{-1}G_{g_{\xi_{j}}}(\xi_{i},\xi_{j})\big)^{\frac{2}{2k-n}}}{\mu_{i}\mu_{j}}}\).
\end{align}
We now break the integral as:  
$$\mathscr{I}_{ij}=\int_{B(0,\delta/\mu_{i})\cap \mathcal{T}_{\epsilon}}+\int_{B(0,\delta/\mu_{i})\setminus \mathcal{T}_{\epsilon}}:=\mathscr{I}_{ij}^{a}+\mathscr{I}_{ij}^{b}.$$
\smallskip

\noindent
From \eqref{q_{ij}eq2} it follows that 
\begin{align*}
&\mathscr{I}^{\text{a}}_{ij}=\(\frac{\mu_{j}}{\mu_{i}}+\mathfrak{c}^{-1}_{n,k}\frac{\big(b_{n,k}^{-1}G_{g_{\xi_{j}}}(\xi_{i},\xi_{j})\big)^{\frac{2}{2k-n}}}{\mu_{i}\mu_{j}}\)^{\frac{2k-n}{2}}\bigintss_{B(0,\delta/\mu_{i})\cap \mathcal{T}_{\epsilon}} \frac{\Lambda_{\xi_{j}}(\exp_{\xi_{i}}^{g_{\xi_{i}}}(\mu_{i}x))}{\Lambda_{\xi_{i}}(\exp_{\xi_{i}}^{g_{\xi_{i}}}(\mu_{i}x))}\notag\\
&~\times \(\frac{1}{1+\mathfrak{c}^{-1}_{n,k}|x|^{2}}\)^{\frac{n+2k}{2}}\(1+\frac{\left\langle\nabla\mathscr{K}_{ji}(0),x/\mu_{j}\right\rangle+ \bigO\bigg(\dfrac{\mu_{i}}{\mu_{j}}|x|^{2}\bigg)}{\dfrac{\mu_{j}}{\mu_{i}}+\mathfrak{c}^{-1}_{n,k}\dfrac{\big(b_{n,k}^{-1}G_{g_{\xi_{j}}}(\xi_{i},\xi_{j})\big)^{\frac{2}{2k-n}}}{\mu_{i}\mu_{j}}}\)\,dx.
\end{align*}
Using $\eqref{conf.1}$ we obtain 
\begin{align}\label{q_{ij}eq3}
&\mathscr{I}^{\text{a}}_{ij}=\(\frac{\mu_{j}}{\mu_{i}}+\mathfrak{c}^{-1}_{n,k}\frac{\big(b_{n,k}^{-1}G_{g_{\xi_{j}}}(\xi_{i},\xi_{j})\big)^{\frac{2}{2k-n}}}{\mu_{i}\mu_{j}}\)^{\frac{2k-n}{2}}\Lambda_{\xi_{j}}(\xi_{i})\bigintss_{B(0,\delta/\mu_{i})\cap \mathcal{T}_{\epsilon}} \big(1+\bigO(\mu_{i}|x|)\big)\notag\\
&\times \(\frac{1}{1+\mathfrak{c}^{-1}_{n,k}|x|^{2}}\)^{\frac{n+2k}{2}}\(1+\frac{\left\langle\nabla\mathscr{K}_{ji}(0),x/\mu_{j}\right\rangle+ \bigO\bigg(\dfrac{\mu_{i}}{\mu_{j}}|x|^{2}\bigg)}{\dfrac{\mu_{j}}{\mu_{i}}+\mathfrak{c}^{-1}_{n,k}\dfrac{\big(b_{n,k}^{-1}G_{g_{\xi_{j}}}(\xi_{i},\xi_{j})\big)^{\frac{2}{2k-n}}}{\mu_{i}\mu_{j}}}\)\,dx\notag\\
=&\(\frac{\mu_{j}}{\mu_{i}}+\mathfrak{c}^{-1}_{n,k}\frac{\big(b_{n,k}^{-1}G_{g_{\xi_{j}}}(\xi_{i},\xi_{j})\big)^{\frac{2}{2k-n}}}{\mu_{i}\mu_{j}}\)^{\frac{2k-n}{2}}\Lambda_{\xi_{j}}(\xi_{i})\bigintss_{B(0,\delta/\mu_{i})\cap \mathcal{T}_{\epsilon}}\(\frac{1}{1+\mathfrak{c}^{-1}_{n,k}|x|^{2}}\)^{\frac{n+2k}{2}}\times\notag\\
&\(1+\frac{\left\langle\nabla\mathscr{K}_{ji}(0),x/\mu_{j}\right\rangle+ \bigO\bigg(\dfrac{\mu_{i}}{\mu_{j}}|x|^{2}\bigg)}{\dfrac{\mu_{j}}{\mu_{i}}+\mathfrak{c}^{-1}_{n,k}\dfrac{\big(b_{n,k}^{-1}G_{g_{\xi_{j}}}(\xi_{i},\xi_{j})\big)^{\frac{2}{2k-n}}}{\mu_{i}\mu_{j}}}\)\,dx+\(\frac{\mu_{j}}{\mu_{i}}+\mathfrak{c}^{-1}_{n,k}\frac{\big(b_{n,k}^{-1}G_{g_{\xi_{j}}}(\xi_{i},\xi_{j})\big)^{\frac{2}{2k-n}}}{\mu_{i}\mu_{j}}\)^{\frac{2k-n}{2}}\notag\\
&\times\bigO\(\mu_{i}\bigintss_{B(0,\delta/\mu_{i})\cap \mathcal{T}_{\epsilon}}\frac{|x|}{\(1+\mathfrak{c}^{-1}_{n,k}|x|^{2}\)^{\frac{n+2k}{2}}}\).
\end{align}
\smallskip

\noindent
We estimate each of the above terms. 
\begin{align*}
&\int_{B(0,\delta/\mu_{i})\cap \mathcal{T}_{\epsilon}} \(\frac{1}{1+\mathfrak{c}^{-1}_{n,k}|x|^{2}}\)^{\frac{n+2k}{2}}dx=\|B_{0}\|_{2^{*}_{k}-1}^{2^{*}_{k}-1}\,+\bigO\bigg(\frac{\mu_{i}^{\,2k}}{\delta^{\,2k}}\bigg)\\
&\hspace{2.5cm}+\,\(\dfrac{\mu_{j}}{\mu_{i}}+\dfrac{\big(b_{n,k}^{-1}G_{g_{\xi_{j}}}(\xi_{i},\xi_{j})\big)^{\frac{2}{2k-n}}}{\mu_{i}\mu_{j}}\)^{-k}\bigO\bigg(\frac{\mu_{i}^{\,k}}{\mu_{j}^{\,k}}\bigg).
\end{align*}
By symmetry
\begin{align*}
&\int_{B(0,\delta/\mu_{i})\cap \mathcal{T}_{\epsilon}}\left\langle\nabla\mathscr{K}_{ji}(0),x/\mu_{j}\right\rangle\(\frac{1}{1+\mathfrak{c}^{-1}_{n,k}|x|^{2}}\)^{\frac{n+2k}{2}}dx=0.
\end{align*}
And
\begin{align*}
\int_{B(0,\delta/\mu_{i})\cap\mathcal{T}_{\epsilon}}&\frac{|x|^{2}}{\(1+\mathfrak{c}^{-1}_{n,k}|x|^{2}\)^{\frac{n+2k}{2}}}\,dx\,\lesssim&\\
&\left\{\begin{aligned}\\&~1~&&\text{for } k\ge2,\\&\log\(2\Bigg(\frac{\mu_{j}}{\mu_{i}}+\mathfrak{c}^{-1}_{n,k}\frac{\big(b_{n,k}^{-1}G_{g_{\xi_{j}}}(\xi_{i},\xi_{j})\big)^{\frac{2}{2k-n}}}{\mu_{i}\mu_{j}}\Bigg)\)&&\text{for }k=1.\end{aligned}\right.
\end{align*}
\smallskip

\noindent
Plugging the above estimates in \eqref{q_{ij}eq3} yeilds 
\begin{align}\label{q_{ij}eq4}
&\mathscr{I}^{\text{a}}_{ij}=\(\frac{\mu_{j}}{\mu_{i}}+\mathfrak{c}^{-1}_{n,k}\frac{\big(b_{n,k}^{-1}G_{g_{\xi_{j}}}(\xi_{i},\xi_{j})\big)^{\frac{2}{2k-n}}}{\mu_{i}\mu_{j}}\)^{\frac{2k-n}{2}}\Bigg[\Lambda_{\xi_{j}}(\xi_{i})\|B_{0}\|_{2^{*}_{k}-1}^{2^{*}_{k}-1}+\bigO(\mu_{i})+\bigO\bigg(\frac{\mu_{i}^{\,2k}}{\delta^{\,2k}}\bigg)\notag\\
&+\Bigg(\frac{\mu_{j}}{\mu_{i}}+\frac{\big(b_{n,k}^{-1}G_{g_{\xi_{j}}}(\xi_{i},\xi_{j})\big)^{\frac{2}{2k-n}}}{\mu_{i}\mu_{j}}\Bigg)^{-k}\bigO\bigg(\frac{\mu_{i}^{\,k}}{\mu_{j}^{\,k}}\bigg)+\Bigg(\frac{\mu_{j}}{\mu_{i}}+\frac{\big(b_{n,k}^{-1}G_{g_{\xi_{j}}}(\xi_{i}\,\xi_{j})\big)^{\frac{2}{2k-n}}}{\mu_{i}\mu_{j}}\Bigg)^{-1}\notag\\
&\times\Bigg(\bigO\bigg(\frac{\mu_{i}}{\mu_{j}}\bigg)\mathds{1}_{\{k\geq2\}}+\log\Bigg(2\Bigg(\frac{\mu_{j}}{\mu_{i}}+\mathfrak{c}^{-1}_{n,k}\frac{\big(b_{n,k}^{-1}G_{g_{\xi_{j}}}(\xi_{i},\xi_{j})\big)^{\frac{2}{2k-n}}}{\mu_{i}\mu_{j}}\Bigg)\Bigg)\bigO\bigg(\frac{\mu_{i}}{\mu_{j}}\bigg)\mathds{1}_{\{k=1\}}\Bigg)\Bigg].\notag\\
\end{align}
\smallskip

\noindent
We now deal with the $\mathscr{I}^{\text{b}}_{ij}$ term. For the case $d_{g_{\xi_{i}}}(\xi_{i},\xi_{j})\geq 2\delta$, using the bounds on the Green's function \eqref{bounds.Green} and the  equivalence of metrics we obtain
\begin{align*}
&\mathscr{I}^{\text{b}}_{ij}\lesssim\bigintss_{B(0,\delta/\mu_{i})\setminus \mathcal{T}_{\epsilon}}\(\frac{1}{1+\mathfrak{c}^{-1}_{n,k}|x|^{2}}\)^{\frac{n+2k}{2}}\notag\\
&\hspace{4cm}\times\(\,\frac{\mu_{j}}{\mu_{i}}+\mathfrak{c}^{-1}_{n,k}\frac{\big(b_{n,k}^{-1}G_{g_{\xi_{j}}}(\exp_{\xi_{i}}^{g_{\xi_{i}}}(\mu_{i}x),\xi_{j})\big)^{\frac{2}{2k-n}}}{\mu_{i}\mu_{j}}\)^{\frac{2k-n}{2}}dx\notag\\
&\lesssim\(\frac{\mu_{j}}{\mu_{i}}+\mathfrak{c}^{-1}_{n,k}\frac{\big(b_{n,k}^{-1}G_{g_{\xi_{j}}}(\xi_{i},\xi_{j})\big)^{\frac{2}{2k-n}}}{\mu_{i}\mu_{j}}\)^{\frac{2k-n}{2}}\bigintss_{B(0,\delta/\mu_{i})\setminus \mathcal{T}_{\epsilon}}\(\frac{1}{1+\mathfrak{c}^{-1}_{n,k}|x|^{2}}\)^{\frac{n+2k}{2}}\notag\\
&\lesssim\(\frac{\mu_{j}}{\mu_{i}}+\mathfrak{c}^{-1}_{n,k}\frac{\big(b_{n,k}^{-1}G_{g_{\xi_{j}}}(\xi_{i},\xi_{j})\big)^{\frac{2}{2k-n}}}{\mu_{i}\mu_{j}}\)^{-n/2} \(\frac{\mu_{i}}{\mu_{j}}\)^{k}.
\end{align*}
Now assume that $d_{g_{\xi_{i}}}(\xi_{i},\xi_{j})\le 2\delta$. Denote $\rho_{ji}:=\(\dfrac{\mu_{j}}{\mu_{i}}+\dfrac{\big(b_{n,k}^{-1}G_{g_{\xi_{j}}}(\xi_{i},\xi_{j})\big)^{\frac{1}{2k-n}}}{\mu_{i}}\)$ and $\Theta_{\xi_{i}\xi_{j}}:=(\exp_{\xi_{i}}^{g_{\xi_{i}}})^{-1}(\xi_{j})$.  For an appropriately chosen large constant $C>0$, we obtain
\begin{align*}
&\mathscr{I}^{\text{b}}_{ij}\lesssim\bigintss_{B(0,\delta/\mu_{i})\setminus \mathcal{T}_{\epsilon}}\(\frac{1}{1+\mathfrak{c}^{-1}_{n,k}|x|^{2}}\)^{\frac{n+2k}{2}}\notag\\
&\hspace{4cm}\times\(\,\frac{\mu_{j}}{\mu_{i}}+\mathfrak{c}^{-1}_{n,k}\frac{\big(b_{n,k}^{-1}G_{g_{\xi_{j}}}(\exp_{\xi_{i}}^{g_{\xi_{i}}}(\mu_{i}x),\xi_{j})\big)^{\frac{2}{2k-n}}}{\mu_{i}\mu_{j}}\)^{\frac{2k-n}{2}}dx\notag\\
&\lesssim\bigintss_{\{\epsilon \rho_{ji}\le |x|\le 2C\rho_{ji}\}\cap B(0,\delta/\mu_{i})}+\bigintss_{\{|x|\ge C\rho_{ji}\}\cap B(0,\delta/\mu_{i})}
\end{align*}
\begin{align*}
&\lesssim\,(\epsilon\rho_{ji})^{-(n+2k)}\bigints_{\{|x-\Theta_{\xi_{i}\xi_{j}}/\mu_{i}|\le 4C\rho_{ji}\}}\(\frac{\mu_{j}/\mu_{i}}{{\mu_{j}^{2}}/{\mu_{i}^{2}}+\Big|x-\frac{\Theta_{\xi_{i}\xi_{j}}}{\mu_{i}}\Big|^{2}}\)^{\frac{n-2k}{2}}+\notag\\
&~\(\frac{\mu_{j}}{\mu_{i}}+\mathfrak{c}^{-1}_{n,k}\frac{\big(b_{n,k}^{-1}G_{g_{\xi_{j}}}(\xi_{i},\xi_{j})\big)^{\frac{2}{2k-n}}}{\mu_{i}\mu_{j}}\)^{\frac{2k-n}{2}}\bigints_{|x|\ge C\rho_{ji}}\frac{1}{\(1+\mathfrak{c}^{-1}_{n,k}|x|^{2}\)^{\frac{n+2k}{2}}}\,dx\notag\\
&\lesssim (\epsilon\rho_{ji})^{-n}\(\frac{\mu_{j}}{\mu_{i}}\)^{\frac{n-2k}{2}}+\(\frac{\mu_{j}}{\mu_{i}}+\mathfrak{c}^{-1}_{n,k}\frac{\big(b_{n,k}^{-1}G_{g_{\xi_{j}}}(\xi_{i},\xi_{j})\big)^{\frac{2}{2k-n}}}{\mu_{i}\mu_{j}}\)^{\frac{2k-n}{2}}\rho_{ji}^{-2k}\notag\\
&\lesssim\(\frac{\mu_{j}}{\mu_{i}}+\mathfrak{c}^{-1}_{n,k}\frac{\big(b_{n,k}^{-1}G_{g_{\xi_{j}}}(\xi_{i},\xi_{j})\big)^{\frac{2}{2k-n}}}{\mu_{i}\mu_{j}}\)^{-n/2} \(\frac{\mu_{i}}{\mu_{j}}\)^{k}.
\end{align*}
Thus
\begin{align}\label{q_{ij}eq5}
\mathscr{I}^{\text{b}}_{ij}\lesssim\(\frac{\mu_{j}}{\mu_{i}}+\mathfrak{c}^{-1}_{n,k}\frac{\big(b_{n,k}^{-1}G_{g_{\xi_{j}}}(\xi_{i},\xi_{j})\big)^{\frac{2}{2k-n}}}{\mu_{i}\mu_{j}}\)^{-n/2} \(\frac{\mu_{i}}{\mu_{j}}\)^{k}.
\end{align}
\smallskip

\noindent
Combining \eqref{q_{ij}eq4} and \eqref{q_{ij}eq5} we get that 
\begin{align}
&\mathscr{I}_{ij}=\(\frac{\mu_{j}}{\mu_{i}}+\mathfrak{c}^{-1}_{n,k}\frac{\big(b_{n,k}^{-1}G_{g_{\xi_{j}}}(\xi_{i},\xi_{j})\big)^{\frac{2}{2k-n}}}{\mu_{i}\mu_{j}}\)^{\frac{2k-n}{2}}\Bigg[\Lambda_{\xi_{j}}(\xi_{i})\|B_{0}\|_{2^{*}_{k}-1}^{2^{*}_{k}-1}+\bigO(\mu_{i})+\bigO\bigg(\frac{\mu_{i}^{\,2k}}{\delta^{\,2k}}\bigg)\notag\\
&+\Bigg(\frac{\mu_{j}}{\mu_{i}}+\frac{\big(b_{n,k}^{-1}G_{g_{\xi_{j}}}(\xi_{i},\xi_{j})\big)^{\frac{2}{2k-n}}}{\mu_{i}\mu_{j}}\Bigg)^{-k}\bigO\bigg(\frac{\mu_{i}^{\,k}}{\mu_{j}^{\,k}}\bigg)+\Bigg(\frac{\mu_{j}}{\mu_{i}}+\frac{\big(b_{n,k}^{-1}G_{g_{\xi_{j}}}(\xi_{i}\,\xi_{j})\big)^{\frac{2}{2k-n}}}{\mu_{i}\mu_{j}}\Bigg)^{-1}\notag\\
&\times\Bigg(\bigO\bigg(\frac{\mu_{i}}{\mu_{j}}\bigg)\mathds{1}_{\{k\geq2\}}+\log\Bigg(2\Bigg(\frac{\mu_{j}}{\mu_{i}}+\mathfrak{c}^{-1}_{n,k}\frac{\big(b_{n,k}^{-1}G_{g_{\xi_{j}}}(\xi_{i},\xi_{j})\big)^{\frac{2}{2k-n}}}{\mu_{i}\mu_{j}}\Bigg)\Bigg)\bigO\bigg(\frac{\mu_{i}}{\mu_{j}}\bigg)\mathds{1}_{\{k=1\}}\Bigg)\Bigg].\notag\\
\end{align}
Consequently
\begin{align}\label{q_{ij}eq6}
&\mathcal{Q}_{ij}=\(\frac{\mu_{j}}{\mu_{i}}+\mathfrak{c}^{-1}_{n,k}\frac{\big(b_{n,k}^{-1}G_{g_{\xi_{j}}}(\xi_{i},\xi_{j})\big)^{\frac{2}{2k-n}}}{\mu_{i}\mu_{j}}\)^{\frac{2k-n}{2}}\Bigg[\Lambda_{\xi_{j}}(\xi_{i})\|B_{0}\|_{2^{*}_{k}-1}^{2^{*}_{k}-1}+\bigO(\mu_{i})\notag\\
&~+\bigO(\delta)+\bigO\Bigg(\frac{\mu_{j}^{2}}{\delta^{2}}\Bigg)+\bigO\bigg(\frac{\mu_{i}^{\,2k}}{\delta^{\,2k}}\bigg)+\Bigg(\frac{\mu_{j}}{\mu_{i}}+\frac{\big(b_{n,k}^{-1}G_{g_{\xi_{j}}}(\xi_{i},\xi_{j})\big)^{\frac{2}{2k-n}}}{\mu_{i}\mu_{j}}\Bigg)^{-k}\bigO\bigg(\frac{\mu_{i}^{\,k}}{\mu_{j}^{\,k}}\bigg)\notag\\
&~+\Bigg(\frac{\mu_{j}}{\mu_{i}}+\frac{\big(b_{n,k}^{-1}G_{g_{\xi_{j}}}(\xi_{i}\,\xi_{j})\big)^{\frac{2}{2k-n}}}{\mu_{i}\mu_{j}}\Bigg)^{-1}\Bigg(\bigO\bigg(\frac{\mu_{i}}{\mu_{j}}\bigg)\mathds{1}_{\{k\geq2\}}\,+\notag\\
&\qquad\log\Bigg(2\Bigg(\frac{\mu_{j}}{\mu_{i}}\mathfrak{c}^{-1}_{n,k}\frac{\big(b_{n,k}^{-1}G_{g_{\xi_{j}}}(\xi_{i},\xi_{j})\big)^{\frac{2}{2k-n}}}{\mu_{i}\mu_{j}}\Bigg)\Bigg)\bigO\bigg(\frac{\mu_{i}}{\mu_{j}}\bigg)\mathds{1}_{\{k=1\}}\Bigg)\Bigg].
\end{align}
\smallskip

\noindent
When  $\varepsilon_{ij}\to0$ we thus obtain
\begin{align*}
&\mathcal{Q}_{ij}=\varepsilon_{ij}\|B_{0}\|_{2^{*}_{k}-1}^{2^{*}_{k}-1}\Bigg[1+\varepsilon_{ij}^{\frac{2}{n-2k}}\,\bigO\bigg(\frac{\mu_{i}}{\mu_{j}}\bigg)-\varepsilon_{ij}^{\frac{2}{n-2k}}\log\varepsilon_{ij}\,\bigO\bigg(\frac{\mu_{i}}{\mu_{j}}\bigg)\mathds{1}_{\{k=1\}}\\
&~+\varepsilon_{ij}^{\frac{2k}{n-2k}}\,\bigO\bigg(\frac{\mu_{i}^{k}}{\mu_{j}^{k}}\bigg)+\bigO\bigg(\frac{\mu_{i}^{\,2k}}{\delta^{\,2k}}\bigg)+\bigO\bigg(\frac{\mu_{j}^{2}}{\delta^{2}}\bigg)+\bigO(\delta)\Bigg].
\end{align*}
\medskip

\noindent
We note that \eqref{q_{ij}eq6} yields 
\begin{align*}
\mathcal{Q}_{ij}/\varepsilon_{ij}\le C,
\end{align*}
for some constant depending only on $n,k$ and $(M,g)$. From \eqref{q_{ij}eq1} and the equivalence of metrics, we also obtain for $\delta$ sufficiently small
\begin{align*}
&\mathcal{Q}_{ij}\gtrsim \bigintss_{B(0,1)}\(\frac{1}{1+\mathfrak{c}^{-1}_{n,k}|x|^{2}}\)^{\frac{n+2k}{2}}\(\frac{\mu_{j}}{\mu_{i}}+\frac{\big(d_{g_{\xi_{i}}}\big(\exp_{\xi_{i}}^{g_{\xi_{i}}}(\mu_{i}x),\xi_{j}\big)\big)^{2}}{\mu_{i}\mu_{j}}\)^{\frac{2k-n}{2}}dx\notag\\
&\mathcal{Q}_{ij}\gtrsim\varepsilon_{ij}\bigintss_{B(0,1)}\(\frac{1}{1+\mathfrak{c}^{-1}_{n,k}|x|^{2}}\)^{\frac{n+2k}{2}}dx.
\end{align*}
Hence we always have that 
\begin{align}\label{int.5b}
\mathcal{Q}_{ij}\approx\varepsilon_{ij}
\end{align}
\end{proof}
\medskip


\noindent
The following interaction estimates follow from the proof of Lemma 3.4 in \cite{MayNdi2}.  
\begin{lemma}\label{p,q int lemma}
Let $k\ge1$ be an integer, and $\(M,g\)$ be a smooth, closed Riemannian manifold of dimension $2k+1\le n\le 2k+3$ or $(M,g) $ be locally conformally flat. Assume that the GJMS operator $P_{g}$ satisfies \eqref{positivity}, and let $0<\delta<1$. Let $p,q>1$ with $p+q=2^{*}_{k}$. For all $\xi_{i}, \xi_{j}\in M$, $0<\mu_{i}, \mu_{j}<\delta$ sufficiently small, and such that the error estimate \eqref{error1} holds, we have the following interaction estimates for $i\ne j$.
\begin{align}\label{int.6}
\int_{M}V_{\xi_{i}\mu_{i}}^{\,p}V_{\xi_{j},\mu_{j}}^{\,q}\,dv_{g}\lesssim\left\{\begin{aligned}&\varepsilon_{ij}^{\,\frac{n}{n-2k}}\log\(\frac{1}{\varepsilon_{ij}}\)&&\text{if $p=q$},\\&\varepsilon_{ij}^{\,q}&&\text{if $p>2^{*}_{k}/{2}>q$},\end{aligned}\right.
\end{align}
with the constants in $\lesssim$ depending only on $n,k$, $(M,g)$ and $|p-q|$.
\end{lemma}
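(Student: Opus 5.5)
We reduce to a flat model computation. Each bubble satisfies the global pointwise bound
\[
V_{\xi,\mu}(x)\lesssim \Big(\frac{\mu}{\mu^{2}+d_{g}(x,\xi)^{2}}\Big)^{\frac{n-2k}{2}} ,
\]
valid on all of $M$. Inside $B_{g_{\xi}}(\xi,2\delta)$ this follows from the definition of $\widetilde U_{\xi,\mu}$ and $\Lambda_{\xi}\approx 1$. Outside, it follows from the Green's function bound \eqref{bounds.Green}, since $\mu^{\frac{n-2k}{2}}d^{2k-n}\approx(\mu/(\mu^2+d^2))^{\frac{n-2k}{2}}$ when $d\ge\delta>\mu$. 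We also use the equivalence $d_{g_{\xi}}\approx d_g$, and $\varepsilon_{ij}\approx(\mu_i/\mu_j+\mu_j/\mu_i+d_g(\xi_i,\xi_j)^2/(\mu_i\mu_j))^{\frac{2k-n}{2}}$. It therefore suffices to bound
\[
\int_M \delta_i^{\,p}\delta_j^{\,q}\,dv_g,\qquad \delta_\ell(x):=\Big(\frac{\mu_\ell}{\mu_\ell^2+d_g(x,\xi_\ell)^2}\Big)^{\frac{n-2k}{2}} .
\]
Since $p,q$ enter only through these model functions, the constants will depend on $|p-q|$ only through integrability exponents. This is exactly the situation of Lemma 3.4 of \cite{MayNdi2}, with $n-2$ replaced by $n-2k$. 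We plan to reproduce that argument: split $M$ into regions, and compare with $\R^n$ via normal coordinates, which costs only bounded factors because the integrands are positive.

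The splitting is as follows. Without loss of generality $\mu_i\ge\mu_j$ after relabeling, keeping track of which exponent sits on the more concentrated bubble. Set $R:=\varepsilon_{ij}^{-1/(n-2k)}$, so that $R^{2}\approx\mu_i/\mu_j+d^2/(\mu_i\mu_j)$ with $d=d_g(\xi_i,\xi_j)$. Split $M$ into $A_i=\{d_g(x,\xi_i)\le d_g(x,\xi_j)\}$ and its complement $A_j$. On $A_i$ we have $\delta_j(x)\lesssim$ the value of $\delta_j$ at distance $\gtrsim \max(d,\ d_g(x,\xi_i))/2$, and symmetrically on $A_j$. Rescaling around $\xi_i$ by $\mu_i$ and around $\xi_j$ by $\mu_j$ then reduces each piece to a radial integral of the form
\[
\varepsilon_{ij}^{\,q}\int_{|y|\lesssim R}\frac{dy}{(1+|y|^2)^{\frac{(n-2k)p}{2}}}\;+\;\text{tail},
\]
and the analogous expression with $p$ and $q$ exchanged.

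The key arithmetic is that $(n-2k)p$ is compared with $n$, and $p=q=2^*_k/2$ gives exactly $(n-2k)p=n$. When $p>2^*_k/2$, the $\delta_i^p$-integral converges, giving $\varepsilon_{ij}^{q}$. On the other side, $\delta_j^{q}$ with $(n-2k)q<n$ integrates up to radius $R$ to $R^{\,n-(n-2k)q}$. Multiplied by the prefactor $\varepsilon_{ij}^{p}$, this gives $\varepsilon_{ij}^{\,p-\frac{n-(n-2k)q}{n-2k}}=\varepsilon_{ij}^{\,p+q-\frac{n}{n-2k}}=\varepsilon_{ij}^{\,\frac{n}{n-2k}}\le\varepsilon_{ij}^{q}$, since $q<\frac{n}{n-2k}$. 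When $p=q$, both radial integrals are logarithmic, giving $\varepsilon_{ij}^{\frac{n}{n-2k}}\log R\approx\varepsilon_{ij}^{\frac{n}{n-2k}}\log(1/\varepsilon_{ij})$. The tails, and the region of $M$ at distance $\gtrsim\delta$, are handled by the decay in the pointwise bound, using compactness of $M$.

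The main obstacle is the bookkeeping between the two regimes $d\lesssim\sqrt{\mu_i\mu_j}$ (concentric bubbles of different heights) and $d$ large. In the first regime, the region where $\delta_j\approx\mu_j^{-\frac{n-2k}{2}}$ must be treated separately on the ball $B(\xi_j,\mu_j)$. There we bound $\delta_i^p\lesssim\varepsilon_{ij}^{p}\mu_j^{-\frac{(n-2k)p}{2}}$ (using $\mu_i\ge\mu_j$) and integrate directly. We would check carefully in this case that the exponent of $\mu_j/\mu_i$ produced matches $\varepsilon_{ij}^q$, respectively $\varepsilon_{ij}^{n/(n-2k)}$.
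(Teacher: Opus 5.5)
Your proposal is correct and follows the paper's route. The paper gives no argument of its own beyond deferring to the proof of Lemma 3.4 in \cite{MayNdi2}, and your reduction via the global bound $V_{\xi,\mu}\lesssim(\mu/(\mu^{2}+d_{g}(x,\xi)^{2}))^{\frac{n-2k}{2}}$ to model bubbles, followed by that region splitting, is how that citation is meant to be applied.

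One bookkeeping correction for when you carry it out, taking $\mu_i\ge\mu_j$ as you do. The ``concentric'' regime is really $d_g(\xi_i,\xi_j)\lesssim\mu_i$, where $\varepsilon_{ij}\approx(\mu_j/\mu_i)^{\frac{n-2k}{2}}$, and not just $d_g(\xi_i,\xi_j)\lesssim\sqrt{\mu_i\mu_j}$. In that regime, if the smaller exponent sits on the concentrated bubble, its non-integrable factor must be integrated out to the rescaled radius $\mu_i/\mu_j\approx R^{2}$ rather than $R$. This gives $\varepsilon_{ij}^{\min(p,q)}$ instead of $\varepsilon_{ij}^{\frac{n}{n-2k}}$, and when $p=q$ a factor $\log(\mu_i/\mu_j)\lesssim\log\varepsilon_{ij}^{-1}$. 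Both are still exactly the claimed bound, so the conclusion stands.
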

These estimates are now standard, and we also refer the interested reader to Lemma 3.1 in \cite{MayNdi2} for the interactions between Euclidean bubbles. These results are in the spirit of the interaction estimates of Bahri \cite{Bahri}. 
\medskip

\noindent
Combining \eqref{int.4}, \eqref{int.5} and \eqref{int.5b}, we have thus obtained the following relations between the various interaction quantities defined in \eqref{int.quant}. This is more readily useful in our energy expansions next. But, first we introduce following notations: $\text{b}=\smallo_{\beta}(1)$ denotes $\text{b}=\smallo(1)$ as the parameter $\beta\to0$, $\text{A}=\bigO_{\zeta_{1},\ldots,\zeta_{k}}(\text{B})$ means that $\text{A}=\bigO(\text{B})$ uniformly with respect to the quantities $\zeta_{1},\ldots,\zeta_{k}$. Similarly $\text{b}=\smallo_{\zeta_{1},\ldots,\zeta_{k};\beta}(\text{1})$ means that $\text{b}=\smallo(1)$ as the parameter  $\beta\to0$  uniformly with respect to the quantites $\zeta_{1},\ldots,\zeta_{k}$.
\smallskip

\begin{proposition}\label{prop:interact}
Let $k\ge1$ be an integer, and $\(M,g\)$ be a smooth, closed Riemannian manifold of dimension $2k+1\le n\le 2k+3$ or $(M,g) $ be locally conformally flat. Assume that the GJMS operator $P_{g}$ satisfies \eqref{positivity}, and fix  $0<\delta<1$. Let $d\in \N$, $\Xi_{d}:=\(\xi_{1},\ldots,\xi_{d}\)\in M^{d}$ and let  $\mu_{i}=\mu\in (0,\delta)$ for all $1\leq i\leq d$ be sufficiently small, and such that the error estimate \eqref{error1} holds. Then the following holds. 
\begin{itemize}
\item
There exists $C>0$ such that for all $d,\,\Xi_{d}$ and $\mu$ we have for all $i\neq j$
\begin{align}\label{int.7}
C^{-1}\leq{\mathcal{Q}_{ij}}/{\varepsilon_{ij}}\leq C.
\end{align}
\item As $\mu\to0$ we have for all $1\le i\le d$
\begin{align}\label{int.8}
\int_{M}V_{\xi_{i},\mu}\,P_{g}V_{\xi_{i},\mu}\,dv_{g}=\int_{M}V_{\xi_{i},\mu}^{2^{*}_{k}}\,dv_{g}\(1+\smallo_{\xi_{i},\mu}(1)\).
\end{align}
\item
As $\mu\to0$ we have for $1\leq i\neq j \leq d$ 
\begin{align}\label{int.9}
&\mathfrak{L}_{ij}= \mathcal{Q}_{ij}+\varepsilon_{ij}\bigO_{\Xi_{d}}\(\mu_{i}^{4}+\delta^{n-2k}+\(\frac{\mu_{i}}{\delta}\)^{2k}\),\hbox{ or }\notag\\
&\mathfrak{L}_{ij}= \mathcal{Q}_{ij}\[1+\smallo_{\Xi_{d},\mu}(1)+\bigO_{\Xi_{d}}\(\delta^{n-2k}\)\].
\end{align}
\item
For $1\leq i\neq j \leq d$ we have as $\varepsilon_{ij}\to 0$ 
\begin{align}\label{int.10}
\mathcal{Q}_{ij}=\varepsilon_{ij}\|B_{0}\|_{2^{*}_{k}-1}^{2^{*}_{k}-1}\[1+\smallo_{\Xi_{d};\varepsilon_{ij}}(1)+\bigO_{\Xi_{d}}(\delta)\].
\end{align}
\item
For $1\leq i\neq j \leq d$ we have as $\varepsilon_{ij}\to 0$ 
\begin{align}\label{int.11}
&\mathfrak{L}_{ij}=\varepsilon_{ij}\|B_{0}\|_{2^{*}_{k}-1}^{2^{*}_{k}-1}\[1+\smallo_{\Xi_{d};\varepsilon_{ij}}(1)+\bigO_{\Xi_{d}}(\delta)\].
\end{align}
Here the constants in $\bigO(\cdot)$ depends only on $n,k$ and $(M,g)$ and $B_{0}$ is the canonical bubble defined in \eqref{bubble1}. 
\end{itemize}
The estimates in Proposition \ref{prop:interact} will be instrumental in obtaining the energy expansions next. 
\end{proposition}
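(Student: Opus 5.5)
The proposition collects the preceding lemmas, specialized to equal scales $\mu_i=\mu$, so the plan is to check each bullet against the earlier estimates.

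\emph{First bullet, \eqref{int.7}.} This is exactly \eqref{int.5b}, proved at the end of the proof of \eqref{int.5}. There, the upper bound $\mathcal{Q}_{ij}\lesssim\varepsilon_{ij}$ comes from \eqref{q_{ij}eq6} together with the exterior bound. The lower bound comes from restricting the integral in \eqref{q_{ij}eq1} to $B(0,1)$ and using the bounds \eqref{bounds.Green}. All constants depend only on $n,k,(M,g)$, so $C$ is uniform in $d$, $\Xi_d$ and $\mu$. Since $\mu_i=\mu_j$, the ordering hypothesis $\mu_i\le\mu_j$ is satisfied automatically.

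\emph{Second bullet, \eqref{int.8}.} Write
$$\int V P_gV=\int V^{2^*_k}+\int (P_gV-V^{2^*_k-1})V.$$
By \eqref{int.1}, the error term is $\bigO(\mu^4\delta^{4+2k-n}+\mu^{n-2k}+\mu^n\delta^{-n})$, which tends to $0$ as $\mu\to0$ uniformly in $\xi_i$. By \eqref{int.2},
$$\int V^{2^*_k}=\int B_{\xi,\mu}^{2^*_k}+\bigO((\mu/\delta)^n).$$
The main term $\int B_{\xi,\mu}^{2^*_k}$ is bounded below by a positive constant: rescaling gives $\|B_0\|_{2^*_k}^{2^*_k}(1+\smallo(1))$, after converting between $dv_g$ and $dv_{g_\xi}$ near $\xi$ with $\Lambda_\xi(\xi)=1$. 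Dividing by this main term gives the stated form.

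\emph{Third bullet, \eqref{int.9}.} The first line is \eqref{int.4} with $\mu_j\le\mu_i$, which holds since the scales are equal. For the second line, divide by $\mathcal{Q}_{ij}$ and use \eqref{int.7}, so that $\varepsilon_{ij}/\mathcal{Q}_{ij}\le C$. The terms $\mu^4$ and $(\mu/\delta)^{2k}$ are $\smallo_\mu(1)$ for fixed $\delta$, and the term $\delta^{n-2k}$ remains as $\bigO(\delta^{n-2k})$.

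\emph{Fourth and fifth bullets, \eqref{int.10} and \eqref{int.11}.} For \eqref{int.10}, use \eqref{int.5} with $\mu_i=\mu_j$, more precisely its multiplicative form displayed just before \eqref{int.5b}. The terms $\varepsilon_{ij}^{2/(n-2k)}$, $\varepsilon_{ij}^{2/(n-2k)}\log\varepsilon_{ij}^{-1}$ and $\varepsilon_{ij}^{2k/(n-2k)}$ are $\smallo(1)$ as $\varepsilon_{ij}\to0$.

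The remaining terms $\mu^{2k}/\delta^{2k}$ and $\mu^2/\delta^2$ need a short justification, and this is the main subtlety. With equal scales,
$$\varepsilon_{ij}\approx\big(2+d_g(\xi_i,\xi_j)^2/\mu^2\big)^{(2k-n)/2}\le 2^{(2k-n)/2},$$
so $\varepsilon_{ij}\to0$ forces $d_g(\xi_i,\xi_j)/\mu\to\infty$. Since $M$ has bounded diameter, this in turn forces $\mu\to0$. Hence these terms are also $\smallo_{\varepsilon_{ij}}(1)$ for fixed $\delta$, and only $\bigO(\delta)$ survives.

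One more point must be checked: the leading term in \eqref{q_{ij}eq6} carries the factor $\Lambda_{\xi_j}(\xi_i)$ and uses $G_{g_{\xi_j}}$ rather than $G_g$. Converting through
$$G_{g_{\xi_j}}(\xi_i,\xi_j)=\Lambda_{\xi_j}(\xi_i)^{-1}G_g(\xi_i,\xi_j)$$
and using \eqref{conf.1} produces a factor $\Lambda_{\xi_j}(\xi_i)^{1\pm1}$. If $d_g(\xi_i,\xi_j)\lesssim\delta$, this factor is $1+\bigO(\delta)$ because $\nabla\Lambda_\xi(\xi)=0$. If $d_g(\xi_i,\xi_j)\gtrsim\delta$, then $\varepsilon_{ij}\lesssim(\mu/\delta)^{n-2k}$, so both sides are $\smallo(1)$ relative to the error terms; this absorption I expect to be the main place needing care.

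Finally, \eqref{int.11} follows by inserting \eqref{int.10} into the second form of \eqref{int.9}. The factor $\delta^{n-2k}\le\delta$ is absorbed into $\bigO(\delta)$, and $\smallo_\mu(1)$ becomes $\smallo_{\varepsilon_{ij}}(1)$ because $\varepsilon_{ij}\to0$ forces $\mu\to0$.
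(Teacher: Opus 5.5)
Your plan is the paper's own route. The paper proves the proposition simply by combining \eqref{int.1}, \eqref{int.2}, \eqref{int.4}, the multiplicative form of \eqref{int.5}, and \eqref{int.5b} at equal scales. Your remarks fill in details the paper leaves implicit: equal scales satisfy both ordering hypotheses, and $\varepsilon_{ij}\to0$ forces $\mu\to0$ (bounded diameter), so $\mu^{2}/\delta^{2}$ and $(\mu/\delta)^{2k}$ become $\smallo_{\varepsilon_{ij}}(1)$.

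The one step to fix is your treatment of $\Lambda_{\xi_j}(\xi_i)$. The exponent is exactly $1-1=0$, not ``$1\pm1$'', because $\Lambda_{\xi_j}(\xi_i)\,G_{g_{\xi_j}}(\xi_i,\xi_j)=G_g(\xi_i,\xi_j)$. Concretely,
\begin{equation*}
\Lambda_{\xi_j}(\xi_i)\Big(1+\mathfrak{c}_{n,k}^{-1}\frac{(b_{n,k}^{-1}G_{g_{\xi_j}}(\xi_i,\xi_j))^{\frac{2}{2k-n}}}{\mu^{2}}\Big)^{\frac{2k-n}{2}}=\Big(\Lambda_{\xi_j}(\xi_i)^{\frac{2}{2k-n}}+\mathfrak{c}_{n,k}^{-1}\frac{(b_{n,k}^{-1}G_{g}(\xi_i,\xi_j))^{\frac{2}{2k-n}}}{\mu^{2}}\Big)^{\frac{2k-n}{2}}=\varepsilon_{ij}\big(1+\smallo_{\varepsilon_{ij}}(1)\big).
\end{equation*}
This holds uniformly, since $\Lambda$ is bounded above and below on $M\times M$. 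Structurally, outside $B_{g_{\xi_j}}(\xi_j,2\delta)$ one has exactly $V_{\xi_j,\mu}=\mathfrak{c}_{n,k}^{\frac{n-2k}{2}}b_{n,k}^{-1}\mu^{\frac{n-2k}{2}}G_g(\cdot,\xi_j)$, with no $\Lambda$ at all. That is why $\varepsilon_{ij}$ is defined through $G_g$. So no case split on $d_g(\xi_i,\xi_j)$ is needed.

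This correction matters, because the far-case absorption you sketched would not have worked. \eqref{int.10} is a relative estimate. Knowing that $\mathcal{Q}_{ij}$ and $\varepsilon_{ij}$ are both $\bigO((\mu/\delta)^{n-2k})$ says nothing about their ratio. Had a factor $\Lambda_{\xi_j}(\xi_i)^{2}$ survived, it would be an $\bigO(1)$ relative error for distant points, since $\Lambda_{\xi_j}$ need not be close to $1$ away from $\xi_j$. The multiplicative form is exactly what the energy expansion uses, for instance in the sign of the coefficient $1-\kappa_{2^{*}_{k}}\cdots$ in \eqref{num/dum5}.
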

\medskip

\section{Energy expansion}\label{sec:Energy}
The goal of this section is to estimate the energy of the sum of bubbles using the interactions. Recall the energy
\begin{align*}
\mathcal{J}_{g,k}(u):=\frac{\displaystyle{\int_{M}u\,P_{g}u\,dv_{g}}}{\(\displaystyle{\int_{M}|u|^{\,2^{*}_{k}}\,dv_{g}}\)^{2/2^{*}_{k}}}.
\end{align*}
Straightforward calculations utilizing \eqref{int.1} and \eqref{int.2} gives the following estimate on the energy of a bubble.

\begin{lemma}
Let $k\ge1$ be an integer, and $\(M,g\)$ be a smooth, closed Riemannian manifold of dimension $2k+1\le n\le 2k+3$ or $(M,g) $ be locally conformally flat. Assume that the GJMS operator $P_{g}$ satisfies \eqref{positivity}. Fix $\xi\in M$, $0<\delta<1$ and choose $0<\mu<\delta$ sufficiently small, and such that the error estimate \eqref{error1} holds. We have 
\begin{align}\label{energy0}
\mathcal{J}_{g,k}(V_{\xi,\mu})=Y_{k}(\S^{n})\(1+\bigO\Big(\mu^{n-2k}+\delta^{4+2k-n}\mu^{n-2k}+\Big(\frac{\mu}{\delta}\Big)^{n}\Big)\).
\end{align}
\end{lemma}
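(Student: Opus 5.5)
The plan is to write both the numerator and the denominator of $\mathcal{J}_{g,k}(V_{\xi,\mu})$ as the common quantity $\int_{M}B_{\xi,\mu}^{2^{*}_{k}}\,dv_{g}$ plus controlled errors, and then expand the quotient.

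\emph{Numerator.} We write
\begin{align*}
\int_{M}V_{\xi,\mu}P_{g}V_{\xi,\mu}\,dv_{g}=\int_{M}V_{\xi,\mu}^{2^{*}_{k}}\,dv_{g}+\int_{M}\big(P_{g}V_{\xi,\mu}-V_{\xi,\mu}^{2^{*}_{k}-1}\big)V_{\xi,\mu}\,dv_{g}.
\end{align*}
The second integral is bounded by \eqref{int.1}, giving an error $\bigO(\mu^{4}\delta^{4+2k-n}+\mu^{n-2k}+\mu^{n}\delta^{-n})$. The first integral is replaced by $\int_{M}B_{\xi,\mu}^{2^{*}_{k}}dv_{g}$ at cost $\bigO((\mu/\delta)^{n})$, by \eqref{int.2}.

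\emph{Common quantity.} Next we compute $\int_{M}B_{\xi,\mu}^{2^{*}_{k}}dv_{g}$. Strictly, it is the $\Lambda_{\xi}$-weighted bubble that enters, and $\Lambda_{\xi}^{2^{*}_{k}}dv_{g}=dv_{g_{\xi}}$, so we work in $g_{\xi}$-normal coordinates. There \eqref{conf.2} gives $dv_{g_{\xi}}=(1+\bigO(|x|^{N}))dx$. Rescaling $x=\mu y$ on $B(0,\delta)$ yields
\begin{align*}
\int_{\R^{n}}B_{0}^{2^{*}_{k}}dy+\bigO\big((\mu/\delta)^{n}\big),
\end{align*}
with the tail outside the ball of order $(\mu/\delta)^{n}$. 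Outside the ball, the integrand is $\lesssim\mu^{n}d^{-2n}$, which is also $\bigO((\mu/\delta)^{n})$.

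\emph{Quotient.} Set $S:=\int_{\R^{n}}B_{0}^{2^{*}_{k}}dy$, so that $S^{2k/n}=Y_{k}(\S^{n})$. Then
\begin{align*}
\mathcal{J}_{g,k}(V_{\xi,\mu})=\frac{S(1+\eta_{1})}{\big(S(1+\eta_{2})\big)^{(n-2k)/n}}=S^{2k/n}(1+\bigO(\eta_{1}+\eta_{2})),
\end{align*}
where $\eta_{1},\eta_{2}$ collect the errors above. Reorganizing the errors gives the stated bound: the term $\mu^{4}\delta^{4+2k-n}$ in \eqref{int.1} is dominated by $\delta^{4+2k-n}\mu^{n-2k}$ when $n-2k\le 3<4$. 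In the locally conformally flat case this term is absent. Since $\mu<\delta<1$ and the coefficient $\delta^{4+2k-n}\ge$ const when $n\le 2k+4$, the form written in \eqref{energy0} absorbs the $\mu^{4}$ contribution, up to bookkeeping.

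\emph{Main obstacle.} The only delicate point is this bookkeeping of exponents. We must check that the $\mu^{4}\delta^{4+2k-n}$ error from the core region is indeed bounded by $\mu^{n-2k}+\delta^{4+2k-n}\mu^{n-2k}$ in the range $2k+1\le n\le 2k+3$. We must also confirm that the Lee–Parker volume expansion contributes nothing worse than $(\mu/\delta)^{n}$. Everything else is a direct invocation of \eqref{int.1} and \eqref{int.2}.
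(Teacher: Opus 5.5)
Your proposal is correct and follows the route the paper indicates (the paper omits the proof, saying it follows by straightforward calculation from \eqref{int.1} and \eqref{int.2}): split the numerator with \eqref{int.1}, pass from $\int_{M}V_{\xi,\mu}^{2^{*}_{k}}\,dv_{g}$ to the bubble integral with \eqref{int.2} read in $dv_{g_{\xi}}$ (your remark about the $\Lambda_{\xi}$-weighting is the right reading), rescale to $\int_{\R^{n}}B_{0}^{2^{*}_{k}}\,dx$, and expand the quotient. One harmless slip: for $n\le 2k+3$ one has $\delta^{4+2k-n}\le 1$, not $\delta^{4+2k-n}\ge$ const; the absorption only needs $\mu^{4}\le\mu^{n-2k}$, which you had already observed.
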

We omit the proof of the above lemma and refer the interested reader to Proposition 3.1 in \cite{MazVetois} for an expression of the energy of the bubble with the mass.
\begin{remark}\label{rem:expn1}
Expansion \eqref{energy0} is a rough estimate and the constant in front of $\bigO(\mu^{n-2k})$ term in \eqref{energy0} is not explicit; it could be given in terms of the mass of $P_{g}$ as in Proposition B.3. of \cite{Bre1} for the $k=1$ case. Since we do not assume any information on the sign of the mass of $P_{g}$ and to keep our calculations succinct, we simply write the constant in front of $\bigO(\mu^{n-2k})$ term as a real number.
\end{remark}
\medskip

One of the key results in this paper is the following expansion of the energy of the sum of $d$-many bubbles all with the same heights $1/\mu\to+\infty$. 
\begin{proposition}
Let $k\ge1$ be an integer, and $\(M,g\)$ be a smooth, closed Riemannian manifold of dimension $2k+1\le n\le 2k+3$ or $(M,g) $ be locally conformally flat. Assume that the GJMS operator $P_{g}$ satisfies \eqref{positivity}. Let $d\in \N$, and $\xi_{i}\in M$ for  $1\leq i\leq d$, $\mu\in (0,\delta)$ sufficiently small, and  such that the error estimate \eqref{error1} holds. There exists $\varepsilon_{0}>0$, $\mu_{0}>0$ and $\delta>0$ small such that for every positive integer $d\geq2$, if $\sum \limits_{i\neq j}^{d}\varepsilon_{ij}<\varepsilon_{0}$ and $0<\mu<\mu_{0}$,  we have the following energy expansion.
\begin{align}\label{energy_sum0}
&\mathcal{J}_{g,k}\Big(\sum \limits_{i=1}^{d}V_{\xi_{i},\mu}\Big)\leq d^{2k/n}Y_{k}(\S^{n})\Bigg(1-\Big(\mathfrak{m}_{g}+(d-1)\,\mathfrak{b}_{n,k}\,\min\limits_{x\neq y}G_{g}(x,y)\Big)\mu^{n-2k}\Bigg),
\end{align}
for some  constant $\mathfrak{m}_{g}\in\R$, $\mathfrak{b}_{n,k}=\dfrac{\kappa_{2^{*}_{k}}-1}{4d}\dfrac{\|B_{0}\|_{2^{*}_{k}-1}^{2^{*}_{k}-1}}{\|B_{0}\|_{2^{*}_{k}}^{2^{*}_{k}}}\mathfrak{c}_{n,k}^{\frac{n-2k}{2}}\,b_{n,k}^{-1}>0$ for some constant $\kappa_{2^{*}_{k}}>1$,  and $B_{0}$ is the canonical bubble defined in \eqref{bubble1}. 
\end{proposition}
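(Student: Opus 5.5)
Write $u:=\sum_{i=1}^d V_i$ with $V_i:=V_{\xi_i,\mu}$, and expand numerator and denominator of $\mathcal{J}_{g,k}(u)$ separately.

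\emph{Numerator.} By self-adjointness of $P_g$,
\[
\int_M uP_gu\,dv_g=\sum_i\int_M V_iP_gV_i\,dv_g+\sum_{i\ne j}\mathfrak{L}_{ij}.
\]
For the diagonal terms I would use \eqref{int.1} and \eqref{int.2}. Each becomes $\|B_0\|_{2^*_k}^{2^*_k}\big(1+\bigO(\mu^{n-2k})+\bigO(\delta^{4+2k-n}\mu^{n-2k})+\bigO((\mu/\delta)^n)\big)$. Fixing $\delta$ first, every error is of size $\mu^{n-2k}$, with a real constant. In the conformally flat case, or when $n\le 2k+3$ so that $4>n-2k$, this can be absorbed into $-\mathfrak{m}_g\mu^{n-2k}$. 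For the cross terms I would use \eqref{int.9}: $\mathfrak{L}_{ij}=\mathcal{Q}_{ij}(1+\smallo(1)+\bigO(\delta^{n-2k}))$.

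\emph{Denominator.} I would expand $\int_M u^{2^*_k}$ with the elementary inequality
\[
\Big(\sum a_i\Big)^{p}\ge\sum a_i^p+\kappa_p\sum_{i\ne j}a_i^{p-1}a_j-C\sum_{i\ne j}(a_ia_j)^{p/2},\qquad p=2^*_k,\ \kappa_p>1,
\]
which holds for positive reals since $p>2$. Integrating gives
\[
\int_M u^{2^*_k}\ge\sum_i\int_M V_i^{2^*_k}+\kappa\sum_{i\ne j}\mathcal{Q}_{ij}-C\sum_{i\ne j}\varepsilon_{ij}^{\frac{n}{n-2k}}\log\varepsilon_{ij}^{-1}
\]
by \eqref{int.6}. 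Since $\varepsilon_{ij}<\varepsilon_0$ is small, the last term is $\smallo(\varepsilon_{ij})$, and by \eqref{int.7} it is $\smallo(\mathcal{Q}_{ij})$.

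\emph{Combining.} Set $S:=d\|B_0\|_{2^*_k}^{2^*_k}$ and $\Sigma:=\sum_{i\ne j}\mathcal{Q}_{ij}$, and note $\Sigma\lesssim d^2\varepsilon_0$ is small relative to $S$ only per pair. Then
\[
\mathcal{J}_{g,k}(u)\le\frac{S\big(1+\bigO(\mu^{n-2k})\big)+\Sigma(1+\smallo(1)+\bigO(\delta^{n-2k}))}{\big(S+\kappa\Sigma-\smallo(\Sigma)\big)^{2/2^*_k}}.
\]
Expanding to first order, the quantity in parentheses equals $S^{2k/n}\big(1+\bigO(\mu^{n-2k})\big)$ times
\[
1+\frac{\Sigma}{S}\Big(1-\tfrac{2}{2^*_k}\kappa\Big)+\text{lower order}.
\]
It follows that $\mathcal{J}_{g,k}(u)\le d^{2k/n}Y_k(\S^n)\big[1-\frac{c(\kappa-1)}{d}\frac{\Sigma}{\|B_0\|_{2^*_k}^{2^*_k}}\big]$ up to the $\mu^{n-2k}$ term. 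Here I would choose $\kappa$ with $\frac{2}{2^*_k}\kappa-1\ge\frac{\kappa-1}{2}$; this is where the $\frac14$ in $\mathfrak{b}_{n,k}$ comes from, after $\delta$ and $\varepsilon_0$ are chosen small so the $\bigO(\delta^{n-2k})$ and $\smallo(1)$ errors eat at most half of the gain.

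\emph{Lower bound on $\Sigma$.} Finally I would bound $\Sigma$ from below in terms of $\mu^{n-2k}$ and $G_g$. With equal scales,
\[
\varepsilon_{ij}=\Big(2+\mathfrak{c}_{n,k}^{-1}\mu^{-2}\big(b_{n,k}^{-1}G_g(\xi_i,\xi_j)\big)^{\frac{2}{2k-n}}\Big)^{\frac{2k-n}{2}}.
\]
When $\varepsilon_{ij}$ is small this equals $\mathfrak{c}_{n,k}^{\frac{n-2k}{2}}b_{n,k}^{-1}\mu^{n-2k}G_g(\xi_i,\xi_j)(1+\smallo(1))$. By \eqref{int.10}, $\mathcal{Q}_{ij}\ge\frac12\varepsilon_{ij}\|B_0\|^{2^*_k-1}_{2^*_k-1}$, so
\[
\Sigma\ge\tfrac12\,d(d-1)\,\|B_0\|_{2^*_k-1}^{2^*_k-1}\,\mathfrak{c}_{n,k}^{\frac{n-2k}{2}}b_{n,k}^{-1}\,\mu^{n-2k}\min_{x\ne y} G_g.
\]
Dividing by $d$ yields the $(d-1)\mathfrak{b}_{n,k}\min G_g$ term.

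\emph{Main obstacle.} The delicate step is bookkeeping uniformity in $d$. The inequality constant $C$ and the $\smallo(1)$'s depend on $d$, while $\varepsilon_0$, $\mu_0$ and $\delta$ must be fixed independently of $d$. I would handle this by keeping every error relative to $\mathcal{Q}_{ij}$ pairwise, which is possible via \eqref{int.7}, and by using that $\sum_{i\ne j}\varepsilon_{ij}<\varepsilon_0$ makes the quadratic remainder of the Taylor expansion, of order $(\Sigma/S)^2$, bounded by $\varepsilon_0\Sigma/S$.
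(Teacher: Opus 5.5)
Your skeleton is the paper's (the statement is the case $a_i\equiv 1$ of \eqref{energy_sum3_main}). The numerator is split into single-bubble energies plus $\sum_{i\ne j}\mathfrak{L}_{ij}$ with $\mathfrak{L}_{ij}\approx\mathcal{Q}_{ij}$. The denominator $\int_M u^{2^*_k}$ gets an elementary lower bound, everything is expanded to first order, and $\sum_{i\ne j}\varepsilon_{ij}\gtrsim d(d-1)\mu^{n-2k}\min G_g$ closes the argument. However, the step that produces the gain fails as written.

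After expanding, the interactions enter with the factor $1-\frac{2}{2^*_k}\kappa$, so you need $\kappa>2^*_k/2$. Your inequality only asserts $\kappa_p>1$, which gives no gain at all if $\kappa_p\le 2^*_k/2$. The paper's (Bahri--Brezis) inequality carries the coefficient $\frac{2^*_k}{2}\kappa_{2^*_k}$; that is why $\kappa_{2^*_k}>1$ suffices there and the gain is $\kappa_{2^*_k}-1$. You dropped the factor $\frac{2^*_k}{2}$ and then identified your $\kappa$ with the statement's $\kappa_{2^*_k}$.

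Your concrete choice $\frac{2}{2^*_k}\kappa-1\ge\frac{\kappa-1}{2}$ is impossible whenever $2^*_k>3$. Taking $d=2$, $a_1=1$, $a_2=t\to0$ in your inequality forces $\kappa\le 2^*_k$, because the remainder $t^{2^*_k/2}$ is $\smallo(t)$. The condition, on the other hand, forces $\kappa\ge 2^*_k/(4-2^*_k)>2^*_k$ when $3<2^*_k<4$, and it has no positive solution when $2^*_k\ge 4$. Since $2^*_k\ge\frac{2(2k+3)}{3}>3$ throughout $2k+1\le n\le 2k+3$, the argument breaks precisely in the main case.

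The repair is to take $\kappa=2^*_k$.
\begin{itemize}
\item For $p=2^*_k\ge 3$ (i.e. $n\le 6k$) one has $(A+b)^p\ge A^p+b^p+p(A^{p-1}b+Ab^{p-1})$ for $A,b\ge0$. To see this, apply to $t=b/A$ the function $g(t)=(1+t)^p-1-t^p-pt-pt^{p-1}$, which satisfies $g(0)=g'(0)=0$ and $g''\ge0$ by the mean value theorem since $p\ge3$.
\item Induction on $d$, together with $(\sum_{i<d}a_i)^{p-1}\ge\sum_{i<d}a_i^{p-1}$, then gives $(\sum_i a_i)^p\ge\sum_i a_i^p+p\sum_{i\ne j}a_i^{p-1}a_j$ with $C=0$ for every $d$.
\item The interaction coefficient becomes $1-2=-1$, a stronger gain than the paper's $\kappa_{2^*_k}-1$.
\item Since $\kappa_{2^*_k}>1$ in the statement is only \emph{some} constant, the claimed bound follows after choosing it close enough to $1$. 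There is no need to reverse-engineer the $\frac14$.
\end{itemize}

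This also settles your ``main obstacle'', where your proposed remedy does not work. If $C=C_d$, then \eqref{int.6} and \eqref{int.7} bound the remainder by $C_d\,\varepsilon_0^{\frac{2k}{n-2k}}\log(1/\varepsilon_0)\,\Sigma$. Keeping errors pairwise therefore still forces $\varepsilon_0$ to shrink with $d$.

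For $2<2^*_k<3$ (locally conformally flat with $n>6k$), you would need a $d$-independent remainder constant, which you have not justified. Alternatively, use the remainder-free inequality with coefficient $\frac{2^*_k}{2}\kappa_{2^*_k}$, as the paper does.

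Finally, $\Sigma\lesssim\sum_{i\ne j}\varepsilon_{ij}<\varepsilon_0$, not $d^2\varepsilon_0$. This is exactly what makes the Taylor remainder $d$-uniform, as you later use.
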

The proof of the above energy expansion is contained in expression \eqref{energy_sum3_main} in Proposition \ref{main:energy esti}.
\medskip

\noindent
Let $(a_{1},\ldots,a_{d})\in (0,+\infty)^{d}$ and consider the sum of $d\in\N$ many bubbles with weights $a_{i}$ given by $=\sum \limits_{i=1}^{d}a_{i}V_{\xi_{i}\mu}$, where $\xi_{i}\in M$, $1\le i \le d$. We now estimate the energy of the sum of bubbles with weights. This is one of the main results of this paper and makes the topological arguments in the next section work. 
\noindent

\begin{proposition}\label{main:energy esti}
Let $k\ge1$ be an integer, and $\(M,g\)$ be a smooth, closed Riemannian manifold of dimension $2k+1\le n\le 2k+3$ or $(M,g) $ be locally conformally flat. Assume that the GJMS operator $P_{g}$ satisfies \eqref{positivity}, and let $d\in\N$. Let $\xi_{1},\cdots,\xi_{d}\in M$ and $\mu\in (0,\delta)$ sufficiently small, and such that the error estimate \eqref{error1} holds. We have the following estimates on the energy of the sum of  $d\in\N$ bubbles $\sum \limits_{i=1}^{d}a_{i}V_{\xi_{i}\mu}$, with weights $a_{1},\ldots,a_{d}\in (0,+\infty)$.
\begin{enumerate}
\item
There exists $\mu_{*},\delta_{*}>0$ such that for all $0<\mu\le\mu_{*}$, $0<\delta\le\delta_{*}$ the following bound on the energy hold for all $d\ge2$. 
\begin{align}\label{energy_sum0}
\mathcal{J}_{g,k}\Big(\sum \limits_{i=1}^{d}a_{i}V_{\xi_{i},\mu_{i}}\Big)\le~\(d+1/2\)^{\,2k/n}Y_{k}(\S^{n}).
\end{align}

\item
For every $\varepsilon>0$ small and $d\ge2$, there exists  $\mu_{*}(\varepsilon,d),\delta_{*}(\varepsilon,d)>0$ such that for all $0<\mu\le\mu_{*}(\varepsilon,d)$, $0<\delta\le\delta_{*}(\varepsilon,d)$,  if $\displaystyle \sum\limits_{1\le i\neq j \le d}\varepsilon_{ij}>\varepsilon$, we have:
\begin{align}\label{energy_sum1}
\mathcal{J}_{g,k}\Big(\sum \limits_{i=1}^{d}a_{i}V_{\xi_{i}\mu}\Big)< d^{\,2k/n}Y_{k}(\S^{n}).
\end{align}
\item
For every $\tau>1, \varepsilon>0$ small and $d\ge2$, there exists  $\mu=\mu_{*}(\tau,\varepsilon,d),\delta_{*}(\tau,\varepsilon,d)>0$  such that for all $0<\mu\le\mu_{*}(\varepsilon,d,\tau)$, $0<\delta\le\delta_{*}(\varepsilon,d,\tau)$,  if $\dfrac{a_{i_{0}}}{a_{j_{0}}}>\tau$ for some $i_{0}\neq j_{0}$ and $\displaystyle\sum\limits_{1\le i\neq j\le d}\varepsilon_{ij}\le\varepsilon$,  we have:
\begin{align}\label{energy_sum2}
\mathcal{J}_{g,k}\Big(\sum \limits_{i=1}^{d}a_{i}V_{\xi_{i}\mu_{i}}\Big)< d^{\,2k/n}Y_{k}(\S^{n}).
\end{align}
\item
There exists $\tau_{*}>1$ close to $1$, $\varepsilon_{*}>0$, $\mu_{*}>0$ and $\delta_{*}>0$ small such that for all $d\ge2$, if $\dfrac{a_{i}}{a_{j}}\le\tau_{*}$ for all $i\neq j$ and  $\sum \limits_{1\le i\neq j\le d}\varepsilon_{ij}<\varepsilon_{*}$, then for all $0<\mu\le\mu_{*}$, $0<\delta\le \delta_{*}$ we have the following energy expansion.
\begin{align}\label{energy_sum3_main}
&\mathcal{J}_{g,k}\Big(\sum \limits_{i=1}^{d}a_{i}V_{\xi_{i},\mu}\Big)\leq d^{\,2k/n}\,Y_{k}(\S^{n})\[1-\(\mathfrak{m}_{g}+(d-1)\,\mathfrak{b}_{n,k}\,\min\limits_{x\neq y}G_{g}(x,y)\)\mu^{n-2k}\],
\end{align}
for some  constant $\mathfrak{m}_{g}\in\R$ and $\mathfrak{b}_{n,k}:=\dfrac{\kappa_{\#}}{4}\dfrac{\|B_{0}\|_{2^{*}_{k}-1}^{2^{*}_{k}-1}}{\|B_{0}\|_{2^{*}_{k}}^{2^{*}_{k}}}\mathfrak{c}_{n,k}^{\frac{n-2k}{2}}\,b_{n,k}^{-1}>0$ for some constant $\kappa_{\#}>0$.
\end{enumerate}
\end{proposition}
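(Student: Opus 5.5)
We expand numerator and denominator of $\mathcal{J}_{g,k}(u)$ for $u=\sum_i a_iV_{\xi_i,\mu}$. Write $u=\sum_i a_iV_i$ with $V_i:=V_{\xi_i,\mu}$. The numerator is
\[
\int_M uP_gu\,dv_g=\sum_i a_i^2\int_M V_iP_gV_i\,dv_g+\sum_{i\ne j}a_ia_j\mathfrak{L}_{ij}.
\]
Each diagonal term is $\|B_0\|_{2^*_k}^{2^*_k}(1+O(\mu^{n-2k}+\delta^{4+2k-n}\mu^{n-2k}+(\mu/\delta)^n))$ by \eqref{int.1}, \eqref{int.2} and \eqref{energy0}. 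The cross terms are controlled by \eqref{int.9}, so that $\mathfrak{L}_{ij}=\mathcal{Q}_{ij}(1+o(1)+O(\delta^{n-2k}))$.

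For the denominator we use the standard Bahri-type inequality
\[
\Big(\sum a_iV_i\Big)^{2^*_k}\ge\sum a_i^{2^*_k}V_i^{2^*_k}+\kappa\sum_{i\ne j}a_i^{2^*_k-1}a_jV_i^{2^*_k-1}V_j-C\sum_{i\ne j}(a_iV_i)^{2^*_k/2}(a_jV_j)^{2^*_k/2},
\]
with the last term present only where bubbles overlap comparably. More precisely, we take $\kappa=\kappa_{2^*_k}>1$ on the region where one bubble dominates, using $(x+y)^p\ge x^p+px^{p-1}y$. The error term is $O(\varepsilon_{ij}^{n/(n-2k)}\log\varepsilon_{ij}^{-1})=o(\varepsilon_{ij})$ by Lemma \ref{p,q int lemma}. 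This is the mechanism that produces a cross gain of factor $2^*_k>2$ in the denominator against the factor $2$ in the numerator, which is exactly where $\kappa-1>0$ in $\mathfrak{b}_{n,k}$ comes from.

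The four items then follow by case analysis. Item (1) is a crude bound: by \eqref{int.7} and H\"older, $\mathcal{J}\le d^{2k/n}Y_k(\S^n)(1+C\sum\varepsilon_{ij}+o(1))$ in the near-diagonal regime. When some $\varepsilon_{ij}$ is large, two bubbles essentially merge, and the energy is bounded by that of $d-1$ bubbles plus a controlled excess. We expect to argue by grouping clusters and using subadditivity of $t\mapsto t^{2k/n}$, landing below $(d+1/2)^{2k/n}$.

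Item (2): if $\sum\varepsilon_{ij}>\varepsilon$, then with $\mu,\delta$ small the interaction gain $-c\,\varepsilon$ (where $c>0$ comes from $\kappa-1$ and \eqref{int.7}) dominates the $o_\mu(1)$ self-errors, giving strict inequality.

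Item (3): with the $\varepsilon_{ij}$ small, the leading part reduces to $\sum a_i^2/(\sum a_i^{2^*_k})^{2/2^*_k}\cdot Y_k(\S^n)$. By strict concavity, i.e. H\"older with equality only when all $a_i$ are equal, this ratio is at most $d^{2k/n}(1-\eta(\tau))$ whenever some $a_{i_0}/a_{j_0}>\tau$. Choosing $\varepsilon$, $\mu$, $\delta$ small relative to $\eta(\tau)$ closes the argument.

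Item (4) is the precise expansion, and it is the main obstacle. Normalizing, for instance by taking $a_i$ close to $1$, we Taylor expand the quotient. The self terms contribute $-\mathfrak{m}_g\mu^{n-2k}$, where $\mathfrak{m}_g$ absorbs the non-explicit mass-type constant of Remark \ref{rem:expn1}. The cross terms contribute
\[
-\frac{\kappa-1}{d}\,\frac{\|B_0\|_{2^*_k-1}^{2^*_k-1}}{\|B_0\|_{2^*_k}^{2^*_k}}\sum_{i\ne j}\varepsilon_{ij}(1+o(1)+O(\delta)),
\]
by \eqref{int.10} and \eqref{int.11}. With equal scales we have $\varepsilon_{ij}\ge(\mu^{-2}\mathfrak{c}_{n,k}^{-1}(b_{n,k}^{-1}G_g)^{2/(2k-n)}+2)^{(2k-n)/2}$. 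This is asymptotically $\mathfrak{c}_{n,k}^{(n-2k)/2}b_{n,k}^{-1}G_g(\xi_i,\xi_j)\mu^{n-2k}$, which is at least $\mathfrak{c}_{n,k}^{(n-2k)/2}b_{n,k}^{-1}\min G_g\,\mu^{n-2k}$.

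The delicate points here are twofold. First, the $\varepsilon_{ij}\to0$ asymptotics must be uniform in $d$ and $\Xi_d$; we handle this by requiring $\sum\varepsilon_{ij}<\varepsilon_*$ and absorbing the error into a factor $1/2$, which yields $\kappa_\#/4$. Second, the deviation of the $a_i$ from equality must be controlled quadratically so that it only helps, which holds by the concavity argument of item (3) in quantitative form when $\tau_*$ is close to $1$. Summing the $d(d-1)$ pairs against the factor $1/d$ gives the $(d-1)\mathfrak{b}_{n,k}\min G_g\,\mu^{n-2k}$ term.
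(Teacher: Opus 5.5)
Your items (3) and (4) follow the paper's route: the numerator is expanded via \eqref{int.9}--\eqref{int.11}, the denominator is bounded below by the Bahri--Brezis inequality $(\sum X_i)^p\ge\sum X_i^p+\frac p2\kappa_p\sum_{i\ne j}X_i^{p-1}X_j$ with $\kappa_p>1$, the H\"older deficit handles unequal weights, and the interaction coefficient stays negative for $\tau_*$ close to $1$. Items (1) and (2), however, are not proved.

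The $(\kappa-1)$-gain you invoke for (2) comes from a first-order expansion of the quotient in the $\varepsilon_{ij}$, and it breaks down in that regime for three reasons. First, your error $\bigO(\varepsilon_{ij}^{n/(n-2k)}\log\varepsilon_{ij}^{-1})$ is $\smallo(\varepsilon_{ij})$ only as $\varepsilon_{ij}\to0$. Second, expanding the power $-2/2^*_k$ of the denominator requires the interaction part to be small. Third, the net coefficient $1-\kappa_{2^*_k}a_i^{2^*_k-2}\sum_l a_l^2/\sum_l a_l^{2^*_k}$ multiplying $a_ia_j\varepsilon_{ij}$ is negative only for comparable weights; it is close to $1$ when $a_i$ is small. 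Item (2) is exactly the other regime: $\sum_{i\ne j}\varepsilon_{ij}>\varepsilon$ with $\varepsilon$ fixed, arbitrary weights, and individual $\varepsilon_{ij}$ possibly of order one. With equal scales, $\varepsilon_{ij}$ reaches $2^{(2k-n)/2}$ when $\xi_i=\xi_j$ and is of order one whenever $d_g(\xi_i,\xi_j)\lesssim\mu$. Your item (3) does not cover this case, since it also assumes the $\varepsilon_{ij}$ small.

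Likewise, in (1) your bound $d^{2k/n}Y_k(\S^n)(1+C\sum\varepsilon_{ij}+\smallo(1))$ carries the interactions with a plus sign. It reaches $(d+1/2)^{2k/n}$ only when $\sum\varepsilon_{ij}\lesssim1/d$. The clustering/subadditivity argument for the remaining configurations is only announced, and two partially overlapping bubbles are not close to a single bubble, so there is nothing to merge.

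What is missing is an inequality valid for every configuration and every choice of weights, with no expansion in $\varepsilon_{ij}$. All cross terms are nonnegative, and by \eqref{int.4} and \eqref{int.7} one has $\mathfrak{L}_{ij}=\mathcal{Q}_{ij}\big(1+\smallo(1)+\bigO(\delta^{n-2k})\big)$ regardless of the size of $\varepsilon_{ij}$. Hence, with $u=\sum_ia_iV_i$, the numerator is at most $\big(1+\smallo(1)+\bigO(\delta^{n-2k})\big)\int_M u\sum_j a_jV_j^{2^*_k-1}\,dv_g$.

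Write the integrand as $u^2F$ with $F=\sum_jw_jV_j^{2^*_k-2}$ and $w_j=a_jV_j/u$. H\"older with exponents $2^*_k/2$ and $n/(2k)$ gives $\mathcal{J}_{g,k}(u)\le\big(1+\smallo(1)+\bigO(\delta^{n-2k})\big)\|F\|_{L^{n/2k}}$. Since $\sum_jw_j=1$ and $(2^*_k-2)\frac{n}{2k}=2^*_k$, Jensen's inequality for $t\mapsto t^{n/2k}$ yields $\int_MF^{n/2k}\,dv_g\le\sum_j\int_MV_j^{2^*_k}\,dv_g-\sum_j\int_M(1-w_j)V_j^{2^*_k}\,dv_g$.

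Discarding the last sum gives (1) once $\mu,\delta$ are small. For (2), bound the last sum below using a single pair, labelled so that $a_j\le a_i$:
\begin{itemize}
\item use $1-w_j\ge a_iV_i/(a_iV_i+a_jV_j)$;
\item restrict to the set where $a_iV_j\ge\theta(a_iV_i+a_jV_j)$;
\item control the complement by $\theta^{2^*_k-2}(\mathcal{Q}_{ij}+\mathcal{Q}_{ji})$.
\end{itemize}
With $\theta$ small, \eqref{int.7} then gives a gain $\theta_\sharp\varepsilon_{ij}$ that is uniform in the weights and in the size of $\varepsilon_{ij}$. Combined with your (3) and (4), this completes the proposition as in the paper.
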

\begin{proof}
We follow the strategy in Bahri-Brezis \cite{BahriBrezis} and Mayer-Ndiaye \cite{MayNdi1}. We write
\begin{align*}
\mathcal{J}_{g,k}\Big(\sum \limits_{i=1}^{d}a_{i}V_{\xi_{i},\mu}\Big)=&\,\frac{\displaystyle{\int_{M}\Big(\sum \limits_{i=1}^{d}a_{i}V_{\xi_{i},\mu}\Big)\,P_{g}\Big(\sum \limits_{j=1}^{d}a_{j}V_{\xi_{j},\mu}\Big)\,dv_{g}}}{\(\displaystyle{\int_{M}\Big(\sum \limits_{i=1}^{d}a_{i}V_{\xi_{i},\mu}\Big)^{\,2^{*}_{k}}\,dv_{g}}\)^{2/2^{*}_{k}}}:=\frac{\mathcal{N}\Big(\sum \limits_{i=1}^{d}a_{i}V_{\xi_{i},\mu}\Big)}{\mathcal{D}\Big(\sum \limits_{i=1}^{d}a_{i}V_{\xi_{i},\mu}\Big)}.
\end{align*}

\noindent
{\bf{Proof of \eqref{energy_sum1}}:} We first control the numerator using the estimates obtained in Proposition \ref{prop:interact} as follows.
\begin{align}\label{num1}
&\mathcal{N}\Big(\sum \limits_{i=1}^{d}a_{i}V_{\xi_{i},\mu}\Big)=\int_{M}\Big(\sum \limits_{i=1}^{d}a_{i}V_{\xi_{i},\mu}\Big)\,P_{g}\Big(\sum \limits_{j=1}^{d}a_{j}V_{\xi_{j},\mu}\Big)\,dv_{g}\notag\\
&=\int_{M}\Big(\sum \limits_{i=1}^{d}a_{i}V_{\xi_{i},\mu}\Big)\,\Big(\sum \limits_{j=1}^{d}a_{j}V_{\xi_{j},\mu}^{\,2^{*}_{k}-1}\Big)\,dv_{g}+\sum \limits_{i,j=1}^{d}a_{i}a_{j}\int_{M}V_{\xi_{i},\mu}\,\Big(P_{g}V_{\xi_{j},\mu}-V_{\xi_{j},\mu}^{\,2^{*}_{k}-1}\Big)\,dv_{g}\notag\\
&=\(1+\smallo_{\Xi_{d},\mu}(1)+\bigO_{\Xi_{d}}\(\delta^{n-2k}\)\)\int_{M}\Big(\sum \limits_{i=1}^{d}a_{i}V_{\xi_{i},\mu}\Big)\,\Big(\sum \limits_{j=1}^{d}a_{j}V_{\xi_{j},\mu}^{\,2^{*}_{k}-1}\Big)\,dv_{g}\notag\\
&=\(1+\smallo_{\Xi_{d},\mu}(1)+\bigO_{\Xi_{d}}\(\delta^{n-2k}\)\)\int_{M}\Big(\sum \limits_{i=1}^{d}a_{i}V_{\xi_{i},\mu}\Big)^{2}\,\Big[{\Big(\sum \limits_{j=1}^{d}a_{j}V_{\xi_{j},\mu}^{\,2^{*}_{k}-1}\Big)}\Big(\sum \limits_{i=1}^{d}a_{i}V_{\xi_{i},\mu}\Big)^{-1}\,\Big]\,dv_{g}\notag\\
&\le\(1+\smallo_{\Xi_{d},\mu}(1)+\bigO_{\Xi_{d}}\(\delta^{n-2k}\)\)\mathcal{D}\Big(\sum \limits_{i=1}^{d}a_{i}V_{\xi_{i},\mu}\Big)\[\bigints_{M}\(\frac{\sum \limits_{j=1}^{d}a_{j}V_{\xi_{j},\mu}^{\,2^{*}_{k}-1}}{\sum \limits_{i=1}^{d}a_{i}V_{\xi_{i},\mu}}\)^{n/2k}dv_{g}\]^{2k/n}.
\end{align}
Thus,
\begin{align}\label{num/dum1}
\mathcal{J}_{g,k}\Big(\sum \limits_{i=1}^{d}a_{i}V_{\xi_{i},\mu}\Big)\le\(1+\smallo_{\Xi_{d},\mu}(1)+\bigO_{\Xi_{d}}\(\delta^{n-2k}\)\)\[\bigints_{M}\(\frac{\sum \limits_{j=1}^{d}a_{j}V_{\xi_{j},\mu}^{\,2^{*}_{k}-1}}{\sum \limits_{i=1}^{d}a_{i}V_{\xi_{i},\mu}}\)^{n/2k}dv_{g}\]^{2k/n}.
\end{align}
We next estimate the term
\begin{align*}
\bigints_{M}\(\frac{\sum \limits_{j=1}^{d}a_{j}V_{\xi_{j},\mu}^{\,2^{*}_{k}-1}}{\sum \limits_{i=1}^{d}a_{i}V_{\xi_{i},\mu}}\)^{n/2k}dv_{g}=\bigints_{M}\(\sum \limits_{j=1}^{d}\frac{a_{j}V_{\xi_{j},\mu}}{\sum \limits_{i=1}^{d}a_{i}V_{\xi_{i},\mu}}V_{\xi_{j},\mu}^{\,2^{*}_{k}-2}\)^{n/2k}\,dv_{g}.
\end{align*}
By the convexity of $x\mapsto x^{n/2k}$ and since $\frac{2^{*}_{k}-2}{2^{*}_{k}}=\frac{2k}{n}$, we obtain
\begin{align}\label{est:mix:1}
&\bigints_{M}\(\frac{\sum \limits_{j=1}^{d}a_{j}V_{\xi_{j},\mu}^{\,2^{*}_{k}-1}}{\sum \limits_{i=1}^{d}a_{i}V_{\xi_{i},\mu}}\)^{n/2k}\,dv_{g}\le\sum \limits_{j=1}^{d}\bigintss_{M}\frac{a_{j}V_{\xi_{j},\mu}}{\sum \limits_{i=1}^{d}a_{i}V_{\xi_{i},\mu}}V_{\xi_{j},\mu}^{\,2^{*}_{k}}\,dv_{g}\notag\\
&\le\sum \limits_{j=1}^{d}\bigintss_{M}\frac{a_{j}V_{\xi_{j},\mu}}{a_{j}V_{\xi_{j},\mu}+\sum \limits_{i\neq j}^{d}a_{i}V_{\xi_{i},\mu}}V_{\xi_{j},\mu}^{\,2^{*}_{k}}\,dv_{g}\notag\\
&\leq\sum \limits_{j=1}^{d}\bigintsss_{M}V_{\xi_{j},\mu}^{\,2^{*}_{k}}\,dv_{g}-\bigintsss_{M}\frac{a_{\frak{i}}V_{\xi_{\frak{i}},\mu}}{a_{\frak{j}}V_{\xi_{\frak{j}},\mu}+a_{\frak{i}}V_{\xi_{\frak{i}},\mu}}V_{\xi_{\frak{j}},\mu}^{\,2^{*}_{k}}\,dv_{g}~\hbox{ for some fixed $\frak{i}\neq \frak{j}$}.
\end{align}
By symmetry we may assume that $a_{\frak{j}}\leq a_{\frak{i}}$. Fix $\theta>0$ and set 
$$\mathcal{R}_{\frak{i},\frak{j}}:=\left\{x\in M:a_{\frak{i}}V_{\xi_{\frak{j}},\mu}(x)\geq \theta \(a_{\frak{j}}V_{\xi_{\frak{j}},\mu}(x)+a_{\frak{i}}V_{\xi_{\frak{i}},\mu}(x)\)\right\}.$$
Then we have the following control from below for some fixed $\frak{i}\neq \frak{j}$.
\begin{align}\label{est:mix:2}
&\bigintsss_{M}\frac{a_{\frak{i}}V_{\xi_{\frak{i}},\mu}}{a_{\frak{j}}V_{\xi_{\frak{j}},\mu}+a_{\frak{i}}V_{\xi_{\frak{i}},\mu}}V_{\xi_{\frak{j}},\mu}^{\,2^{*}_{k}}\,dv_{g}\ge\bigintsss_{\mathcal{R}_{\frak{i},\frak{j}}}\frac{a_{\frak{i}}V_{\xi_{\frak{i}},\mu}}{a_{\frak{j}}V_{\xi_{\frak{j}},\mu}+a_{\frak{i}}V_{\xi_{\frak{i}},\mu}}V_{\xi_{\frak{j}},\mu}^{\,2^{*}_{k}}\,dv_{g}\notag\\
&\ge\theta\int_{\mathcal{R}_{\frak{i},\frak{j}}}V_{\xi_{\frak{i}},\mu}\,V_{\xi_{\frak{j}},\mu}^{\,2^{*}_{k}-1}\,dv_{g}\geq\theta\[\,\mathcal{Q}_{\frak{j}\frak{i}}-\int_{M\setminus\mathcal{R}_{\frak{i},\frak{j}}}V_{\xi_{\frak{i}},\mu}V_{\xi_{\frak{j}},\mu}^{\,2^{*}_{k}-1}\,dv_{g}\]\notag\\
&\ge\theta\[\,\mathcal{Q}_{\frak{j}\frak{i}}-\theta^{2^{*}_{k}-2}\bigintsss_{M\setminus\mathcal{R}_{\frak{i},\frak{j}}}\(\frac{a_{\frak{j}}}{a_{\frak{i}}}V_{\xi_{\frak{j}},\mu}+V_{\xi_{\frak{i}},\mu}\)^{\,2^{*}_{k}-2}V_{\xi_{\frak{i}},\mu}V_{\xi_{\frak{j}},\mu}\,dv_{g}\]\notag\\
&\ge\theta\[\mathcal{Q}_{\frak{j}\frak{i}}-\widetilde{C}\theta^{2^{*}_{k}-2}\int_{M}\[\(\frac{a_{\frak{j}}}{a_{\frak{i}}}V_{\xi_{\frak{j}},\mu}\)^{\,2^{*}_{k}-2}+V_{\xi_{\frak{i}},\mu}^{\,2^{*}_{k}-2}\] V_{\xi_{\frak{i}},\mu}V_{\xi_{\frak{j}},\mu}\,dv_{g}\]~\hbox{ for some $\widetilde{C}>0$}\notag\\
&\ge\theta\[\mathcal{Q}_{\frak{j}\frak{i}}-\widetilde{C}\theta^{2^{*}_{k}-2}\(\mathcal{Q}_{\frak{j}\frak{i}}+\mathcal{Q}_{\frak{i}\frak{j}}\)\]
\end{align}
Therefore for any $\frak{i}\neq \frak{j}$, from \eqref{est:mix:1} and \eqref{est:mix:2}, choosing $\theta$ small and using \eqref{int.7} from Proposition \ref{prop:interact}, we can write for some $\theta_{\sharp}>0$
\begin{align}
\bigints_{M}\(\frac{\sum \limits_{j=1}^{d}a_{j}V_{\xi_{j},\mu}^{\,2^{*}_{k}-1}}{\sum \limits_{i=1}^{d}a_{i}V_{\xi_{i},\mu}}\)^{n/2k}\,dv_{g}\le&\,\sum \limits_{j=1}^{d}\int_{M}V_{\xi_{j},\mu}^{\,2^{*}_{k}}\,dv_{g}-\theta\[\mathcal{Q}_{\frak{j}\frak{i}}-\widetilde{C}\theta^{2^{*}_{k}-2}\(\mathcal{Q}_{\frak{j}\frak{i}}+\mathcal{Q}_{\frak{i}\frak{j}}\)\]\notag\\
\le&\,\sum \limits_{j=1}^{d}\int_{M}V_{\xi_{j},\mu}^{\,2^{*}_{k}}\,dv_{g}-\theta_{\sharp}\varepsilon_{\frak{i}\frak{j}}.
\end{align}
Coming back to \eqref{num/dum1}, we have then obtained for some $\theta_{\sharp}>0$(making it smaller if necessary), that for any $\frak{i}\neq \frak{j}$
\begin{align}\label{num/dum2}
\mathcal{J}_{g,k}\Big(\sum \limits_{i=1}^{d}a_{i}V_{\xi_{i},\mu}\Big)&\le~d^{2k/n}Y_{k}(\S^{n})\notag\\
&\times\(1+\smallo_{\Xi_{d},\mu}(1)+\bigO_{\Xi_{d}}\(\delta^{n-2k}\)\)\(1+\bigO_{\Xi_{d}}(\mu^{n-2k})-\frac{\theta_{\sharp}}{d \|B_{0}\|_{2^{*}_{k}}^{2^{*}_{k}}}\varepsilon_{\frak{i}\frak{j}}\)^{2k/n}\notag\\
\end{align}
Taking  $\mu$ and $\delta$ sufficiently small  we obtain \eqref{energy_sum0}.
$$\mathcal{J}_{g,k}\(\sum \limits_{i=1}^{d}a_{i}V_{\xi_{i},\mu_{i}}\)\le~\(d+\frac{1}{2}\)^{2k/n}Y_{k}(\S^{n}).$$
Furthermore, in case $\displaystyle \sum\limits_{1\le i\neq j\le d}\varepsilon_{ij}>\varepsilon$, \eqref{num/dum2} implies for some small constant $\theta_{\sharp}>0$(with a slight abuse of notation), that 
\begin{align}\label{num/dum3}
\mathcal{J}_{g,k}\(\sum \limits_{i=1}^{d}a_{i}V_{\xi_{i},\mu_{i}}\)\leq&\, d^{2k/n}Y_{k}(\S^{n})\(1+\smallo_{\Xi_{d},\mu}(1)+\bigO_{\Xi_{d}}\(\delta^{n-2k}\)-\frac{\theta_{\sharp}}{d^{2}(d-1)}\varepsilon\).
\end{align}
Taking  $\mu$ and $\delta$ sufficiently small (depending on $d$ and $\varepsilon$) gives us \eqref{energy_sum1}. \qed
\medskip

\noindent
{\bf{Proof of \eqref{energy_sum2}}:} We now consider the case when the sum of interactions $\sum\limits_{1\le i\neq j\le d}\varepsilon_{ij}$ is small.  We have
\begin{align}\label{num2}
&\mathcal{N}\Big(\sum \limits_{i=1}^{d}a_{i}V_{\xi_{i},\mu_{i}}\Big)=\sum \limits_{i}^{d}a_{i}^{2}\int_{M}V_{\xi_{i},\mu}\,P_{g}V_{\xi_{i},\mu}\,dv_{g}+\sum \limits_{i\neq j}^{d}a_{i}a_{j}\mathfrak{L}_{ij}\notag\\
&~=\sum \limits_{i}^{d}a_{i}^{2}\mathcal{J}_{g,k}(V_{\xi_{i},\mu})\mathcal{D}(V_{\xi_{i},\mu})+\|B_{0}\|_{2^{*}_{k}-1}^{2^{*}_{k}-1}\sum\limits_{1\le i\neq j\le d}a_{i}a_{j}\,\varepsilon_{ij}\(1+\smallo_{\Xi_{d};\varepsilon_{ij}}(1)+\bigO_{\Xi_{d}}(\delta)\).
\end{align}
\medskip

\noindent
The following inequality gives us a lower bound on the denominator involving the interactions in a straightforward manner.  Moreover, the estimates this yields are uniform in $d$, the number of bubbles. 
\smallskip

Fix $p>2$. There exists $\kappa_{p}>1$ such that any positive integer $d$ and $X_{1},X_{2},\ldots,X_{d}\geq0$, we have
\begin{align}
\Big(\sum\limits_{i=1}^{d}X_{i}\Big)^{p}\ge \sum\limits_{i=1}^{d}X_{i}^{\,p}+\frac{p}{2}\kappa_{p} \sum\limits_{1\le j\ne i \le d}X_{i}^{p-1}X_{j}.
\end{align}
See, for instance, Lemma A2. in \cite{BahriBrezis}. 
\smallskip

\noindent
Then, along with  \eqref{int.10}, this inequality implies that 
\begin{align}
&\Bigg[\mathcal{D}\Big(\sum \limits_{i=1}^{d}a_{i}V_{\xi_{i},\mu}\Big)\Bigg]^{2^{*}_{k}/2}\,=\int_{M}\Big(\sum \limits_{i=1}^{d}a_{i}V_{\xi_{i},\mu}\Big)^{2^{*}_{k}}\,dv_{g}\notag\\
&\ge\sum \limits_{i=1}^{d}a_{i}^{\,2^{*}_{k}}\int_{M}V_{\xi_{i},\mu}^{\,2^{*}_{k}}\,dv_{g}+\frac{2^{*}_{k}}{2}\kappa_{2^{*}_{k}}\sum \limits_{1\leq i\neq j\leq d} a_{i}^{2^{*}_{k}-1}\,a_{j}\int_{M}V_{\xi_{j},\mu}\,V_{\xi_{i},\mu}^{\,2^{*}_{k}-1}\,dv_{g}\notag\\
&\ge\sum \limits_{i=1}^{d}a_{i}^{2^{*}_{k}}\int_{M}V_{\xi_{i},\mu}^{\,2^{*}_{k}}\,dv_{g}+\frac{2^{*}_{k}}{2}\kappa_{2^{*}_{k}}\|B_{0}\|_{2^{*}_{k}-1}^{2^{*}_{k}-1}\,\sum \limits_{1\leq i\neq j\leq d}a_{i}^{2^{*}_{k}-1}\,a_{j}\,\varepsilon_{ij}\(1+\smallo_{\Xi_{d};\varepsilon_{ij}}(1)+\bigO_{\Xi_{d}}(\delta)\).
\end{align}
Combining the estimates for the numerator and the denominator, we obtain the following bound on the energy for all $d\ge2$ for $\sum \limits_{1\le i\neq j\le d}\varepsilon_{ij}$ sufficiently small. 
\begin{align}\label{num/dum4}
&\mathcal{J}_{g,k}\Big(\sum \limits_{i=1}^{d}a_{i}V_{\xi_{i},\mu}\Big)\leq\[\sum\limits_{i}^{d}a_{i}^{2}\mathcal{J}_{g,k}(V_{\xi_{i},\mu})\mathcal{D}(V_{\xi_{i},\mu})+\sum\limits_{1\le i\ne j \le d}a_{i}a_{j}\varepsilon_{ij}\|B_{0}\|_{2^{*}_{k}-1}^{2^{*}_{k}-1}\right.\notag\\
&~\times\(1+\smallo_{\Xi_{d};\varepsilon_{ij}}(1)+\bigO_{\Xi_{d}}(\delta)\)\Bigg]\[\Big(\sum \limits_{i=1}^{d}a_{i}^{2^{*}_{k}}\mathcal{D}\(V_{\xi_{i},\mu}\)^{\frac{2^{*}_{k}}{2}}\Big)^{-2/2^{*}_{k}}\right.\notag\\
&~\left.-\kappa_{2^{*}_{k}}\,\|B_{0}\|_{2^{*}_{k}-1}^{2^{*}_{k}-1}\frac{\sum \limits_{1\leq i\neq j\leq d}a_{j}\,a_{i}^{2^{*}_{k}-1}\varepsilon_{ij}\(1+\smallo_{\Xi_{d};\varepsilon_{ij}}(1)+\bigO_{\Xi_{d}}(\delta)\)}{\(\sum\limits_{i=1}^{d}a_{i}^{2^{*}_{k}}\displaystyle\int_{M}V_{\xi_{i},\mu}^{\,2^{*}_{k}}\,dv_{g}\)^{\frac{2^{*}_{k}+2}{2^{*}_{k}}}}\]\notag\\
\leq&\frac{\displaystyle\sum\limits_{i}^{d}a_{i}^{2}\mathcal{J}_{g,k}(V_{\xi_{i},\mu})\mathcal{D}(V_{\xi_{i},\mu})}{\(\sum \limits_{i=1}^{d}a_{i}^{2^{*}_{k}}\mathcal{D}\(V_{\xi_{i},\mu}\)^{\frac{2^{*}_{k}}{2}}\)^{2/2^{*}_{k}}}+\frac{\displaystyle\sum \limits_{1\le i\neq j\le d}a_{i}a_{j}\varepsilon_{ij}\|B_{0}\|_{2^{*}_{k}-1}^{2^{*}_{k}-1}\(1+\smallo_{\Xi_{d};\varepsilon_{ij}}(1)+\bigO_{\Xi_{d}}(\delta)\)}{\(\sum \limits_{i=1}^{d}a_{i}^{2^{*}_{k}}\displaystyle\int_{M}V_{\xi_{i},\mu}^{\,2^{*}_{k}}\,dv_{g}\)^{2/2^{*}_{k}}}\notag\\
&-\kappa_{2^{*}_{k}}\|B_{0}\|_{2^{*}_{k}-1}^{2^{*}_{k}-1}\|B_{0}\|_{2^{*}_{k}}^{2^{*}_{k}}\,\Big(\sum\limits_{i=1}^{d}a_{i}^{2}\Big)\frac{\sum \limits_{1\leq i\neq j\leq d}a_{j}\,a_{i}^{2^{*}_{k}-1}\varepsilon_{ij}\(1+\smallo_{\Xi_{d};\mu}(1)+\smallo_{\Xi_{d};\varepsilon_{ij}}(1)+\bigO_{\Xi_{d}}(\delta)\)}{\(\sum\limits_{i=1}^{d}a_{i}^{2^{*}_{k}}\displaystyle\int_{M}V_{\xi_{i},\mu}^{\,2^{*}_{k}}\,dv_{g}\)^{\frac{2^{*}_{k}+2}{2^{*}_{k}}}}.
\end{align}
In the last line we have used the estimate for $\mathcal{N}(V_{\xi_{i},\mu})$.
Rearranging the terms and also using the estimate for $\mathcal{D}(V_{\xi_{i},\mu})$, we obtain that
\begin{align}\label{num/dum5}
&\mathcal{J}_{g,k}\Big(\sum \limits_{i=1}^{d}a_{i}V_{\xi_{i},\mu}\Big)\leq \max\limits_{1\le i\le d}J_{g}(V_{\xi_{i},\mu})\frac{\sum\limits_{i}^{d}a_{i}^{2}\,\mathcal{D}(V_{\xi_{i},\mu})}{\(\sum \limits_{i=1}^{d}a_{i}^{2^{*}_{k}}\mathcal{D}\(V_{\xi_{i},\mu}\)^{\frac{2^{*}_{k}}{2}}\)^{2/2^{*}_{k}}}+\|B_{0}\|_{2^{*}_{k}-1}^{2^{*}_{k}-1}\notag\\
&~\times\sum\limits_{1\le i\ne j \le d}\(1-\kappa_{2^{*}_{k}}\frac{a_{i}^{2^{*}_{k}-2}\Big(\sum\limits_{i=1}^{d}a_{i}^{2}\Big)}{\sum\limits_{i=1}^{d}a_{i}^{2^{*}_{k}}}\)a_{i}a_{j}\varepsilon_{ij}\frac{\(1+\smallo_{\Xi_{d};\mu}(1)+\smallo_{\Xi_{d};\varepsilon_{ij}}(1)+\bigO_{\Xi_{d}}(\delta)\)}{\(\sum \limits_{i=1}^{d}a_{i}^{2^{*}_{k}}\displaystyle\int_{M}V_{\xi_{i},\mu}^{\,2^{*}_{k}}\,dv_{g}\)^{2/2^{*}_{k}}}.
\end{align}
Here $\kappa_{2^{*}_{k}}>1$.  Using the estimates for $\mathcal{J}_{g,k}(V_{\xi_{i},\mu})$ and $\mathcal{D}(V_{\xi_{i},\mu})$, we can extract the following bound on the energy.
\begin{align}\label{num/dum6}
\mathcal{J}_{g,k}\Big(\sum \limits_{i=1}^{d}a_{i}V_{\xi_{i},\mu}\Big)\leq&~Y_{k}(\S^{n})\(1+\bigO_{\Xi_{d}}(\mu^{n-2k})\)\frac{\sum\limits_{i}^{d}a_{i}^{2}}{\(\sum \limits_{i=1}^{d}a_{i}^{2^{*}_{k}}\)^{2/2^{*}_{k}}}+\frac{\|B_{0}\|_{2^{*}_{k}-1}^{2^{*}_{k}-1}}{\|B_{0}\|_{2^{*}_{k}}^{2}}\notag\\
&\times\sum\limits_{1\le i\neq j\le d}\frac{a_{i}a_{j}}{\(\sum \limits_{i=1}^{d}a_{i}^{2^{*}_{k}}\)^{2/2^{*}_{k}}}\(1+\smallo_{\Xi_{d};\xi,\mu}(1)+\smallo_{\Xi_{d};\varepsilon_{ij}}(1)+\bigO_{\Xi_{d}}(\delta)\)\varepsilon_{ij}\notag\\
\leq&~\frac{\sum\limits_{i}^{d}a_{i}^{2}}{\(\sum \limits_{i=1}^{d}a_{i}^{2^{*}_{k}}\)^{2/2^{*}_{k}}}Y_{k}(\S^{n})\Bigg[1+\bigO_{\Xi_{d}}(\mu^{n-2k})+\frac{\|B_{0}\|_{2^{*}_{k}-1}^{2^{*}_{k}-1}}{\|B_{0}\|_{2^{*}_{k}}^{2^{*}_{k}}}\notag\\
&\times\sum\limits_{1\le i\neq j\le d}\frac{a_{i}a_{j}}{\sum\limits_{i}^{d}a_{i}^{2}} \varepsilon_{ij}\(1+\smallo_{\Xi_{d},\mu}(1)+\smallo_{\Xi_{d}\varepsilon_{ij}}(1)+\bigO_{\Xi_{d}}(\delta)\)\Bigg].\notag\\
\end{align}
If $\dfrac{a_{i_{0}}}{a_{j_{0}}}>\tau>1$ for some $i_{0}\neq j_{0}$, then $\dfrac{\sum\limits_{i=1}^{d}a_{i}^{2}}{\Big(\sum \limits_{i=1}^{d}a_{i}^{2^{*}_{k}}\Big)^{2/2^{*}_{k}}}\leq d^{2k/n}\(1-\dfrac{\theta_{\tau}}{d}\)$ for some $\theta_{\tau}>0$  (for instance, see Lemma A4. in \cite{BahriBrezis}). This inequality then implies the following bound on the energy.
\begin{align}\label{num/dum7}
\mathcal{J}_{g,k}\Big(\sum \limits_{i=1}^{d}a_{i}V_{\xi_{i},\mu}\Big)\leq&~d^{2k/n}\,Y_{k}(\S^{n})\Bigg[1+\bigO_{\Xi_{d}}(\mu^{n-2k})-\dfrac{\theta_{\tau}}{d}\notag\\
&+\frac{\|B_{0}\|_{2^{*}_{k}-1}^{2^{*}_{k}-1}}{\|B_{0}\|_{2^{*}_{k}}^{2^{*}_{k}}}\sum\limits_{1\le i\neq j\le d}\varepsilon_{ij}\(1+\smallo_{\Xi_{d};\mu}(1)+\smallo_{\Xi_{d};\varepsilon_{ij}}(1)+\bigO_{\Xi_{d}}(\delta)\)\Bigg].
\end{align}
We then obtain \eqref{energy_sum2} by taking  $\mu$ and $\delta$ sufficiently small (depending on $d$, $\varepsilon$ and $\tau$) along with \eqref{energy_sum1}.  \qed
\medskip

\noindent
{\bf{Proof of \eqref{energy_sum3_main}}:} We again assume that $d\ge2$ and $\sum \limits_{i\neq j}^{d}\varepsilon_{ij}$ sufficiently small. Moreover, we suppose that $\left|1-\dfrac{a_{i}}{a_{j}}\right|\le\widehat{\tau}$ for all $i\neq j$ with $\widehat{\tau}$ small. From \eqref{num/dum5} we then deduce the following. 
\begin{align}\label{num/dum8}
&\mathcal{J}_{g}\Big(\sum \limits_{i=1}^{d}a_{i}V_{\xi_{i},\mu}\Big)\leq\,\frac{\sum\limits_{i}^{d}a_{i}^{2}}{\(\sum \limits_{i=1}^{d}a_{i}^{2^{*}_{k}}\)^{2/2^{*}_{k}}}Y_{k}(\S^{n})\left[1+\bigO_{\Xi_{d}}(\mu^{n-2k})+\frac{\|B_{0}\|_{2^{*}_{k}-1}^{2^{*}_{k}-1}}{\|B_{0}\|_{2^{*}_{k}}^{2^{*}_{k}}}\right.\notag\\
&\left.\times\sum\limits_{1\le i\ne j \le d}\(1-\kappa_{2^{*}_{k}}\frac{a_{i}^{2^{*}_{k}-2}\Big(\sum\limits_{i=1}^{d}a_{i}^{2}\Big)}{\sum\limits_{i=1}^{d}a_{i}^{2^{*}_{k}}}\)\frac{a_{i}a_{j}}{\sum\limits_{i}^{d}a_{i}^{2}} \varepsilon_{ij}\(1+\smallo_{\Xi_{d};\mu}(1)+\smallo_{\Xi_{d};\varepsilon_{ij}}(1)+\bigO_{\Xi_{d}}(\delta)\)\right]\notag\\
&\leq\,d^{2k/n}\,Y_{k}(\S^{n})\left[1+\bigO_{\Xi_{d}}(\mu^{n-2k})+\(1-\kappa_{2^{*}_{k}}
\(\frac{1-\widehat{\tau}}{1+\widehat{\tau}}\)^{2^{*}_{k}}\)\frac{\|B_{0}\|_{2^{*}_{k}-1}^{2^{*}_{k}-1}}{\|B_{0}\|_{2^{*}_{k}}^{2^{*}_{k}}}\right.\notag\\
&~\left.\times\sum\limits_{1\le i\ne j \le d}\frac{a_{i}a_{j}}{\sum\limits_{i}^{d}a_{i}^{2}} \varepsilon_{ij}\(1+\smallo_{\Xi_{d};\mu}(1)+\smallo_{\Xi_{d};\varepsilon_{ij}}(1)+\bigO_{\Xi_{d}}(\delta)\)\right]\notag\\
&\leq\,d^{2k/n}\,Y_{k}(\S^{n})\left[1+\bigO(\mu^{n-2k})+\(1-\kappa_{2^{*}_{k}}
\Bigg(\frac{1-\widehat{\tau}}{1+\widehat{\tau}}\)^{2^{*}_{k}}\Bigg)\(\frac{1-\widehat{\tau}}{1+\widehat{\tau}}\)^{2}\right.\notag\\
&~\left.\times\frac{\|B_{0}\|_{2^{*}_{k}-1}^{2^{*}_{k}-1}}{d\,\|B_{0}\|_{2^{*}_{k}}^{2^{*}_{k}}}\sum\limits_{1\le i\ne j \le d}\varepsilon_{ij}\(1+\smallo_{\Xi_{d};\mu}(1)+\smallo_{\Xi_{d};\varepsilon_{ij}}(1)+\bigO_{\Xi_{d}}(\delta)\)\right].
\end{align}
Here we have used that $\dfrac{\sum\limits_{i}^{d}a_{i}^{2}}{\(\sum \limits_{i=1}^{d}a_{i}^{2^{*}_{k}}\)^{2/2^{*}_{k}}}\leq d^{2k/n}$. So, taking $\widehat{\tau}$ sufficiently close to $0$, we obtain that for all $d\ge1$ the following bound on the energy holds.  
\begin{align}\label{num/dum9}
\mathcal{J}_{g}\Big(\sum \limits_{i=1}^{d}a_{i}V_{\xi_{i},\mu}\Big)&\leq\,d^{2k/n}\,Y_{k}(\S^{n})\left[1+\bigO_{\Xi_{d}}(\mu^{n-2k})-\kappa_{\#}\frac{\|B_{0}\|_{2^{*}_{k}-1}^{2^{*}_{k}-1}}{d\,\|B_{0}\|_{2^{*}_{k}}^{2^{*}_{k}}}\right.\notag\\
&~\left.\times\sum\limits_{1\le i\ne j \le d}\varepsilon_{ij}\(1+\smallo_{\Xi_{d};\mu}(1)+\smallo_{\Xi_{d};\varepsilon_{ij}}(1)+\bigO_{\Xi_{d}}(\delta)\)\right],
\end{align}
for some constant $\kappa_{\#}>0$. Taking $\mu,\delta>0$ sufficiently small we have thus obtained that 
\begin{align}\label{num/dum10}
\mathcal{J}_{g}\Big(\sum \limits_{i=1}^{d}a_{i}V_{\xi_{i},\mu}\Big)&\leq\,d^{2k/n}\,Y_{k}(\S^{n})\left[1-\mathfrak{m}_{g}\mu^{n-2k}-\kappa_{\#}\frac{\|B_{0}\|_{2^{*}_{k}-1}^{2^{*}_{k}-1}}{2d\,\|B_{0}\|_{2^{*}_{k}}^{2^{*}_{k}}}\sum\limits_{1\le i\ne j \le d}\varepsilon_{ij}\right],\notag\\
\end{align}
for some constant $\mathfrak{m}_{g}\in\R$. Recall 
$$\varepsilon_{ij}=\(2+\mathfrak{c}_{n,k}^{-1}\frac{(b_{n,k}^{-1}G_{g}(\xi_{i},\xi_{j}))^{\frac{2}{2k-n}}}{\mu^{2}}\)^{\frac{2k-n}{2}}=\mathfrak{c}_{n,k}^{\frac{n-2k}{2}}\,b_{n,k}^{-1}G_{g}(\xi_{i},\xi_{j})\,\mu^{n-2k}\(1+\smallo_{\varepsilon_{ij}}(1)\).$$
Thus for $\sum \limits_{i\neq j}^{d}\varepsilon_{ij}$ sufficiently small  one has
\begin{align}
\sum \limits_{i\neq j}^{d}\varepsilon_{ij}\ge \frac{1}{2}d(d-1)\mathfrak{c}_{n,k}^{\frac{n-2k}{2}}\,b_{n,k}^{-1}\min\limits_{x\neq y}G_{g}(x,y)\,\mu^{n-2k}.
\end{align}
This gives us the claimed energy bound  \eqref{energy_sum3_main}  in terms of the Green's function for all $d\ge2$.
\begin{align*}
&\mathcal{J}_{g,k}\bigg(\sum \limits_{i=1}^{d}V_{\xi_{i},\mu}\bigg)\leq d^{2k/n}\,Y_{k}(\S^{n})\[1-\(\mathfrak{m}_{g}+(d-1)\,\mathfrak{b}_{n,k}\,\min\limits_{x\neq y}G_{g}(x,y)\)\mu^{n-2k}\],
\end{align*}
with $\mathfrak{b}_{n,k}:=\dfrac{\kappa_{\#}}{4}\dfrac{\|B_{0}\|_{2^{*}_{k}-1}^{2^{*}_{k}-1}}{\|B_{0}\|_{2^{*}_{k}}^{2^{*}_{k}}}\mathfrak{c}_{n,k}^{\frac{n-2k}{2}}\,b_{n,k}^{-1}>0$ and some  constant $\mathfrak{m}_{g}\in\R$. \qed

\medskip
This completes the proof of the Proposition \ref{main:energy esti}.
\end{proof}
\medskip

\noindent
An immediate consequence of Proposition \ref{main:energy esti} and the positivity of the Green's function \eqref{positivity}  is the following bound on the energy of the sum of bubbles, which signifies that \emph{interactions overwhelm the contribution of mass} at very high energy levels in the energy expansion. 
\begin{corollary}\label{energy loss}
Let $k\ge1$ be an integer, and $\(M,g\)$ be a smooth, closed Riemannian manifold of dimension $2k+1\le n\le 2k+3$ or $(M,g) $ be locally conformally flat. Assume that the GJMS operator $P_{g}$ satisfies \eqref{positivity}, and let $d\in\N$. Let $\xi_{1},\cdots,\xi_{d}\in M$ and $\mu\in (0,\delta)$ be sufficiently small such that the error estimate \eqref{error1} holds. There exists $d_{\star}\in N$ large and  $\mu_{\star},\delta_{\star}>0$ small such that, for all $0<\mu\le\mu_{\star}$, $0<\delta\le\delta_{\star}$,
the energy of the sum of $d$ bubbles $\sum \limits_{i=1}^{d}a_{i}V_{\xi_{i}\mu}$, with weights $a_{1},\ldots,a_{d}\in (0,+\infty)$ satisfies:
\begin{align}\label{energy_sum_loss}
\mathcal{J}_{g,k}\Big(\sum \limits_{i=1}^{d}a_{i}V_{\xi_{i},\mu_{i}}\Big)\le&\,\(d+1/2\)^{2k/n}Y_{k}(\S^{n})\hbox{ for  }d\ge 1,\notag\\
\mathcal{J}_{g,k}\Big(\sum \limits_{i=1}^{d}a_{i}V_{\xi_{i},\mu_{i}}\Big)<&\,d^{\,2k/n}\,Y_{k}(\S^{n})~\hbox{ for  }d\ge d_{\star}.
\end{align}
\end{corollary}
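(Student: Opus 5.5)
The corollary follows from Proposition \ref{main:energy esti} by splitting the set of configurations $(a_1,\ldots,a_d;\xi_1,\ldots,\xi_d)$ into the three regimes treated there. The first inequality in \eqref{energy_sum_loss} is essentially \eqref{energy_sum0}, which holds uniformly in $d\ge2$ once $\mu\le\mu_*$ and $\delta\le\delta_*$. For $d=1$ it follows from \eqref{energy0}: the functional is homogeneous of degree zero, so the weight $a_1$ drops out, and $\mathcal{J}_{g,k}(V_{\xi,\mu})=Y_k(\S^n)(1+\smallo(1))<(3/2)^{2k/n}Y_k(\S^n)$ for $\mu,\delta$ small, with the smallness in \eqref{energy0} chosen as in Proposition \ref{prop:error}, i.e.\ $\mu\ll\delta$.

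For the strict inequality, I first fix the uniform constants $\tau_*,\varepsilon_*,\mu_*,\delta_*$ of item (4), together with $\mathfrak{m}_g\in\R$ and $\mathfrak{b}_{n,k}>0$. By \eqref{positivity} and \eqref{bounds.Green}, $\min_{x\neq y}G_g(x,y)\ge C^{-1}\operatorname{diam}(M)^{2k-n}>0$. I then choose $d_\star$ so large that $\mathfrak{m}_g+(d-1)\mathfrak{b}_{n,k}\min G_g>0$ for all $d\ge d_\star$. Then in the regime $a_i/a_j\le\tau_*$ for all $i\ne j$ and $\sum_{i\ne j}\varepsilon_{ij}<\varepsilon_*$, \eqref{energy_sum3_main} gives $\mathcal{J}_{g,k}<d^{2k/n}Y_k(\S^n)$ for every $\mu\in(0,\mu_*]$.

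The two remaining regimes are covered by the other items. If $\sum_{i\ne j}\varepsilon_{ij}\ge\varepsilon_*$, I apply \eqref{energy_sum1} with $\varepsilon=\varepsilon_*/2$. If $\sum_{i\ne j}\varepsilon_{ij}<\varepsilon_*$ but some ratio $a_{i_0}/a_{j_0}>\tau_*$, I apply \eqref{energy_sum2} with $\tau=\tau_*$ and $\varepsilon=\varepsilon_*$. Each case yields thresholds $\mu_*(\cdot,d)$ and $\delta_*(\cdot,d)$, and $\mu_\star,\delta_\star$ are taken to be the minima of these with the ones from item (4).

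\textbf{Main obstacle.} The thresholds in items (2) and (3) depend on $d$, so the minimum over all $d\ge d_\star$ could degenerate to zero. I would handle this as follows. In the topological argument of Section \ref{sec:BahriCoron}, the contradiction only needs the strict inequality at a single level $d=d_\star$. I would therefore read the statement as holding for each fixed $d\ge d_\star$ with $\mu_\star,\delta_\star$ depending on $d$, or simply fix $d=d_\star$ and take minima over the finitely many constants involved. If genuine uniformity in $d$ were needed, I would instead try to make the $\theta_\sharp/d^2(d-1)$ loss in \eqref{num/dum3} and the $\theta_\tau/d$ loss in \eqref{num/dum7} dominate the $\smallo(1)$ errors uniformly in $d$. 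Since the error terms there are only controlled pointwise in $\Xi_d$, I expect that route to be harder, and I would present the proof in the $d$-dependent form.
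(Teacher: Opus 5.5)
Your proposal is correct and follows the paper's route. The paper presents the corollary as an immediate consequence of Proposition~\ref{main:energy esti}: item (1) together with \eqref{energy0} gives the first bound, items (2)--(4) cover all configurations for the strict one, and the positive lower bound on $G_g$ lets you choose $d_\star$ with $\mathfrak{m}_g+(d-1)\mathfrak{b}_{n,k}\min_{x\neq y}G_g(x,y)>0$.

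Your caveat is also well founded. The thresholds in items (2) and (3) depend on $d$, so the paper's derivation likewise only yields $\mu_\star,\delta_\star$ for each fixed $d\ge d_\star$. That is all the contradiction in Section~\ref{sec:BahriCoron} uses: the single level $d=d_\star$, plus the finitely many lower levels in the induction of Proposition~\ref{topo prop}.
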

\bigskip

\section{Existence via the Bahri-Coron technique}\label{sec:BahriCoron}

The solutions of eq. \eqref{eq:one} are obtained as critical points of the energy functional $\mathcal{J}_{g,k}$ (defined in \eqref{energy}) on $\mathcal{M}^{+}_{k}$ (defined in \eqref{constraint}).  To prove Theorem \ref{thm:main}, we proceed by contradiction and assume that eq. \eqref{eq:one} has no smooth positive solutions, or equivalently by Proposition \ref{crit_pts}, the functional $\mathcal{J}_{g,k}$ has no critical points in $\mathcal{M}_{k}^{+}$. 
A standard technique to identify a critical point is to use a (pseudo) gradient flow associated to \eqref{energy}. However, our problem is noncompact, and the flow develops \emph{bubbles}.  
\medskip

\noindent
The technique developed and pioneered by Bahri \cite{Bahri}, Bahri-Coron \cite{BahriCoron} exploits this non-compactness to map barycenters of $M$ to the energy sublevels of $\mathcal{J}_{g,k}$ in a topologically non-trivial way. The energy expansion in Section \ref{sec:Energy} will then be used to obtain a contradiction.  Accordingly, we borrow heavily from their work and refer the reader to \cite{Bahri}, \cite{BahriCoron}, \cite{BahriBrezis}, \cite{MayNdi1} for proofs of the arguments that are well established and have been widely used in this context. We also refer to texts Dold \cite{Dold} and Chang \cite{KC Chang} for a review of algebraic topology, Banach manifolds and deformation lemmas. 
\medskip

As first introduced in \cite{Bahri, BahriCoron} and more recently used in \cite{MayNdi1}, we define a neighbourhood of non-compact flow lines or {\emph{critical points at infinity}}  of the energy functional $\mathcal{J}_{g,k}$ around the sum of $d\in\N$ bubbles,  for $\varepsilon>0$ small, as follows.  
\begin{align}\label{v-ngbd}
\mathscr{V}(d,\varepsilon):=&\bigg\{u\in H^{k}(M),\,u\ge0 \text{ a.e.}:  \exists (\xi_{1},\ldots,\xi_{d})\in M^{d},\, (\mu_{1},\ldots, \mu_{d})\in (0,\varepsilon)^{d},\notag\\
&~\text{ with } \Big\|u-\sum \limits_{i=1}^{d}V_{\xi_{i},\mu_{i}}\Big\|_{H^{k}(M)}\le\varepsilon~\text{ and} \sum\limits_{1\le i\ne j\le d}\varepsilon_{ij}\le \varepsilon\,\bigg\}.
\end{align}
The interactions $\varepsilon_{ij}$ are defined by \eqref{int.quant} and the buubles $V_{\xi,\mu_{i}}$ are defined by \eqref{bubble3b}, with $\delta>0$ chosen sufficiently small so that the error estimate \eqref{error1} holds for all $1\le i\le d$.  We have the now-classical profile or bubble tree decomposition. 
\begin{proposition}[Struwe type decomposition]\label{bubble decomp} 
Let $k\ge1$ be an integer, and let $\(M,g\)$ be a smooth, closed Riemannian manifold of dimension $2k+1\le n\le 2k+3$ or $(M,g) $ be locally conformally flat. Assume that the GJMS operator $P_{g}$ satisfies \eqref{positivity}.

Let $(u_{\alpha})_{\alpha}$ be a sequence in $\mathcal{M}_{k}^{+}$ such that $\mathcal{J}_{g,k}(u_{\alpha})$ is bounded, $\nabla_{\mathcal{M}_{k}^{+}}\mathcal{J}_{g,k}(u_{\alpha})\to 0$ as $\alpha \to+\infty$, and suppose the functional $\mathcal{J}_{g,k}$ has no critical points in $\mathcal{M}_{k}^{+}$. Then there exists  $d\in \N$ and a sequence $(\varepsilon_{\alpha})_{\alpha}$ with  $\varepsilon_{\alpha}>0$ for all $\alpha$ and $\varepsilon_{\alpha}\to0$ as $\alpha \to+\infty$ such that $u_{\alpha}\in\mathscr{V}(d,\varepsilon_{\alpha})$ up to a subsequence. Note then $\mathcal{J}_{g,k}(u_{\alpha})\to d^{2k/n}Y_{k}(\S^{n})$ as $\alpha \to+\infty$.
\end{proposition}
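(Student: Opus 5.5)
The plan is the classical Struwe global compactness argument, adapted to the $2k$-th order GJMS operator and then recast in terms of the global bubbles $V_{\xi,\mu}$.

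\emph{Step 1: normalization and weak limit.} Write $\lambda_\alpha:=\mathcal{J}_{g,k}(u_\alpha)$. Since $P_g$ is coercive by \eqref{positivity} and $\int_M u_\alpha^{2^*_k}dv_g=1$, the sequence $(u_\alpha)$ is bounded in $H^k(M)$. The Palais--Smale condition on $\mathcal{M}_k^+$ gives
\[
P_g u_\alpha-\lambda_\alpha u_\alpha^{2^*_k-1}\to0 \text{ in } H^{-k}(M).
\]
Up to a subsequence, $u_\alpha\rightharpoonup u_\infty$ weakly in $H^k$ and $\lambda_\alpha\to\lambda_\infty>0$. Then $u_\infty\ge0$ is a weak solution of $P_gu=\lambda_\infty u^{2^*_k-1}$.

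If $u_\infty\not\equiv0$, the regularity and positivity argument behind Proposition \ref{crit_pts} makes a rescaling of $u_\infty$ a critical point of $\mathcal{J}_{g,k}$ on $\mathcal{M}_k^+$, which contradicts the hypothesis. Hence $u_\infty\equiv0$, and by Rellich $u_\alpha\to0$ in $H^{k-1}$.

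\emph{Step 2: iterative bubble extraction.} This is done as in Struwe, following the higher-order versions in \cite{Maz1,MazPrem}. At each stage, concentration of the $L^{2^*_k}$ norm produces points $\xi_\alpha$ and scales $\mu_\alpha\to0$. Rescale in normal coordinates for $g_{\xi_\alpha}$ by setting $\mu_\alpha^{(n-2k)/2}u_\alpha(\exp_{\xi_\alpha}(\mu_\alpha x))$. By the expansion \eqref{expansion.Pg0}, this rescaled sequence converges locally to a nonnegative nonzero solution of $\Delta_0^k w=\lambda_\infty w^{2^*_k-1}$ in $\R^n$.

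The positivity of $w$ follows from the maximum principle in the limit, since $w\ge0$ and is nonzero. Wei--Xu classification then makes $w$ a rescaled, translated copy of $B_0$, up to the constant $\lambda_\infty^{-(n-2k)/(4k)}$. Subtract the corresponding localized bubble. The remainder is again Palais--Smale, its energy drops by a fixed quantum $Y_k(\S^n)^{n/2k}$, and it still converges weakly to zero. Since the energy is bounded, the process stops after $d$ steps with an $H^k$-small remainder.

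The standard orthogonality of profiles, obtained from the rescaling invariance, gives
\[
\frac{\mu_{i,\alpha}}{\mu_{j,\alpha}}+\frac{\mu_{j,\alpha}}{\mu_{i,\alpha}}+\frac{d_g(\xi_{i,\alpha},\xi_{j,\alpha})^2}{\mu_{i,\alpha}\mu_{j,\alpha}}\to\infty ,
\]
that is, $\varepsilon_{ij}\to0$ by the comparability of $\varepsilon_{ij}$ with this quantity. The $\lambda_\infty$ factor is absorbed: the $L^{2^*_k}$ constraint equal to $1$ forces $\lambda_\infty=d^{2k/n}Y_k(\S^n)$, and the multiplicative constant becomes an overall scaling, which I would handle by normalizing as the paper does.

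\emph{Step 3: replacing local bubbles by global ones.} Replace each localized bubble $\chi B_{\xi,\mu}$ (conformally weighted by $\Lambda_\xi$) with $V_{\xi,\mu}$. The difference is supported where $d_{g_\xi}(x,\xi)\ge\delta$, and there both functions are $O(\mu^{(n-2k)/2})$ in $C^{2k}$. The $H^k$ difference is therefore $O(\mu^{(n-2k)/2}\delta^{-C})\to0$, using \eqref{bounds.Green}. This places $u_\alpha$ in $\mathscr{V}(d,\varepsilon_\alpha)$, and the energy limit $d^{2k/n}Y_k(\S^n)$ follows from \eqref{int.8}, \eqref{int.2} and $\varepsilon_{ij}\to0$.

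The main obstacle is Step 2 for $k\ge2$. Truncation does not commute with positivity, and cutoffs produce commutator terms in $P_g$. I would handle this by working with the Green representation $u_\alpha=G_g*(\lambda_\alpha u_\alpha^{2^*_k-1}+o(1))$ and citing the existing higher-order decomposition \cite{Maz1,MazPrem} rather than redoing it.
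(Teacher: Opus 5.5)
Your proposal follows the paper's route: the paper invokes the higher-order Struwe decomposition (it cites Theorem 1.3 of \cite{Maz2}, the dedicated polyharmonic decomposition, rather than \cite{Maz1,MazPrem}) and then replaces the cut-off bubbles by the global $V_{\xi,\mu}$, using that their $H^k$ distance tends to $0$ as $\mu\to0$, which is exactly your Step 3. Your extra observations are correct and make explicit what the paper leaves implicit: the weak limit must vanish by the no-critical-point hypothesis, and the common factor $\lambda_\infty^{-(n-2k)/(4k)}$ means $u_\alpha$ can only lie in $\mathscr{V}(d,\varepsilon_\alpha)$ once that set is read with weights $a_i$ (as in the selection map) or up to scaling.
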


For $k=1$ and domains in $\R^{n}$, this is the pioneering result of Struwe \cite{Struwe}. Also see  \cite{BahriCoron} and Brezis and Coron \cite{BrezisCoron} for $H$-systems. For a more recent exposition in a book form, see Hebey \cite{Hebey1} (Chapter 3).  For $k=2$ this was obtained by Hebey and Robert in \cite{HebeyRob}. References to other related works in this direction can be found, for instance, in \cite{Maz2}. 

For general $k\ge1$, the proof of the above proposition follows from Theorem 1.3 in \cite{Maz2}, with the modification that we can interchange $V_{\xi,\mu}$ and $U_{\xi,\mu}$ since $\|V_{\xi,\mu}-V_{\xi,\mu}\|_{H^{k}(M)}\to0$ as $\mu\to0$. The proof of the above proposition also implies the uniqueness of the decomposition, up to permutations, in the following sense.

Let $d\in \N$, let $(\varepsilon_{\alpha})_{\alpha}$ be a positive sequence with $\varepsilon_{\alpha}\to0$ as $\alpha \to+\infty$, and let $\sum\limits_{i=1}^{d}a_{i,\alpha}V_{\xi_{i,\alpha},\,\mu_{i,\alpha}},\,\sum\limits_{i=1}^{d}\tilde{a}_{i,\alpha}V_{\tilde{\xi}_{i,\alpha},\,\tilde{\mu}_{i,\alpha}}\in\mathscr{V}(d,\varepsilon_{\alpha})$ be two sequences such that 
\begin{align*}
\Big\|\sum\limits_{i=1}^{d}a_{i,\alpha}V_{\xi_{i,\alpha},\,\mu_{i,\alpha}}-\sum\limits_{i=1}^{d}\tilde{a}_{i,\alpha}V_{\tilde{\xi}_{i,\alpha},\,\tilde{\mu}_{i,\alpha}}\Big\|_{H^{k}(M)}=\smallo(1)~\hbox{ as }\alpha\to+\infty.
\end{align*}
Then, up to permutations, for all $1\le i\le d$
\begin{align*}
 |a_{i,\alpha}-\tilde{a}_{i,\alpha}|=\smallo(1),\, \frac{\tilde{\mu}_{i,\alpha}}{\mu_{i,\alpha}}=1+\smallo(1),\, \frac{d_{g}(\tilde{\xi}_{i,\alpha},\,\xi_{i,\alpha})^{2}}{\tilde{\mu}_{i,\alpha}\,\mu_{i,\alpha}}=\smallo(1).
\end{align*}
See Lemma A.1 in \cite{BahriCoron}, Lemma 11 in \cite{BahriBrezis}, or Lemma 3.12 in \cite{Hebey1}. With this uniqueness of the decomposition and using the orthogonality conditions implied by a minimzer, we can parametrize the set $\mathscr{V}(d,\varepsilon)$ 
as in Proposition 7 in \cite{BahriCoron}  (also see Section 6 in \cite{BahriBrezis}) and obtain the following lemma. We skip its proof for brevity since it proceeds similarly with minor modifications.   Here $\sigma_{d}$ will denote the permutation group with $d$ elements.
\begin{lemma}[Selection Map]\label{selection map}  
Let $k\ge1$ be an integer, and let $\(M,g\)$ be a smooth, closed Riemannian manifold of dimension $2k+1\le n\le 2k+3$ or $(M,g) $ be locally conformally flat. Assume that the GJMS operator $P_{g}$ satisfies \eqref{positivity} and and suppose the functional $\mathcal{J}_{g,k}$ has no critical points in $\mathcal{M}_{k}^{+}$. 

Then  for every $d\in \N$ there exists $\varepsilon_{*}=\varepsilon_{*}(d)>0$, $\widehat{\varepsilon}_{*}=\widehat{\varepsilon}_{*}(d)>0$ such that for all $u\in\mathscr{V}(d,\varepsilon)$ with $\varepsilon< \varepsilon_{*}$ the minimization problem
\begin{align*}
\min \limits_{\mathscr{R}(d,\widehat{\varepsilon}_{*})} \Big\|u-\sum\limits_{i=1}^{d}a_{i}V_{\xi_{i},\mu_{i}}\Big\|_{H^{k}(M)}
\end{align*}
has a solution which is unique up to permutations $\sigma_{d}$, where 
\begin{align*}
\mathscr{R}(d,\widehat{\varepsilon}_{*}):=&\bigg\{(\xi_{1},\ldots,\xi_{d})\in M^{d},\,(\mu_{1},\ldots, \mu_{d})\in (0,\widehat{\varepsilon}_{*})^{\,d} \, (a_{1},\ldots, a_{d})\in (0,+\infty)^{d}\\
&: \frac{a_{i}}{a_{j}}\le 2, \text{ and } \varepsilon_{ij}\le \widehat{\varepsilon}_{*}\,\text{ for } 1\le i\neq j\le d.\bigg\}. 
\end{align*}
Denoting the unique solution of the above minimization problem by $\mathfrak{s}_{d}(u)$, we define the {\bf{selection map}} $\mathfrak{s}: \mathscr{V}(d,\varepsilon)\to M^{d}/\sigma_{d}$ by $u\mapsto \big(\xi_{1}^{u},\ldots,\xi_{d}^{u}\big)$ with $\mathfrak{s}_{d}(u)=\sum\limits_{i=1}^{d}a_{i}^{u}V_{\xi_{i}^{u},\mu_{i}^{u}}$.
\end{lemma}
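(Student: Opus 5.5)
The plan is the classical Bahri--Coron argument (Proposition 7 in \cite{BahriCoron}). It rests on three ingredients: the non-degeneracy of the bubble manifold, an implicit-function argument near each configuration, and the uniqueness of the profile decomposition recorded after Proposition \ref{bubble decomp}.

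\textbf{Step 1: existence of a minimizer.} Fix $u\in\mathscr{V}(d,\varepsilon)$, so that $\|u-\sum_i V_{\xi_i,\mu_i}\|_{H^k(M)}\le\varepsilon$ for some admissible $(\xi_i,\mu_i)$. Consider minimizing sequences $(\xi_i^m,\mu_i^m,a_i^m)\in\mathscr{R}(d,\widehat{\varepsilon}_*)$ for
\[F_u(\xi,\mu,a):=\Big\|u-\sum_i a_iV_{\xi_i,\mu_i}\Big\|_{H^k(M)}.\]
The infimum is at most $\varepsilon$, because the point $a_i=1$ with the given $(\xi_i,\mu_i)$ lies in $\mathscr{R}$ once $\varepsilon<\widehat{\varepsilon}_*$.

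Using $\|V_{\xi,\mu}\|_{H^k}\approx 1$, \eqref{int.7} and the almost-orthogonality $\langle V_i,V_j\rangle\approx\varepsilon_{ij}$ from Proposition \ref{prop:interact}, I obtain that $F_u\le 2\varepsilon$ forces the following: $a_i^m$ stays in a compact subset of $(0,\infty)$, $\mu_i^m$ cannot tend to $0$ in an uncontrolled way relative to the given scales, and $\mu_i^m$ stays away from $\widehat{\varepsilon}_*$. The last point follows from the uniqueness statement after Proposition \ref{bubble decomp}: any near-minimizer is $\smallo(1)$-close, up to permutation, to the configuration $(1,\mu_i,\xi_i)$ as $\varepsilon\to0$.

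Hence near-minimizers stay in the interior of $\mathscr{R}$, meaning $a_i/a_j$ is close to $1$, $\tilde\mu_i/\mu_i$ is close to $1$, and $\varepsilon_{ij}$ is small. Compactness of $M$ together with this control gives a limit point, so a minimizer exists and lies in the interior of $\mathscr{R}(d,\widehat{\varepsilon}_*)$.

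\textbf{Step 2: uniqueness near a configuration.} I work in the rescaled parameters
\[\big(a_i,\ \log\mu_i,\ (\exp_{\xi_i})^{-1}(\cdot)/\mu_i\big).\]
Minimizers satisfy the orthogonality conditions that $u-\sum a_iV_i$ is orthogonal in $H^k$ to $V_i$, $\mu_i\partial_{\mu_i}V_i$ and $\mu_i\partial_{\xi_i}V_i$.

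The key estimate is that the Hessian of $F_u^2$ in these variables equals the Gram matrix of $\{V_i,\mu_i\partial_{\mu_i}V_i,\mu_i\partial_{\xi_i}V_i\}$, plus errors of size $\bigO(\varepsilon)+\bigO(\sum\varepsilon_{ij})$. For $\mu$ small and $\delta$ small this Gram matrix is close to a block-diagonal matrix of copies of the Euclidean Gram matrix of $\{B_0,\,x\cdot\nabla B_0+\tfrac{n-2k}{2}B_0,\,\partial_{x_l}B_0\}$. That matrix is positive definite, since these functions are linearly independent in $\dot H^k(\R^n)$, and they span the kernel of the linearized operator by non-degeneracy (Wei--Xu \cite{WeiXu}).

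To justify the closeness I use the computations of Section \ref{sec:Interaction}. Cross terms between different bubbles are $\bigO(\varepsilon_{ij})$, and the difference between $V$ and the flat bubble is controlled by \eqref{error1} and \eqref{int.2}. Therefore $F_u^2$ is strictly convex in the rescaled chart on a fixed-size neighbourhood of the configuration, and the minimizer in that neighbourhood is unique.

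\textbf{Step 3: globalization.} Two minimizers each lie within $\bigO(\varepsilon)$ in $H^k$ of $u$, hence within $2\varepsilon$ of each other. By the uniqueness of the decomposition, after a permutation in $\sigma_d$ their parameters are $\smallo(1)$-close as $\varepsilon\to0$ in exactly the rescaled sense above. An argument by contradiction along sequences $\varepsilon_\alpha\to0$ then places both minimizers in the convexity neighbourhood of Step 2, so they coincide modulo $\sigma_d$, and $\mathfrak{s}$ is well defined.

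\textbf{Main obstacle.} The hardest step is the uniform non-degeneracy in Step 2. The quantitative control of the Gram-matrix error by $\sum\varepsilon_{ij}$ and by the manifold and Green's-function corrections has to hold uniformly in the configuration. I would first try to reduce it to the Euclidean computation by the same core/neck/exterior splitting used in Lemma \ref{p,q int lemma} and \eqref{int.3}.
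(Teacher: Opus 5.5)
Your proposal is correct and follows essentially the argument the paper invokes without details, namely the parametrization of Proposition 7 in \cite{BahriCoron}: the uniqueness of the profile decomposition places any two minimizers close together, and non-degeneracy of the orthogonality conditions at a minimizer then gives local uniqueness (your strict-convexity Step 2 is just the Hessian form of that implicit-function step). Two small points: positive definiteness of the limiting Gram matrix only requires linear independence of $B_0$, $x\cdot\nabla B_0+\frac{n-2k}{2}B_0$ and $\partial_{x_l}B_0$, so the appeal to non-degeneracy of the linearized operator is unnecessary (and $B_0$ itself is not in that kernel); and before invoking the uniqueness statement in Steps 1 and 3, you should rule out, by a weak-limit argument, competitors in $\mathscr{R}(d,\widehat{\varepsilon}_{*})$ whose scales or mutual interactions stay bounded away from zero, because that statement is formulated only for configurations in $\mathscr{V}(d,\varepsilon_\alpha)$ with $\varepsilon_\alpha\to0$.
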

\medskip

We next define the energy sublevel sets for $d\in\N\cup\{0\}$
\begin{align}\label{sublevel} 
\mathscr{W}^{d}:=\Big\{u \in \mathcal{M}_{k}^{+} : \mathcal{J}_{g,k}(u)\leq (d+1)^{2k/n}Y_{k}(\S^{n})\Big\}. 
\end{align}
By adapting the classical deformation lemma to account for the bubbling phenomena, as in Proposition 6 in \cite{BahriCoron} and Section 9 in \cite{BahriBrezis}, one obtains the following deformation lemma. We remark that the positivity condition \eqref{positivity} ensures that $\big\{u\in H^{k}(M): u\ge0 \text{ a.e.}\big\}$ is invariant under the flow associated to $-\nabla \mathcal{J}_{g,k}$, for example, via the Brezis-Martin type condition: Brezis \cite{BrezisFlow} (for finite-dimensional spaces), Martin \cite{Martin}, Theorem 5.2 in Deimling \cite{Deimling} and Theorem 6.3 in Chang \cite{KC Chang}. 

\begin{proposition}[A Deformation lemma]\label{deform lemma}
Let $k\ge1$ be an integer, $\(M,g\)$ be a smooth, closed Riemannian manifold of dimension $2k+1\le n\le 2k+3$ or $(M,g) $ be locally conformally flat. Assume that the GJMS operator $P_{g}$ satisfies \eqref{positivity}. 

Suppose the functional $\mathcal{J}_{g,k}$ has no critical points in $\mathcal{M}_{k}^{+}$. Then for all $d\ge1$, up to taking $\varepsilon>0$ small, $(\mathscr{W}^{d},\mathscr{W}^{d-1})$ deformation retracts onto $(\mathscr{W}^{d-1}\cup \mathscr{O}_{d,\varepsilon}\,, \mathscr{W}^{d-1})$, where $\mathscr{V}(d,\widetilde{\varepsilon}\,)\subset\mathscr{O}_{d, \varepsilon}\subset\mathscr{V}(d,\varepsilon)$ with $\widetilde{\varepsilon}>0$ sufficiently small. 
\end{proposition}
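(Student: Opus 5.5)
The proof is a classical pseudo-gradient deformation argument adapted to the bubbling regime, following Proposition 6 in \cite{BahriCoron}. The guiding principle is that, in the absence of critical points, compactness fails only near the critical points at infinity described by Proposition \ref{bubble decomp}.

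\textbf{Step 1: a gradient lower bound away from the bubble neighbourhood.} The set $\mathscr{W}^{d}\setminus\mathscr{W}^{d-1}$ consists of functions with energy in the band $\big(d^{2k/n}Y_{k}(\S^{n}),(d+1)^{2k/n}Y_{k}(\S^{n})\big]$. We claim that for every $\varepsilon>0$ there is $\eta(\varepsilon)>0$ such that
\[
\|\nabla_{\mathcal{M}_{k}^{+}}\mathcal{J}_{g,k}(u)\|\ge\eta \quad\text{for all } u\in(\mathscr{W}^{d}\setminus\mathscr{W}^{d-1})\setminus\mathscr{V}(d,\widetilde{\varepsilon}).
\]
Suppose not, and take a Palais--Smale sequence in this region. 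By Proposition \ref{bubble decomp} and the assumption that there are no critical points, the sequence lies in $\mathscr{V}(d',\varepsilon_{\alpha})$ with $\varepsilon_{\alpha}\to0$, and its energies converge to $d'^{\,2k/n}Y_{k}(\S^{n})$. The energy band forces $d'=d$, except that energies near the top level $(d+1)^{2k/n}Y_k(\S^n)$ would force $d'=d+1$. Landing in $\mathscr{V}(d,\varepsilon_\alpha)$ contradicts the choice of region. To exclude the top level, we work with $\mathscr{W}^{d}$ slightly enlarged, or observe that the flow only decreases energy, so the top level is never reached from below.

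\textbf{Step 2: constructing the flow.} Take a locally Lipschitz pseudo-gradient vector field $Z$ for $\mathcal{J}_{g,k}$ on $\mathcal{M}_{k}^{+}$. Multiply it by a cutoff that vanishes on $\mathscr{V}(d,\widetilde{\varepsilon}/2)$ and on $\mathscr{W}^{d-1}$, and equals $1$ outside $\mathscr{V}(d,\widetilde\varepsilon)$. The flow must preserve the cone $\{u\ge0\}$. This follows from the positivity condition \eqref{positivity} via the Brezis--Martin tangency condition quoted before the statement. The point is that $P_g^{-1}$ is positivity preserving, so a pseudo-gradient of the form $u-cP_g^{-1}(u^{2^*_k-1})$ (suitably normalized) points into the cone.

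\textbf{Step 3: the retraction.} Along the flow, the energy decreases at rate at least $\eta^{2}$ wherever the cutoff equals $1$. Hence every orbit starting in $\mathscr{W}^{d}$ reaches one of two sets in a uniformly bounded time: either $\mathscr{W}^{d-1}$, or the region where the cutoff is less than $1$, which lies inside $\mathscr{V}(d,\widetilde{\varepsilon})$. Define $\mathscr{O}_{d,\varepsilon}$ as the union of the flow-invariant set generated inside $\mathscr{V}(d,\widetilde\varepsilon)$ and the points trapped there. It contains $\mathscr{V}(d,\widetilde\varepsilon)$, and it is contained in $\mathscr{V}(d,\varepsilon)$ provided the flow lines entering $\mathscr{V}(d,\widetilde{\varepsilon})$ cannot escape $\mathscr{V}(d,\varepsilon)$.

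This last containment is the main obstacle. We must show that a flow line cannot travel from $\partial\mathscr{V}(d,\widetilde{\varepsilon})$ out of $\mathscr{V}(d,\varepsilon)$ without losing a definite amount of energy. The plan is to use the parametrization from Lemma \ref{selection map}, which gives coordinates $(a_i,\xi_i,\mu_i)$ plus an orthogonal remainder. In these coordinates, we choose $\widetilde{\varepsilon}\ll\varepsilon$ and use that the distance travelled is at most (energy drop)$/\eta$. Combined with the energy quantization, this forces exit from $\mathscr{V}(d,\varepsilon)$ to drop the energy below $d^{2k/n}Y_k(\S^n)$, that is, into $\mathscr{W}^{d-1}$.

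Finally, stopping each orbit at its first hitting time of $\mathscr{W}^{d-1}\cup\mathscr{O}_{d,\varepsilon}$ gives the deformation retraction of the pair. The hitting time is continuous because the target set is entered transversally, which we arrange by slightly perturbing the sublevel value.
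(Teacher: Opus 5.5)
Your outline follows the same route as the paper, which gives no argument of its own. The paper defers to Proposition 6 of \cite{BahriCoron} and Section 9 of \cite{BahriBrezis}, and adds exactly your Step 2 remark that \eqref{positivity} and the Brezis--Martin condition keep $\{u\ge0\}$ invariant. Your execution, however, fails at a concrete point in Step 1.

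\textbf{Step 1 is false as stated.} Write $c_j:=j^{2k/n}Y_{k}(\S^{n})$. The band $\mathscr{W}^{d}\setminus\mathscr{W}^{d-1}$ contains Palais--Smale sequences converging to the top level $c_{d+1}$ from below. Take $\mathscr{P}_{2^{*}_{k}}\big(\sum_{i=1}^{d+1}V_{\xi_i,\mu}\big)$ with $\mu\to0$ and $\sum_{i\neq j}\varepsilon_{ij}\to0$. Its gradient tends to zero by \eqref{error1} and the interaction estimates, and it stays away from $\mathscr{V}(d,\widetilde\varepsilon)$. By \eqref{num/dum9}, its energy is below $c_{d+1}$ whenever the interaction term beats the $\bigO(\mu^{n-2k})$ term, for instance when two centres approach slowly enough that $\mu^{n-2k}\ll\varepsilon_{12}\to0$. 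By \eqref{energy_sum3_main}, this holds for every separated configuration once $d+1\ge d_\star$, which is precisely the range of the final contradiction.

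So no uniform $\eta$ exists on your region, and orbits do not reach the target in uniformly bounded time. Neither remedy you offer works. Enlarging $\mathscr{W}^{d}$ puts $c_{d+1}$ in the interior of the band. The monotonicity of $\mathcal{J}_{g,k}$ along the flow says nothing about the size of the gradient near $c_{d+1}$.

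What is true, and is all a hitting-time construction needs, is pointwise. Since there are no critical points, every orbit lies strictly below $c_{d+1}$ after any positive time. On $\{c_d\le\mathcal{J}_{g,k}\le c_{d+1}-\eta'\}\setminus\mathscr{V}(d,\widetilde\varepsilon)$, Proposition \ref{bubble decomp} does give a uniform gradient bound. Combined with continuous dependence on initial data, this makes hitting times finite and locally bounded.

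\textbf{The containment you single out is not closed.} The problem is the order of quantifiers. Your $\eta$ is a bound off $\mathscr{V}(d,\widetilde\varepsilon)$, so it degenerates as $\widetilde\varepsilon\to0$. Writing ``$\widetilde\varepsilon\ll\varepsilon$'' does not by itself make the energy burnt in travelling exceed the excess $\sup_{\mathscr{V}(d,\widetilde\varepsilon)}\mathcal{J}_{g,k}-c_d$. The missing idea is to work on the outer shell $\mathscr{V}(d,\varepsilon)\setminus\mathscr{V}(d,\varepsilon/2)$, within a window $\{c_d\le\mathcal{J}_{g,k}\le c_d+\beta_0\}$:
\begin{itemize}
\item There, Proposition \ref{bubble decomp} gives $\|\nabla\mathcal{J}_{g,k}\|\ge\eta_1(\varepsilon)$, with $\eta_1$ independent of $\widetilde\varepsilon$.
\item Since $\mathscr{V}$ is defined by existence of parameters, the $\varepsilon/2$-neighbourhood of $\mathscr{V}(d,\varepsilon/2)$ among nonnegative functions lies in $\mathscr{V}(d,\varepsilon)$.
\item Hence crossing the shell along a pseudo-gradient line with $\|Z\|\le2\|\nabla\mathcal{J}_{g,k}\|$ costs at least $\eta_1\varepsilon/4$ of energy.
\item Only then choose $\widetilde\varepsilon<\varepsilon/2$ with $\mathcal{J}_{g,k}\le c_d+\eta_1\varepsilon/8$ on $\mathscr{V}(d,\widetilde\varepsilon)$. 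This holds because weakly interacting sums of $d$ concentrated bubbles have energy $c_d+\smallo(1)$.
\end{itemize}
No selection-map coordinates are needed for this.

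\textbf{Continuity of the hitting time is unjustified.} Transversality on $\{\mathcal{J}_{g,k}=c_d\}$ is automatic wherever the field is nonzero, so no perturbation of the level is needed. Perturbing it would in fact change $\mathscr{W}^{d-1}$, which is fixed in the statement and reused in the induction. Transversality is not available on $\partial\mathscr{O}_{d,\varepsilon}$, nor in the zone where your cutoff vanishes. A Lipschitz cutoff also cannot vanish on $\mathscr{W}^{d-1}$ and equal $1$ just above it. The simplest remedy is:
\begin{itemize}
\item stop orbits only on $\{\mathcal{J}_{g,k}\le c_d\}$;
\item take $\mathscr{O}_{d,\varepsilon}$ to be the forward-invariant hull of $\mathscr{V}(d,\widetilde\varepsilon)$, which lies in $\mathscr{V}(d,\varepsilon)$ by the shell argument;
\item use a continuous upper bound for the locally bounded hitting times.
\end{itemize}
This yields a homotopy equivalence of pairs, which is all the homology argument in Proposition \ref{topo prop} requires.
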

\bigskip

For a topological space $X$, we let $\mathscr{H}_{*}(X)$ denote the singular homology and $\mathscr{H}^{*}(X)$ denote the cohomology of $X$ with $\Z_{2}$ coefficients. Similarly, for given topological spaces $X$ and $Y,$ $\mathscr{H}_{*}(X,Y)$ will denote the relative homology.  For a map $f:X\to Y$, where $X$ and $Y$ are topological spaces, we denote by $f_{*}$ the induced map in homology and by $f^{*}$  the induced map in cohomology. Note that $M$ is $\Z_{2}$ orientable, that is, it admits non-vanishing top-dimensional homology, and $\mathscr{H}_{n}(M)\cong \Z_{2}$ if $M$ is connected. In other words, $M$ has a nontrivial fundamental class or the orientation class $[M]_{*}\in \mathscr{H}_{n}(M)$.  Poincar\'e duality relates the homology and the cohomology of $M$ via the cap product with the fundamental class $\beta\in \mathscr{H}^{k}(M)\longmapsto\beta \frown [M]_{*} \in \mathscr{H}_{n-k}(M)$ and this gives  $\mathscr{H}^{\ell}(M) \cong \mathscr{H}_{n-\ell}(M)$. 
\medskip

\noindent
In what follows, we will let $d$ be a positive integer. We denote the standard $(d-1)$ dimensional simplex with $d$-vertices as 
\begin{align}\label{simplex}
\Delta_{d-1}:=\Big\{(a_{1}, \ldots,a_{d})\in \R^{d}: a_{i}\ge0\, \text{ for } 1\le i\le d\, \text{ with } \sum \limits_{i=1}^{d} a_{i}=1 \Big\}.
\end{align}
Denoting the Dirac delta at $\xi \in M$ by $\delta_{\xi}$, we define the set of formal barycenters of $M$ of order $d\in N$ as follows:
\begin{align}\label{barycenter}
\mathscr{B}_{d}(M):=\Big\{\sum \limits_{i=1}^{d} a_{i}\delta_{\xi_{i}}: \xi_{i}\in M,\, a_{i}\ge0\, \text{ for } 1\le i\le d\, \text{ with } \sum \limits_{i=1}^{d} a_{i}=1 \Big\},
\end{align}
and set $\mathscr{B}_{0}(M)=\emptyset$. $\mathscr{B}_{d}(M)$ is given the weak topology of measures, and can be identified with the quotient space  $M^{d} \underset{\sigma_{d}}\times \Delta_{d-1}$.  We refer to Kallel and Karoui \cite{Kallel et el} for a detailed study of the homotopy type of the space of barycenters.  
\medskip

\noindent
There is a natural way to transfer the cohomology orientation class $[M]^{*}\neq0\in\mathscr{H}^{n}(M)$ as a nontrivial element of $\mathscr{H}^{n}(M^{d}/\sigma_{d})$. We give a quick summary of the construction. Let $\sigma_{1}\times\sigma_{d-1}$ be the subgroup of $\sigma_{d}$ consisting of elements such that $1\mapsto1$.
Then we have the following equivalence 
\begin{align*}
M^{d}/\sigma_{d}\cong M^{d}/(\sigma_{1}\times\sigma_{d-1})\Big\slash(\sigma_{d}/\sigma_{1}\times\sigma_{d-1})
\end{align*}
and the associated projection $\mathfrak{q}: M^{d}/(\sigma_{1}\times\sigma_{d-1})\to M^{d}/\sigma_{d}$. 
\begin{eqnarray*}
\begin{tikzcd}[column sep=large, row sep=large]
M^{d}/(\sigma_{1}\times\sigma_{d-1}) \arrow[r, "\pi"] \arrow[d, "\mathfrak{q}"'] & M \\
M^{d}/\sigma_{d} &
\end{tikzcd}
\end{eqnarray*}
We then use the transfer maps (see \cite{BahriCoron, BahriBrezis, GamYac, MayNdi1}), which is the transformed function of $\mathfrak{q}$.
\begin{align}\label{transfer}
tr: \mathscr{H}^{*}(M^{d}/(\sigma_{1}\times\sigma_{d-1}))\to \mathscr{H}^{*}(M^{d}/\sigma_{d}).
\end{align}
Then $ \mho^{*}_{d}:=tr\,\circ \pi^{*}([M]^{*})$ is nontrivial element of $\mathscr{H}^{n}(M^{d}/\sigma_{d})$.
\medskip

\noindent
On the other hand, it is well known that for all $d\in\N$, there exists a nontrivial orientation classes 
\begin{align}\label{Oclass}
\Omega_{d}\in\mathscr{H}_{d\,n+d-1}\(\mathscr{B}_{d}(M), \mathscr{B}_{d-1}(M)\)\hbox{ for all }d\ge1.
\end{align}
The cap product and the boundary operator act as follows
\begin{align*}
&\mathscr{H}^{\ell}(M^{d}/\sigma_{d})\times\mathscr{H}_{i}\(\mathscr{B}_{d}(M), \mathscr{B}_{d-1}(M)\)\overset{\frown}\longrightarrow\mathscr{H}_{i-\ell}\(\mathscr{B}_{d}(M), \mathscr{B}_{d-1}(M)\)\\
&~\overset{\partial}\longrightarrow\mathscr{H}_{i-\ell-1}\(\mathscr{B}_{d}(M), \mathscr{B}_{d-1}(M)\).
\end{align*}
Furthermore, the classes  $\Omega_{d}$'s are related via the relation 
\begin{align}\label{transfer relation}
\Omega_{d-1}=\partial\(\mho^{*}_{d}\frown\Omega_{d}\).
\end{align}
See \cite{BahriCoron, BahriBrezis, GamYac, MayNdi1}.
\medskip

\noindent
We define $\mathscr{F}_{d}(\mu)=\mathscr{P}_{2^{*}_{k}}\circ\widetilde{\mathscr{F}}_{d}(\mu)$ where $\mathscr{P}_{2^{*}_{k}}:u \to u/\|u\|_{L^{2^{*}_{k}}}$, $u\not\equiv0$, and $\widetilde{\mathscr{F}}_{d}(\mu): \mathscr{B}_{d}(M)\to\{u\in H^{k}(M): u\geq 0 \text{ a.e.}\}$ is given by
\begin{align*}
\widetilde{\mathscr{F}}_{d}(\mu):\sum \limits_{i=1}^{d} a_{i}\delta_{\xi_{i}}\longmapsto \sum \limits_{i=1}^{d}a_{i}V_{\xi_{i},\mu}.
\end{align*}
Here $V_{\xi_{i},\mu}$ is defined in \eqref{bubble3b} and $\mu, \delta>0$ are chosen sufficiently small as in Corollary \ref{energy loss}. We have then $\mathscr{F}_{d}(\mu): \mathscr{B}_{d}(M)\to \mathscr{W}^{d}$ for all $d\ge1$.  For our topological arguments, we will view $\mathscr{F}_{d}(\mu)$ as a pair map, that is
$$\mathscr{F}_{d}(\mu): \(\mathscr{B}_{d}(M), \mathscr{B}_{d-1}(M)\) \longrightarrow  (\mathscr{W}^{d}, \mathscr{W}^{d-1}).$$
\medskip

We will show that if the functional $\mathcal{J}_{g,k}$ has no critical points in $\mathcal{M}_{k}^{+}$, then, for $\mu$ small enough, the map $\mathscr{F}_{1}(\mu)$ maps the background manifold $M$ to $(\mathscr{W}^{1}, \mathscr{W}^{0})$ in a non-trivial way. We then show that $\mathscr{F}_{d}(\mu)_{*}\not\equiv 0$, for all $ d \in \mathbb{N}$ by using relation \eqref{transfer relation} and induction. On the other hand, from the interaction estimates in Proposition \ref{main:energy esti} it follows that for $\mu$ small enough the map $\mathscr{F}_{d}(\mu)$ is homologically trivial, which is a contradiction. 

\begin{proposition}\label{topo prop}
Let $k\ge1$ be an integer, and let $\(M,g\)$ be a smooth, closed Riemannian manifold of dimension $2k+1\le n\le 2k+3$ or $(M,g) $ be locally conformally flat. Assume that the GJMS operator $P_{g}$ satisfies \eqref{positivity} and suppose that the functional $\mathcal{J}_{g,k}$ has no critical points in $\mathcal{M}_{k}^{+}$. Choosing $0<\mu\le\mu_{\star}$ given by Corollary \ref{energy loss}, the mapping
\begin{align*}
\mathscr{F}_{d}(\mu): \(\mathscr{B}_{d}(M), \mathscr{B}_{d-1}(M)\) \longrightarrow (\mathscr{W}^{d}, \mathscr{W}^{d-1})
\end{align*}
is well defined for all  $d\in\N$. Furthermore,  $\mathscr{F}_{d}(\mu)_{*}(\Omega_{d})\neq 0$ in $\mathscr{H}_{d\,n+d-1}(\mathscr{W}^{d},\mathscr{W}^{d-1})$  for all $d\geq 1$. 
Here $\Omega_{d}\in\mathscr{H}_{d\,n+d-1}\(\mathscr{B}_{d}(M), \mathscr{B}_{d-1}(M)\)$ given by \eqref{Oclass} is non-trivial. 
\end{proposition}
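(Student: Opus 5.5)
Well-definedness comes first. By Corollary \ref{energy loss}, for $0<\mu\le\mu_\star$ and $\delta\le\delta_\star$, every $\sum_{i=1}^d a_iV_{\xi_i,\mu}$ with $a_i\ge0$ not all zero has energy at most $(d+1/2)^{2k/n}Y_k(\S^n)<(d+1)^{2k/n}Y_k(\S^n)$. Hence $\mathscr{F}_d(\mu)(\mathscr{B}_d(M))\subset\mathscr{W}^d$. A barycenter in $\mathscr{B}_{d-1}(M)$ has at most $d-1$ nonzero weights, so its image lies in $\mathscr{W}^{d-1}$ by the same bound. Since $\mathcal{J}_{g,k}$ is $0$-homogeneous, the normalization $\mathscr{P}_{2^*_k}$ does not change energies. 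Continuity of $\mathscr{F}_d(\mu)$ for the weak topology of measures follows from the smooth dependence of $V_{\xi,\mu}$ on $\xi$ (through $\Lambda$ and $G_g$) and from the fact that zero weights make points disappear. So the pair map is well defined.

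Nontriviality is proved by induction on $d$. For $d=1$, $(\mathscr{B}_1(M),\mathscr{B}_0(M))=(M,\emptyset)$ and $\Omega_1=[M]_*$. We use the deformation lemma (Proposition \ref{deform lemma}) with $d=1$: $(\mathscr{W}^1,\mathscr{W}^0)$ retracts onto $(\mathscr{W}^0\cup\mathscr{O}_{1,\varepsilon},\mathscr{W}^0)$. Since there are no critical points and, by Proposition \ref{bubble decomp}, the only critical levels at infinity below $2^{2k/n}Y_k(\S^n)$ are $Y_k(\S^n)$, we get $\mathscr{W}^0$ in place of a sublevel strictly below $Y_k(\S^n)$, which is empty. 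More precisely, sublevels below $Y_k(\S^n)$ contain no Palais--Smale sequences, so they retract to the empty set, as $Y_k(M,g)\le Y_k(\S^n)$. Thus $\mathscr{H}_*(\mathscr{W}^1,\mathscr{W}^0)$ receives the image of $\mathscr{H}_*(\mathscr{O}_{1,\varepsilon})$. The selection map $\mathfrak{s}:\mathscr{V}(1,\varepsilon)\to M$ of Lemma \ref{selection map} satisfies $\mathfrak{s}\circ\mathscr{F}_1(\mu)\simeq \mathrm{id}_M$, since $\mathfrak{s}(V_{\xi,\mu}/\|V_{\xi,\mu}\|)=\xi$ by uniqueness of the decomposition. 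Composing the retraction with $\mathfrak{s}$ gives a left homotopy inverse, so $\mathscr{F}_1(\mu)_*[M]_*\neq0$.

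For the inductive step, assume $\mathscr{F}_{d-1}(\mu)_*(\Omega_{d-1})\ne0$ and suppose by contradiction that $\mathscr{F}_d(\mu)_*(\Omega_d)=0$. The deformation lemma identifies $(\mathscr{W}^d,\mathscr{W}^{d-1})$ with $(\mathscr{W}^{d-1}\cup\mathscr{O}_{d,\varepsilon},\mathscr{W}^{d-1})$. By excision this is $(\mathscr{O}_{d,\varepsilon},\mathscr{O}_{d,\varepsilon}\cap\mathscr{W}^{d-1})$. The selection map sends $\mathscr{O}_{d,\varepsilon}$ to $M^d/\sigma_d$, so we can pull back $\mho_d^*$ to a class $\mathfrak{s}^*\mho^*_d$ on $\mathscr{O}_{d,\varepsilon}$. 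Since $\mathfrak{s}\circ\mathscr{F}_d(\mu)$ is homotopic to the natural projection $\mathscr{B}_d\setminus\mathscr{B}_{d-1}\to M^d/\sigma_d$ near the top stratum, naturality of cap product and boundary gives
\[
\mathscr{F}_{d-1}(\mu)_*(\Omega_{d-1})=\mathscr{F}_{d-1}(\mu)_*\partial(\mho_d^*\frown\Omega_d)=\partial\bigl(\mathfrak{s}^*\mho^*_d\frown\mathscr{F}_d(\mu)_*\Omega_d\bigr)=0 .
\]
Here \eqref{transfer relation} is used, with the boundary taken in the triple $(\mathscr{W}^{d},\mathscr{W}^{d-1},\mathscr{W}^{d-2})$. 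This contradicts the inductive hypothesis.

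I expect the main obstacle to be making this naturality precise. The cap product needs the cohomology class on a neighborhood of the image of the pair, yet $\mathfrak{s}$ is only defined on $\mathscr{V}(d,\varepsilon)$ and not on all of $\mathscr{W}^{d-1}$. I would first handle this as Bahri--Coron do, replacing $\mathscr{B}_d$ by the pair $(\mathscr{B}_d,\mathscr{B}_d\setminus U)$ with $U$ a neighborhood of the top stratum on which $\mathscr{F}_d(\mu)$ lands in $\mathscr{O}_{d,\varepsilon}$. This uses that equal weights and small interactions put the image in $\mathscr{V}(d,\varepsilon)$, which requires $\mu$ small as provided. Then excision and the deformation lemma transport the class correctly.
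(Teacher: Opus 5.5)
Your proposal is correct and follows the paper's proof essentially step for step: well-definedness from Corollary \ref{energy loss}, the base case via $\mathscr{W}^{0}=\emptyset$ together with the deformation lemma and $\mathfrak{s}_{1}\circ\mathscr{F}_{1}(\mu)=\mathrm{Id}_{M}$, and the inductive step via $\mho^{*}_{d}=\mathscr{F}_{d}(\mu)^{*}(\mathfrak{s}_{d}^{*}(\mho^{*}_{d}))$, \eqref{transfer relation} and naturality of the cap product and connecting homomorphism. The paper also leaves implicit the localization you flag as the main obstacle; what makes your pair $(\mathscr{B}_{d}(M),\mathscr{B}_{d}(M)\setminus U)$ map into $(\mathscr{W}^{d},\mathscr{W}^{d-1})$ is exactly \eqref{energy_sum1}--\eqref{energy_sum2} of Proposition \ref{main:energy esti}, not Corollary \ref{energy loss} alone, since those estimates show that large interactions or unequal weights push the energy below $d^{2k/n}Y_{k}(\S^{n})$.
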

\begin{proof}
Definitions and Proposition \ref{main:energy esti} give us the following commutative diagram
\begin{eqnarray}
\begin{tikzcd}[column sep=large, row sep=large]
\mathscr{H}_{*}\(\mathscr{B}_{d}(M),\, \mathscr{B}_{d-1}(M)\) \arrow[r, "\mathscr{F}_{d}(\mu)_{*}"] \arrow[d, "\partial"'] &\mathscr{H}_{*}\(\mathscr{W}^{d},\, \mathscr{W}^{d-1}\) \arrow[d, "\partial"] \\
\mathscr{H}_{*\,-1}\(\mathscr{B}_{d-1}(M),\, \mathscr{B}_{d-2}(M)\) \arrow[r, "\mathscr{F}_{d-1}(\mu)_{*}"] & \mathscr{H}_{*\,-1}\(\mathscr{W}^{d-1},\, \mathscr{W}^{d-2}\).
\end{tikzcd}
\end{eqnarray}
Here $\partial$  denotes the connecting homomorphisms. 
\smallskip

\noindent
We have, as in \cite{BahriCoron, BahriBrezis, MayNdi1}, that $\mho^{*}_{d}=\mathscr{F}_{d}(\mu)^{*}( \mathfrak{s}_{d}^{*}(\mho^{*}_{d}))$. Recalling the relation $\Omega_{d-1}=\partial\(\mho^{*}_{d}\frown\Omega_{d}\)$, we obtain from the above commutative diagram. 
\begin{align}
&\mathscr{F}_{d-1}(\mu)_{*}(\Omega_{d-1})=\mathscr{F}_{d-1}(\mu)_{*}\(\partial\(\mho^{*}_{d}\frown\Omega_{d}\)\)\notag\\
&=\partial\Big(\mathscr{F}_{d}(\mu)_{*}\big(~\mathscr{F}_{d}(\mu)^{*}( \mathfrak{s}_{d}^{*}(\mho^{*}_{d}))\frown\Omega_{d}\big)\Big)=\partial\,\big(\mathfrak{s}_{d}^{*}(\mho^{*}_{d})\frown\mathscr{F}_{d}(\mu)_{*}(\Omega_{d})\big).
\end{align}
So $\mathscr{F}_{d-1}(\mu)_{*}(\Omega_{d-1})\neq0$ in $\mathscr{H}_{(d-1)(n+1)-1}(\mathscr{W}^{d-1},\mathscr{W}^{d-2})$ for $d\ge2$  implies that $\mathscr{F}_{d}(\mu)_{*}(\Omega_{d})\neq0$ in $\mathscr{H}_{d(n+1)-1}(\mathscr{W}^{d-1}\cup \mathscr{O}_{d,\varepsilon},\mathscr{W}^{d-1})\simeq\mathscr{H}_{d(n+1)-1}(\mathscr{W}^{d},\mathscr{W}^{d-1})$ for $\varepsilon$ small by the deformation lemma \ref{deform lemma}. Therefore $\mathscr{F}_{d-1}(\mu)_{*}(\Omega_{d-1})\neq0$ implies that $\mathscr{F}_{d}(\mu)_{*}(\Omega_{d})\neq0$ for $d\ge2$. 
\smallskip

\noindent
Next we show that the map $\mathscr{F}_{1}(\mu): \(\mathscr{B}_{1}(M), \mathscr{B}_{0}(M)\) \longrightarrow $ $(\mathscr{W}^{1}, \mathscr{W}^{0})$ satisfies $\mathscr{F}_{1}(\mu)_{*}(\Omega_{1})\neq 0$ in $\mathscr{H}_{n}(\mathscr{W}^{1}, \mathscr{W}^{0})$ assuming that $\mathcal{J}_{g,k}$ has no critical points in $\mathcal{M}_{k}^{+}$. This will complete the proof of Proposition \ref{topo prop} by induction.
\smallskip

\noindent
Note, $\mathscr{W}^{0}=\emptyset$ since $\inf\limits_{\mathcal{M}_{k}^{+}}\mathcal{J}_{g}(u)=Y_{k}(\S^{n})$ and the infimum is not achieved,  in our case. From Proposition \ref{deform lemma} we obtain that for some $\delta$ small, the sublevel $\mathscr{W}^{1}$ deformation retracts to $\mathscr{O}_{1,\delta}$, with $\mathscr{V}(1,\tilde{\delta}\,)\subset\mathscr{O}_{1, \delta}\subset\mathscr{V}(1,\delta)$ for some $\tilde{\delta}$ small. 
For $\mu$ and $\hat{\varepsilon}$ sufficiently small we have  $\mathscr{F}_{1}(\mu): M\to \mathscr{V}(1,\hat{\varepsilon})$, and is such that $\mathfrak{s}\circ\mathscr{F}_{1}(\mu)=\mathds{1}_{M}$. Here $\mathfrak{s}$ is the selection map from Proposition \ref{selection map}. So then 
\begin{eqnarray}
\begin{tikzcd}[column sep=large, row sep=large]
\arrow[d,"\frak{i}"']\mathscr{V}(1,\hat{\varepsilon})&\arrow[l,"\mathscr{F}_{1}(\mu)"']M\arrow[d,"\text{Id}_{M}"]\arrow[r,"\mathscr{F}_{1}(\mu)"]&\mathscr{W}^{1}\arrow[r,"\frak{r}"]&\mathscr{O}_{1, \delta}\arrow[ld,"\frak{i}"]\\
\mathscr{V}(1,\delta)\arrow[r,"\frak{s}_{1}"]&M&\arrow[l,"\frak{s}_{1}"']\mathscr{V}(1,\delta)
\end{tikzcd}
\end{eqnarray}
Here $\frak{i}$ is the inclusion map and $\frak{r}$ is the retraction given by Proposition \ref{deform lemma}. This tells us $\frak{s}_{1}\circ \frak{i}\circ \frak{r}\circ \mathscr{F}_{1}(\mu)=\text{Id}_{M}$. Passing to homologies we get that $(\frak{s}_{1})_{*}\circ \frak{i}_{*}\circ\frak{r}_{*}\circ\mathscr{F}_{1}(\mu)_{*}(\Omega_{1})\neq 0$ in $\mathscr{H}_{n}(\mathscr{W}^{1})=\mathscr{H}_{n}(\mathscr{W}^{1},\mathscr{W}^{0})$.
\noindent
Note that this proposition implies for all $d\in\N$, up to taking $\varepsilon$ and $\mu$  small, the map  $\mathscr{F}_{d}(\mu): \(\mathscr{B}_{d}(M), \mathscr{B}_{d-1}(M)\) \longrightarrow  (\mathscr{W}^{d}, \mathscr{W}^{d-1})$ is not homotopic to a constant map. 
\end{proof}
\medskip

\noindent
{\bf{Proof of Theorem \ref{thm:main}}}: We proceed by contradiction and, therefore, assume that eq. \eqref{eq:one} has no positive smooth solutions. So the functional $\mathcal{J}_{g,k}$ has no critical points in $\mathcal{M}_{k}^{+}$ by Proposition \ref{crit_pts}. 

Corollary \ref{energy loss}  gives that there exists $d_{\star}\in\N$ large such that $\mathscr{F}_{d}(\mu)\(\mathscr{B}_{d}(M)\)\subset\mathscr{W}^{d-1}$ for all $d\ge d_{\star}$, implying $\mathscr{F}_{d}(\mu)$ is homotopic to a constant map in $(\mathscr{W}^{d}, \mathscr{W}^{d-1})$. However, this contradicts the non-triviality in Proposition \ref{topo prop} for $d\ge d_{*}$, thereby completing the proof. \qed
\medskip

\appendix
\section{An expansion of the GJMS operator}\label{A2}

We provide the expansion of $P_{g}$ in conformal normal coordinates obtained in \cite{MazPrem} (Proposition B.2), following the earlier work \cite{MazVetois}. This is based on Juhl’s formulae for GJMS operators \cite{JuhlGJMS}. 

We use the notations from Section \ref{sec1} and recall that $g_{\xi} := \Lambda_\xi^{\frac{4}{n-2k}} g$ is the conformal metric satisfying \eqref{conf.1}, \eqref{conf.2} and  \eqref{conf.3}. For a smooth radial function  $v(x)=v(r)$ we have  as $r=|x|\to0$
\begin{align*}
&P_{(\exp^{g_{\xi}}_\xi)^{*}g_{\xi}}\,v (x)=\,\Delta_{0}^{k}\,v(x)+ \big(C_{1,2k-2}\Scal_{g_{\xi};ab}(\xi)+\psi^{(3)}_{2k-2}\big)\Delta_{0}^{k-1}v(x)-\bigg(C_{2,2k-2}\,\times \notag\\
&\,\sum\limits_{i,j}\Weyl_{g_{\xi},iabj}(\xi)\Weyl_{g_{\xi},icdj}(\xi)\frac{x^{a}x^{b}x^{c}x^{d}}{r^{2}}+\frac{\psi^{(5)}_{2k-2}}{r^{2}}\bigg)\partial_{r}^{2}\Delta_{0}^{k-2}v(x)+\bigg(C_{1,2k-3}\,\times  \notag\\
&\,\sum\limits_{i,j}\Weyl_{g_{\xi},iabj}(\xi)\Weyl_{g_{\xi},icdj}(\xi)\frac{x^{a}x^{b}x^{c}x^{d}}{r^{3}}-C_{2,2k-3}\Scal_{g_{\xi};ab}(\xi)\frac{x^{a}x^{b}}{r}+\frac{\psi^{(5)}_{2k-3}}{r^{3}}\Bigg)\times \notag\\
&\,\partial_{r}\Delta_{0}^{k-2}v(x)-\Bigg( C_{3,2k-3}\sum\limits_{i,j}\Weyl_{g_{\xi},iabj}(\xi)\Weyl_{g_{\xi},icdj}(\xi)\frac{x^{a}x^{b}x^{c}x^{d}}{r^{3}}+C_{4,2k-3}\,\times \notag\\
&\,\Scal_{g_{\xi};ab}(\xi)\frac{x^{a}x^{b}}{r}\Bigg)\partial^{3}_{r}\Delta_{0}^{k-3}v(x)-\Big(C_{1,2k-4}|\Weyl_{g_{\xi}}|^{2}+\psi_{2k-4}^{(1)}\Big)\Delta^{k-2}_{0}v(x)-\Big(C_{2,2k-4}\,\times  \notag\\
&\,|\Weyl_{g_{\xi}}|^{2}+C_{3,2k-4}\Delta_{g_{\xi}}\Ricci_{g_{\xi},ab}(\xi)\frac{x^{a}x^{b}}{r^{2}}+C_{4,2k-4}\Scal_{g_{\xi};ab}(\xi)\frac{x^{a}x^{b}}{r^{2}}-C_{5,2k-4}\,\times  \notag\\
&\,\sum\limits_{i,j}\Weyl_{g_{\xi},abj}(\xi)\Weyl_{g_{\xi},icdj}(\xi)\frac{x^{a}x^{b}x^{c}x^{d}}{r^{4}}+\frac{\psi_{2k-4}^{(3)}}{r^{2}}\Bigg)\partial^{2}_{r}\Delta_{0}^{k-3}v(x)\notag-\bigg(C_{6,2k-4}\Scal_{g_{\xi};ab}(\xi)  \notag\\
&\,\times\frac{x^{a}x^{b}}{r^{2}}+C_{7,2k-4}\sum\limits_{i,j}\Weyl_{g_{\xi},iabj}(\xi)\Weyl_{g_{\xi},icdj}(\xi)\frac{x^{a}x^{b}x^{c}x^{d}}{r^{4}} +\frac{\psi_{2k-4}^{(5)}}{r^{2}}\bigg)\partial^{4}_{r}\Delta_{0}^{k-4}v(x)\,+\notag\\
\end{align*}
\begin{align}\label{expansion.Pg1}
&\,\bigg(C_{1,2k-5}|\Weyl_{g_{\xi}}|^{2}+C_{2,2k-5}\Delta_{g_{\xi}}\Ricci_{g_{\xi},ab}(\xi)\frac{x^{a}x^{b}}{r^{2}}+C_{3,2k-5}\Scal_{g_{\xi};ab}(\xi)\frac{x^{a}x^{b}}{r^{2}}+\frac{\psi_{2k-5}^{(3)}}{r^{2}}\bigg)\notag\\
&\,\times\frac{\partial_{r}\Delta_{0}^{k-3}v(x)}{r}+\bigg(C_{2,2k-5}|\Weyl_{g_{\xi}}|^{2}-C_{3,2k-5}\Delta_{g}\Ricci_{g_{\xi,ab}}(\xi)\frac{x^{a}x^{b}}{r^{2}}+C_{4,2k-5}\Scal_{g_{\xi};ab}(\xi) \notag\\
&\,\times\frac{x^{a}x^{b}}{r^{2}}+C_{5,2k-5}\sum\limits_{i,j}\Weyl_{g_{\xi},iabj}(\xi)\Weyl_{g_{\xi},icdj}(\xi)\frac{x^{a}x^{b}x^{c}x^{d}}{r^{4}}+\frac{\psi_{2k-5}^{(5)}}{r^{2}}\bigg)\frac{\partial^{3}_{r}\Delta_{0}^{k-4}v(x)}{r}\,+\notag\\
&\,\bigg(C_{1,2k-6}|\Weyl_{g_{\xi}}|^{2}+C_{2,2k-6}\Delta_{g}\Ricci_{g_{\xi,ab}}(\xi)\frac{x^{a}x^{b}}{r^{2}}-C_{3,2k-6}\Scal_{g_{\xi};ab}(\xi)\frac{x^{a}x^{b}}{r^{2}}-C_{4,2k-6} \notag\\
&\,\times\sum\limits_{i,j}\Weyl_{g_{\xi},iabj}(\xi)\Weyl_{g_{\xi},icdj}(\xi)\frac{x^{a}x^{b}x^{c}x^{d}}{r^{4}}+\frac{\psi_{2k-6}^{(3)}}{r^{2}}\bigg)\frac{\partial^{2}_{r}\Delta_{0}^{k-4}v(x)}{r^{2}}+\bigg(C_{1,2k-7}\,\times \notag\\
&\,|\Weyl_{g_{\xi}}|^{2}-C_{2,2k-7}\Delta_{g}\Ricci_{g_{\xi,ab}}(\xi)\frac{x^{a}x^{b}}{r^{2}}+C_{3,2k-7}\Scal_{g_{\xi};ab}(\xi)\frac{x^{a}x^{b}}{r^{2}}\;+ \notag\\
&\,C_{4,2k-7}\sum\limits_{i,j}\Weyl_{g_{\xi},iabj}(\xi)\Weyl_{g_{\xi},icdj}(\xi)\frac{x^{a}x^{b}x^{c}x^{d}}{r^{4}}+\frac{\psi_{2k-7}^{(5)}}{r^{2}}\bigg)\frac{\partial_{r}\Delta_{0}^{k-4}v(x)}{r^{3}}\;+\notag\\
&\,\bigO \big( \sum_{j=0}^{3} r^{4-j}|\nabla^{2k-2-j} v(x)|\big) + \bigO \big( \sum_{j=0}^{2k-6}|\nabla^{j} v(x)|\big)\;+ \notag\\
&\,\sum_{p=0}^{2k-6} \sum_{q=0}^{\big[\frac{2k-5}{2}\big]} \frac{\psi_q^{(2k-5-2q)}(x)}{r^{2k-5-2q + p}} \partial_r^{2k-5-p} v(x)+ \bigO \big( r^N |\nabla^{2k-1} v(x)|\big).
\end{align}
The coefficients $C_{i,2k-j}$ are positive are rational functions of $n, k$ and $\psi_\ell^{(i)}$ are homogeneous polynomials of degree $i \ge 1$ in $\R^n$ with coefficients depending only on $g_\xi$. For further details and a proof, see Appendix B in \cite{MazPrem}.
\bigskip

\subsection*{Acknowledgements:} 
S. Mazumdar gratefully acknowledges the support from the MATRICS grant MTR/2022/000447 of the Science and Engineering Research Board (currently ANRF) of India. Part of this work was carried out during S.Mazumdar's visit to Howard University, and S. Mazumdar is grateful for the support and hospitality provided. S. Mazumdar would also like to thank Fr\'ed\'eric Robert and K. Sandeep for introducing him to the work of Bahri and the Bahri–Coron paper several years ago, and for many valuable discussions. 
\smallskip

\noindent
C.B. Ndiaye is partially supported by NSF grant DMS–2000164 and AMS 2025–2026 Claytor-Gilmer Fellowship.

\bigskip


\medskip

\begin{thebibliography}{99}

\bib{AndKoRatWei}{article}{
     title={Quantitative stability of the total $Q$-curvature near minimizing metrics}, 
     author={João Henrique Andrade},
     author={ König, Tobias},
     author={Ratzkin, Jesse},
     author={Wei, Juncheng},
     date={2024},
     note={arXiv:2407.06934},
}

\bib{Aubin}{article}{
   author={Aubin, Thierry},
   title={\'Equations diff\'erentielles non lin\'eaires et probl\`eme de
   Yamabe concernant la courbure scalaire},
   journal={J. Math. Pures Appl. (9)},
   volume={55},
   date={1976},
   number={3},
   pages={269--296},
   issn={0021-7824},
}

\bib{ALL}{article}{
   author={Avalos, Rodrigo},
   author={Laurain, Paul},
   author={Lira, Jorge H.},
   title={A positive energy theorem for fourth-order gravity},
   journal={Calc. Var. Partial Differential Equations},
   volume={61},
   date={2022},
   number={2},
   pages={Paper No. 48, 33},
   issn={0944-2669},
}

\bib{Bahri}{book}{
    author={Bahri, Abbas},
   title={Critical points at infinity in some variational problems},
   series={Pitman Research Notes in Mathematics Series},
   volume={182},
   publisher={Longman Scientific \& Technical, Harlow; copublished in the
   United States with John Wiley \& Sons, Inc., New York},
   date={1989},
   pages={vi+I15+307},
   isbn={0-582-02164-2},
}


\bib{BahriBrezis}{article}{
   author={Bahri, Abbas},
   author={Brezis, Ha\"im},
   title={Non-linear elliptic equations on Riemannian manifolds with the
   Sobolev critical exponent},
   conference={
      title={Topics in geometry},
   },
   book={
      series={Progr. Nonlinear Differential Equations Appl.},
      volume={20},
      publisher={Birkh\"auser Boston, Boston, MA},
   },
   isbn={0-8176-3828-8},
   date={1996},
   pages={1--100},
}

\bib{BahriCoron}{article}{
   author={Bahri, Abbas},
   author={Coron, Jean-Michel},
   title={On a nonlinear elliptic equation involving the critical Sobolev
   exponent: the effect of the topology of the domain},
   journal={Comm. Pure Appl. Math.},
   volume={41},
   date={1988},
   number={3},
   pages={253--294},
   issn={0010-3640},
}

\bib{BFR}{article}{
   author={Baird, Paul},
   author={Fardoun, Ali},
   author={Regbaoui, Rachid},
   title={Prescribed $Q$-curvature on manifolds of even dimension},
   journal={J. Geom. Phys.},
   volume={59},
   date={2009},
   number={2},
   pages={221--233},
   issn={0393-0440},
}


\bib{Branson1}{article}{
   author={Branson, Thomas P.},
   title={Differential operators canonically associated to a conformal
   structure},
   journal={Math. Scand.},
   volume={57},
   date={1985},
   number={2},
   pages={293--345},
   issn={0025-5521},
   review={\MR{0832360}},
}

\bib{Branson2}{article}{
   author={Branson, Thomas P.},
   title={Group representations arising from Lorentz conformal geometry},
   journal={J. Funct. Anal.},
   volume={74},
   date={1987},
   number={2},
   pages={199--291},
   issn={0022-1236},
}

\bib{Branson3}{article}{
   author={Branson, Thomas P.},
   title={Sharp inequalities, the functional determinant, and the
   complementary series},
   journal={Trans. Amer. Math. Soc.},
   volume={347},
   date={1995},
   number={10},
   pages={3671--3742},
   issn={0002-9947},
   review={\MR{1316845}},
}

\bib{BransonOrs}{article}{
   author={Branson, Thomas P.},
   author={\O rsted, Bent},
   title={Explicit functional determinants in four dimensions},
   journal={Proc. Amer. Math. Soc.},
   volume={113},
   date={1991},
   number={3},
   pages={669--682},
   issn={0002-9939},
}

\bib{BreQ}{article}{
   author={Brendle, Simon},
   title={Global existence and convergence for a higher order flow in
   conformal geometry},
   journal={Ann. of Math. (2)},
   volume={158},
   date={2003},
   number={1},
   pages={323--343},
   issn={0003-486X},
}

\bib{Bre1}{article}{
   author={Brendle, Simon},
   title={Convergence of the Yamabe flow for arbitrary initial energy},
   journal={J. Differential Geom.},
   volume={69},
   date={2005},
   number={2},
   pages={217--278},
   issn={0022-040X},
}

\bib{BrezisFlow}{article}{
   author={Brezis, Ha\"im},
   title={On a characterization of flow-invariant sets},
   journal={Comm. Pure Appl. Math.},
   volume={23},
   date={1970},
   pages={261--263},
   issn={0010-3640},
}

\bib{BrezisCoron}{article}{
   author={Brezis, Ha\"im},
   author={Coron, Jean-Michel},
   title={Convergence of solutions of $H$-systems or how to blow bubbles},
   journal={Arch. Rational Mech. Anal.},
   volume={89},
   date={1985},
   number={1},
   pages={21--56},
   issn={0003-9527},
}

\bib{Cao}{article}{
   author={Cao, Jian Guo},
   title={The existence of generalized isothermal coordinates for
   higher-dimensional Riemannian manifolds},
   journal={Trans. Amer. Math. Soc.},
   volume={324},
   date={1991},
   number={2},
   pages={901--920},
   issn={0002-9947},
}

\bib{CarPrem}{article}{
     author={Carletti, Lorenzo},
     author={Premoselli, Bruno},
     title={A priori bounds for energy-bounded solutions of critical polyharmonic equations on open sets}, 
     year={2026},
     note={In preparation},
}

\bib{CaseGov}{article}{
   author={Case, Jeffrey S.},
   author={Gover, A. Rod},
   title={The GJMS operators in geometry, analysis and physics},
   journal={J. Lond. Math. Soc. (2)},
   volume={113},
   date={2026},
   number={1},
   pages={Paper No. e70375, 21},
   issn={0024-6107},
}

\bib{CaseMal}{article}{
   author={Case, Jeffrey S.},
   author={Malchiodi, Andrea},
   title={A factorization of the GJMS operators of special Einstein products
   and applications},
   journal={J. Lond. Math. Soc. (2)},
   volume={110},
   date={2024},
   number={5},
   pages={Paper No. e70023, 17},
   issn={0024-6107},
}

\bib{KC Chang}{book}{
   author={Chang, Kung-ching},
   title={Infinite-dimensional Morse theory and multiple solution problems},
   series={Progress in Nonlinear Differential Equations and their
   Applications},
   volume={6},
   publisher={Birkh\"auser Boston, Inc., Boston, MA},
   date={1993},
   pages={x+312},
   isbn={0-8176-3451-7},
}

\bib{ChenHou}{article}{
   author={Chen, Xuezhang},
   author={Hou, Fei},
   title={Remarks on GJMS operator of order six},
   journal={Pacific J. Math.},
   volume={289},
   date={2017},
   number={1},
   pages={35--70},
}

\bib{Deimling}{book}{
   author={Deimling, Klaus},
   title={Ordinary differential equations in Banach spaces},
   series={Lecture Notes in Mathematics},
   volume={Vol. 596},
   publisher={Springer-Verlag, Berlin-New York},
   date={1977},
   pages={vi+137},
   review={\MR{0463601}},
}

\bib{DjaMal}{article}{
   author={Djadli, Zindine},
   author={Malchiodi, Andrea},
   title={Existence of conformal metrics with constant $Q$-curvature},
   journal={Ann. of Math. (2)},
   volume={168},
   date={2008},
   number={3},
   pages={813--858},
   issn={0003-486X},
}

\bib{Dold}{book}{
   author={Dold, Albrecht},
   title={Lectures on algebraic topology},
   series={Classics in Mathematics},
   note={Reprint of the 1972 edition},
   publisher={Springer-Verlag, Berlin},
   date={1995},
   pages={xii+377},
   isbn={3-540-58660-1},
}

\bib{FefGra1}{article}{
   author={Fefferman, Charles},
   author={Graham, C. Robin},
   title={Conformal invariants},
   note={The mathematical heritage of \'Elie Cartan (Lyon, 1984)},
   journal={Ast\'erisque},
   date={1985},
   pages={95--116},
   issn={0303-1179},
}

\bib{FefGra2}{book}{
   author={Fefferman, Charles},
   author={Graham, C. Robin},
   title={The ambient metric},
   series={Annals of Mathematics Studies},
   volume={178},
   publisher={Princeton University Press, Princeton, NJ},
   date={2012},
   pages={x+113},
   isbn={978-0-691-15313-1},
}

\bib{FefGra3}{article}{
   author={Fefferman, Charles},
   author={Graham, C. Robin},
   title={Juhl's formulae for GJMS operators and $Q$-curvatures},
   journal={J. Amer. Math. Soc.},
   volume={26},
   date={2013},
   number={4},
   pages={1191--1207},
   issn={0894-0347},
}

\bib{Gamara}{article}{
   author={Gamara, Najoua},
   title={The CR Yamabe conjecture---the case $n=1$},
   journal={J. Eur. Math. Soc. (JEMS)},
   volume={3},
   date={2001},
   number={2},
   pages={105--137},
   issn={1435-9855},
}

\bib{GamYac}{article}{
   author={Gamara, Najoua},
   author={Yacoub, Ridha},
   title={CR Yamabe conjecture---the conformally flat case},
   journal={Pacific J. Math.},
   volume={201},
   date={2001},
   number={1},
   pages={121--175},
   issn={0030-8730},
}



\bib{GongKimWei}{article}{
      author={Gong, Liuwei},
      author={Kim, Seunghyeok},
      author={Wei, Juncheng},
      title={Compactness and non-compactness theorems of the fourth- and sixth-order constant $Q$-curvature problems}, 
      date={2025},
      note={arXiv: 2502.14237},
}


\bib{GongLeeWei}{article}{
      title={Global Convergence of the Gursky-Malchiodi $Q$-curvature Flow}, 
      author={Gong, Liuwei},
      author={Lee, Sanghoon},
      author={Wei, Juncheng},
      date={2026},
      note={arXiv: 2602.04267},
}

\bib{Gov}{article}{
   author={Gover, A. Rod},
   title={Laplacian operators and $Q$-curvature on conformally Einstein
   manifolds},
   journal={Math. Ann.},
   volume={336},
   date={2006},
   number={2},
   pages={311--334},
   issn={0025-5831},
}

\bib{GovPete}{article}{
   author={Gover, A. Rod},
   author={Peterson, Lawrence J.},
   title={Conformally invariant powers of the Laplacian, $Q$-curvature, and
   tractor calculus},
   journal={Comm. Math. Phys.},
   volume={235},
   date={2003},
   number={2},
   pages={339--378},
   issn={0010-3616},
}

\bib{GJMS}{article}{
   author={Graham, C. Robin},
   author={Jenne, Ralph},
   author={Mason, Lionel J.},
   author={Sparling, George A. J.},
   title={Conformally invariant powers of the Laplacian. I. Existence},
   journal={J. London Math. Soc. (2)},
   volume={46},
   date={1992},
   number={3},
   pages={557--565},
   issn={0024-6107},
}

\bib{GZ}{article}{
   author={Graham, C. Robin},
   author={Zworski, Maciej},
   title={Scattering matrix in conformal geometry},
   journal={Invent. Math.},
   volume={152},
   date={2003},
   number={1},
   pages={89--118},
   issn={0020-9910},
}

\bib{Gun}{article}{
   author={G\"unther, Matthias},
   title={Conformal normal coordinates},
   journal={Ann. Global Anal. Geom.},
   volume={11},
   date={1993},
   number={2},
   pages={173--184},
   issn={0232-704X},
}

\bib{GurHangLin}{article}{
   author={Gursky, Matthew J.},
   author={Hang, Fengbo},
   author={Lin, Yueh-Ju},
   title={Riemannian manifolds with positive Yamabe invariant and Paneitz
   operator},
   journal={Int. Math. Res. Not. IMRN},
   date={2016},
   number={5},
   pages={1348--1367},
   issn={1073-7928},
}

\bib{GurMal}{article}{
   author={Gursky, Matthew J.},
   author={Malchiodi, Andrea},
   title={A strong maximum principle for the Paneitz operator and a
   non-local flow for the $Q$-curvature},
   journal={J. Eur. Math. Soc. (JEMS)},
   volume={17},
   date={2015},
   number={9},
   pages={2137--2173},
   issn={1435-9855},
}


\bib{HangYang1}{article}{
   author={Hang, Fengbo},
   author={Yang, Paul C.},
   title={Sign of Green's function of Paneitz operators and the $Q$
   curvature},
   journal={Int. Math. Res. Not. IMRN},
   date={2015},
   number={19},
   pages={9775--9791},
   issn={1073-7928},
}

\bib{HangYang2}{article}{
   author={Hang, Fengbo},
   author={Yang, Paul C.},
   title={$Q$-curvature on a class of manifolds with dimension at least 5},
   journal={Comm. Pure Appl. Math.},
   volume={69},
   date={2016},
   number={8},
   pages={1452--1491},
   issn={0010-3640},
}

\bib{Hebey1}{book}{
   author={Hebey, Emmanuel},
   title={Compactness and stability for nonlinear elliptic equations},
   series={Zurich Lectures in Advanced Mathematics},
   publisher={European Mathematical Society (EMS), Z\"urich},
   date={2014},
   pages={x+291},
   isbn={978-3-03719-134-7},
}

\bib{HebeyRob}{article}{
   author={Hebey, Emmanuel},
   author={Robert, Fr\'ed\'eric},
   title={Coercivity and Struwe's compactness for Paneitz type operators
   with constant coefficients},
   journal={Calc. Var. Partial Differential Equations},
   volume={13},
   date={2001},
   number={4},
   pages={491--517},
   issn={0944-2669},
}


\bib{HumPetPre}{article}{
      title={Extremising eigenvalues of the GJMS operators in a fixed conformal class}, 
      author={Humbert, Emmanuel},
      author={Petrides, Romain},
      author={Premoselli, Bruno},
      year={2025},
      note={arXiv: 2505.08280},
}

\bib{HumRau}{article}{
   author={Humbert, Emmanuel},
   author={Raulot, Simon},
   title={Positive mass theorem for the Paneitz-Branson operator},
   journal={Calc. Var. Partial Differential Equations},
   volume={36},
   date={2009},
   number={4},
   pages={525--531},
   issn={0944-2669},
}

\bib{JuhlBook}{book}{
   author={Juhl, Andreas},
   title={Families of conformally covariant differential operators,
   $Q$-curvature and holography},
   series={Progress in Mathematics},
   volume={275},
   publisher={Birkh\"auser Verlag, Basel},
   date={2009},
   pages={xiv+488},
   isbn={978-3-7643-9899-6},
   review={\MR{2521913}},
}

\bib{JuhlGJMS}{article}{
   author={Juhl, Andreas},
   title={Explicit formulas for GJMS-operators and $Q$-curvatures},
   journal={Geom. Funct. Anal.},
   volume={23},
   date={2013},
   number={4},
   pages={1278--1370},
   issn={1016-443X},
   review={\MR{3077914}},
}

\bib{Kallel et el}{article}{
   author={Kallel, Sadok},
   author={Karoui, Rym},
   title={Symmetric joins and weighted barycenters},
   journal={Adv. Nonlinear Stud.},
   volume={11},
   date={2011},
   number={1},
   pages={117--143},
   issn={1536-1365},
}

\bib{LeePar}{article}{
   author={Lee, John M.},
   author={Parker, Thomas H.},
   title={The Yamabe problem},
   journal={Bull. Amer. Math. Soc. (N.S.)},
   volume={17},
   date={1987},
   number={1},
   pages={37--91},
   issn={0273-0979},
}




\bib{LLL}{article}{
   author={Li, Jiayu},
   author={Li, Yuxiang},
   author={Liu, Pan},
   title={The $Q$-curvature on a 4-dimensional Riemannian manifold $(M,g)$
   with $\int_MQdV_g=8\pi^2$},
   journal={Adv. Math.},
   volume={231},
   date={2012},
   number={3-4},
   pages={2194--2223},
   issn={0001-8708},
}

\bib{Lions}{article}{
   author={Lions, Pierre-Louis},
   title={The concentration-compactness principle in the calculus of
   variations. The limit case. II},
   journal={Rev. Mat. Iberoamericana},
   volume={1},
   date={1985},
   number={2},
   pages={45--121},
   issn={0213-2230},
}

\bib{Martin}{article}{
   author={Martin, Robert H., Jr.},
   title={Differential equations on closed subsets of a Banach space},
   journal={Trans. Amer. Math. Soc.},
   volume={179},
   date={1973},
   pages={399--414},
   issn={0002-9947},
}

\bib{MayNdi1}{article}{
   author={Mayer, Martin},
   author={Ndiaye, Cheikh Birahim},
   title={Barycenter technique and the Riemann mapping problem of
   Cherrier-Escobar},
   journal={J. Differential Geom.},
   volume={107},
   date={2017},
   number={3},
   pages={519--560},
   issn={0022-040X},
}

\bib{MayNdi2}{article}{
   author={Mayer, Martin},
   author={Ndiaye, Cheikh Birahim},
   title={Fractional Yamabe problem on locally flat conformal infinities of
   Poincar\'{e}-Einstein manifolds},
   journal={Int. Math. Res. Not. IMRN},
   date={2024},
   number={3},
   pages={2561--2621},
}


\bib{Maz1}{article}{
   author={Mazumdar, Saikat},
   title={GJMS-type operators on a compact Riemannian manifold: best constants and Coron-type solutions},
   journal={J. Differential Equations},
   volume={261},
   date={2016},
   number={9},
   pages={4997--5034},
}

\bib{Maz2}{article}{
   author={Mazumdar, Saikat},
   title={Struwe's decomposition for a polyharmonic operator on a compact
   Riemannian manifold with or without boundary},
   journal={Commun. Pure Appl. Anal.},
   volume={16},
   date={2017},
   number={1},
   pages={311--330},
   issn={1534-0392},
}

\bib{MazPrem}{article}{
      title={Compactness of conformal metrics with constant $Q$-curvature of higher order}, 
      author={Mazumdar, Saikat},
      author={Premoselli, Bruno},
      year={2025},
      note={arXiv: 2510.00888},
}

\bib{MazVetois}{article}{
   author={Mazumdar, Saikat},
   author={V\'etois, J\'er\^ome},
   title={Existence results for the higher-order $Q$-curvature equation},
   journal={Calc. Var. Partial Differential Equations},
   volume={63},
   date={2024},
   number={6},
   pages={Paper No. 151, 29},
}

\bib{Mic}{article}{
   author={Michel, Benoît},
   title={Masse des op\'erateurs GJMS},
   date={2010}, 
   note={arXiv: 1012.4414},
}

\bib{Ndiaye1}{article}{
   author={Ndiaye, Cheikh Birahim},
   title={Constant $Q$-curvature metrics in arbitrary dimension},
   journal={J. Funct. Anal.},
   volume={251},
   date={2007},
   number={1},
   pages={1--58},
   issn={0022-1236},
}

\bib{Ndiaye2}{article}{
   author={Ndiaye, Cheikh Birahim},
   title={Algebraic topological methods for the supercritical $Q$-curvature
   problem},
   journal={Adv. Math.},
   volume={277},
   date={2015},
   pages={56--99},
   issn={0001-8708},,
}

\bib{Ndiaye3}{article}{
   author={Ndiaye, Cheikh Birahim},
   title={Topological methods for the resonant $Q$-curvature problem in
   arbitrary even dimension},
   journal={J. Geom. Phys.},
   volume={140},
   date={2019},
   pages={178--213},
   issn={0393-0440},
}


\bib{NSS}{article}{
   author={Ndiaye, Cheikh Birahim},
   author={Sire, Yannick},
   author={Sun, Liming},
   title={Uniformization theorems: between Yamabe and Paneitz},
   journal={Pacific J. Math.},
   volume={314},
   date={2021},
   number={1},
   pages={115--159},
   issn={0030-8730},
}



%
%

\bib{Paneitz}{article}{
      author={Paneitz, Stephen~M.},
       title={A quartic conformally covariant differential operator for
  arbitrary pseudo-{R}iemannian manifolds (summary)},
        date={2008},
     journal={SIGMA Symmetry Integrability Geom. Methods Appl.},
      volume={4},
       pages={Paper 036, 3},
         url={https://doi.org/10.3842/SIGMA.2008.036},
      review={\MR{2393291}},
}

\bib{Prem13}{article}{
      author={Premoselli, Bruno},
       title={A priori estimates for finite-energy sign-changing blowing-up
  solutions of critical elliptic equations},
        date={2024},
        ISSN={1073-7928,1687-0247},
     journal={Int. Math. Res. Not. IMRN},
      number={6},
       pages={5212\ndash 5273},
}

\bib{QingRas}{article}{
   author={Qing, Jie},
   author={Raske, David},
   title={On positive solutions to semilinear conformally invariant
   equations on locally conformally flat manifolds},
   journal={Int. Math. Res. Not.},
   date={2006},
   pages={Art. ID 94172, 20},
   issn={1073-7928},
}


\bib{RobertGJMS}{article}{
   author={Robert, Fr\'ed\'eric},
   title={Admissible $Q$-curvatures under isometries for the conformal GJMS
   operators},
   conference={
      title={Nonlinear elliptic partial differential equations},
   },
   book={
      series={Contemp. Math.},
      volume={540},
      publisher={Amer. Math. Soc., Providence, RI},
   },
   isbn={978-0-8218-4907-1},
   date={2011},
   pages={241--259},
}


\bib{RobPoly1}{article}{
   author={Robert, Fr\'ed\'eric},
   title={Localization of bubbling for high order nonlinear equations}, 
   year={2024},
   note={arXiv: 2501.00531},
}

\bib{RobPoly2}{article}{
    title={Sharp multiscale control for high order nonlinear equations}, 
    author={Robert, Fr\'ed\'eric},
    year={2025},
    note={arXiv: 2509.07517},
}

\bib{Schoen}{article}{
   author={Schoen, Richard},
   title={Conformal deformation of a Riemannian metric to constant scalar
   curvature},
   journal={J. Differential Geom.},
   volume={20},
   date={1984},
   number={2},
   pages={479--495},
}

\bib{SchoenYau}{article}{
   author={Schoen, Richard},
   author={Yau, Shing Tung},
   title={On the proof of the positive mass conjecture in general
   relativity},
   journal={Comm. Math. Phys.},
   volume={65},
   date={1979},
   number={1},
   pages={45--76},
}

\bib{Struwe}{article}{
   author={Struwe, Michael},
   title={A global compactness result for elliptic boundary value problems
   involving limiting nonlinearities},
   journal={Math. Z.},
   volume={187},
   date={1984},
   number={4},
   pages={511--517},
   issn={0025-5874},
}

\bib{Swan}{article}{
   author={Swanson, Charles A.},
   title={The best Sobolev constant},
   journal={Appl. Anal.},
   volume={47},
   date={1992},
   number={4},
   pages={227--239},
   issn={0003-6811},
}

\bib{WeiXu}{article}{
      author={Wei, Juncheng},
      author={Xu, Xingwang},
       title={Classification of solutions of higher order conformally invariant
  equations},
        date={1999},
        ISSN={0025-5831,1432-1807},
     journal={Math. Ann.},
      volume={313},
      number={2},
       pages={207\ndash 228},
}

\bib{Yamabe}{article}{
   author={Yamabe, Hidehiko},
   title={On a deformation of Riemannian structures on compact manifolds},
   journal={Osaka Math. J.},
   volume={12},
   date={1960},
   pages={21--37},
}
\end{thebibliography}
\end{document}